\newtheorem{thm} {Theorem}[section]
\newtheorem{lem} [thm]{Lemma}
\newtheorem{cor} [thm]{Corollary}
\newtheorem{defi}[thm]{Definition}
\newtheorem{qst} [thm]{Question} 
\newcommand{\rvline}[1]{\hspace*{#1}\vline\hspace*{#1}}
\newcommand{\et}{\mathrm{\acute{e}t}}
\newcommand{\ab}{\mathrm{ab}}
\newcommand{\tor}{\mathrm{tor}}
\newcommand{\red}{\mathrm{red}} 
\newcommand{\piNr}{{\pi^N_1}}
\newcommand{\ba}{\mathbf{a}}
\newcommand{\bb}{\mathbf{b}}
\newcommand{\dual}{\vee}
\newcommand{\cont}{\mathrm{cont}}
\DeclareMathOperator{\codim}{codim}
\DeclareMathOperator{\lcm}{lcm} 
\DeclareMathOperator{\HP}{HP} 
\DeclareMathOperator{\Proj}{Proj}
\DeclareMathOperator{\Sym}{Sym}
\DeclareMathOperator{\Spec}{Spec}
\DeclareMathOperator{\coker}{coker}
\DeclareMathOperator{\init}{in}
\DeclareMathOperator{\NS}{NS} 
\DeclareMathOperator{\Fitt}{Fitt}
\DeclareMathOperator{\NNS}{\mathbf{NS}}
\DeclareMathOperator{\Gr}{\mathbf{Gr}}
\DeclareMathOperator{\Pic}{\mathbf{Pic}}
\DeclareMathOperator{\Jac}{\mathbf{Jac}}
\DeclareMathOperator{\pic}{\mathrm{Pic}}
\DeclareMathOperator{\Cl}{\mathrm{Cl}}
\DeclareMathOperator{\Hilb}{\mathbf{Hilb}}
\DeclareMathOperator{\CDiv}{\mathbf{CDiv}}
\DeclareMathOperator{\LinSys}{\mathbf{LinSys}}
\DeclareMathOperator{\Hom}{Hom}
\DeclareMathOperator{\charac}{char}
\DeclareMathOperator{\imag}{im}
\DeclareMathOperator{\order}{{\#}}
\begin{document}
\author{Hyuk Jun Kweon}
\address{Department of Mathematics, Massachusetts Institute of Technology, Cambridge, MA
02139-4307, USA}
\email{kweon@mit.edu}
\urladdr{https://kweon7182.github.io/}
\date{\today}
\subjclass[2010]{Primary 14C05; Secondary 14C20, 14C22} 
\keywords{N\'eron--Severi group, Castelnuovo–Mumford regularity, Gotzmann number}
\thanks{This research was supported in part by Samsung Scholarship and a grant from the Simons Foundation (\#402472 to Bjorn Poonen, and \#550033).}
\title{Bounds on the Torsion Subgroup Schemes of N\'eron--Severi Group Schemes}

\begin{abstract}
  Let $X \hookrightarrow \mathbb{P}^r$ be a smooth projective variety defined by homogeneous polynomials of degree $\leq d$ over an algebraically closed field. Let $\Pic X$ be the Picard scheme of $X$. Let $\Pic^0 X$ be the identity component of $\Pic X$. The N\'eron--Severi group scheme of $X$ is defined by $\NNS X = (\Pic X)/(\Pic^0 X)_\red$. We give an explicit upper bound on the order of the finite group scheme $(\NNS X)_{\tor}$ in terms of $d$ and $r$. As a corollary, we give an upper bound on the order of the finite group $\pi^1_\et(X,x_0)^\ab_\tor$.
  We also show that the torsion subgroup $(\NS X)_\tor$ of the N\'eron--Severi group of $X$ is generated by less than or equal to $(\deg X -1)(\deg X - 2)$ elements in various situations.
\end{abstract}
\maketitle

\section{Introduction}
In this paper, we work over an algebraically closed base field $k$. Although $\charac k$ is arbitrary, we are mostly interested in the case where $\charac k > 0$. The N\'eron--Severi group $\NS X$ of a smooth projective variety $X$ is the group of divisors modulo algebraic equivalence. Thus, we have an exact sequence
\[ 0 \rightarrow \pic^0 X \rightarrow \pic X \rightarrow \NS X \rightarrow 0. \]

N\'eron \cite[p. 145, Th\'eor\`eme 2]{Ner} and Severi \cite{Sev} proved that $\NS X$ is a finitely generated abelian group. Hence, its torsion subgroup $(\NS X)_\tor$ is a finite abelian group. Poonen, Testa and van Luijk gave an algorithm for computing $(\NS X)_\tor$ \cite[Theorem~8.32]{PTL}. The author gave an explicit upper bound on the order of $(\NS X)_\tor$ \cite[Theorem~4.12]{Kwe}.

As in \cite[7.2]{Jak}, define the N\'eron--Severi group scheme $\NNS X$ of $X$ by the exact sequence
\[ 0 \rightarrow (\Pic^0 X)_\red \rightarrow \Pic X \rightarrow \NNS X \rightarrow 0. \]
If $\charac k = 0$, then $\Pic^0 X$ is an abelian variety, so $\NNS X$ is a disjoint union of copies of $\Spec k$. However, if $\charac k = p > 0$, then $\Pic^0 X$ might not be reduced, and Igusa gave the first example \cite{Igu}. Thus, the N\'eron--Severi group scheme may have additional infinitesimal $p$-power torsion.

The torsion subgroup scheme $(\NNS X)_\tor$ of $\NNS X$ is a finite commutative group scheme. It is a birational invariant for smooth proper varieties due to \cite[p.~92, Proposition~8]{Nor} and \cite[Proposition~3.4]{Ant}. The first main goal of this paper is to give an explicit upper bound on the order of $(\NNS X)_\tor$. Let $\exp_a b \coloneqq a^b$.
\begin{restatable*}{thm}{NNSbound}\label{thm:NNS bound}
  Let $X \hookrightarrow \mathbb{P}^r$ be a smooth connected projective variety defined by homogeneous polynomials of degree $\leq d$. Then
  \[ \order(\NNS X)_{\tor} \leq \exp_2\exp_2\exp_2\exp_d\exp_2(2r+6\log_2 r). \]
\end{restatable*} 

One motivation for studying $(\NNS X)_\tor$ is its relationship with fundamental groups. Recall that if $k = \mathbb{C}$, then 
\begin{align*}
  \pi^1(X,x_0)^\ab_\tor &\simeq H_1(X,\mathbb{Z})_\tor\\
                        &\simeq \Hom\!\left(H^2(X,\mathbb{Z})_\tor,\mathbb{Q}/\mathbb{Z}\right) \\
                        &\simeq \Hom((\NS X)_\tor,\mathbb{Q}/\mathbb{Z}).
\end{align*}
However, if $\charac k = p > 0$, then $\pi^1(X,x_0)^\ab_\tor$ is not determined by $(\NS X)_\tor$. Therefore, an upper bound on $\#(\NS X)_\tor$ does not give an upper bound on $\#\pi^1_\et(X,x_0)^\ab_\tor$. Nevertheless, $\pi^1(X,x_0)^\ab_\tor$ is isomorphic to the group of $k$-points of the Cartier dual of $(\NNS X)_\tor$ \cite[Proposition 69]{Jak}. Hence, we give an upper bound on $\#\pi^1_\et(X,x_0)^\ab_\tor$ as a corollary of \Cref{thm:NNS bound}. As far as the author knows, this is the first explicit upper bound on $\#\pi^1_\et(X,x_0)^\ab_\tor$.

\begin{restatable*}{thm}{piBound}\label{thm:pi bound}
  Let $X \hookrightarrow \mathbb{P}^r$ be a smooth connected projective variety defined by homogeneous polynomials of degree $\leq d$ with base point $x_0 \in X(k)$. Then
  \[ \# \pi^\et_1(X,x_0)^\ab_\tor \leq \exp_2\exp_2\exp_2\exp_d\exp_2(2r+6\log_2 r). \]
\end{restatable*}
\noindent Let $\piNr(X,x_0)$ be the Nori's fundamental group scheme \cite{Nor} of $(X,x_0)$. Then we also give a similar upper bound on $\# \piNr(X,x_0)^\ab_\tor$.

The second main goal of this paper is to give an upper bound on the number of generators of $(\NS X)_\tor$. If $\ell \neq \charac k$, then $(\NS X)[\ell^\infty]$ is generated by at most $(\deg X - 1)(\deg X - 2)$ elements by \cite[Corollary~6.4]{Kwe}. The main tool of this bound is the Lefschetz hyperplane theorem on \'etale fundamental groups \cite[XII. Corollaire 3.5]{SGA2}. Similarly, we prove that the Lefschetz hyperplane theorem on $\Pic^\tau X$ gives an upper bound on the number of generators of $(\NS X)[p^\infty]$. However, the author does not know whether this is true in general. Nevertheless, Langer {\cite[Theorem~11.3]{Lan}} proved that if $X$ has a smooth lifting over $W_2(k)$, then the Lefschetz hyperplane theorem on $\Pic^\tau X$ is true.
This gives a bound on the number of generators of $(\NS X)_\tor$.


\begin{restatable*}{thm}{NSGenLifting}\label{thm:NS Gen Bound}
  If $X\hookrightarrow\mathbb{P}^r$ is the reduction of a smooth connected projective scheme $\mathcal{X}\hookrightarrow\mathbb{P}_{W_2(k)}^r$ over $W_2(k)$, then $(\NS X)_\tor$ is generated by $(\deg X - 1)(\deg X - 2)$ elements.
\end{restatable*}

In \Cref{sec:numerical conditions}, we prove that $\# (\NNS X)_\tor \leq \dim_k \Gamma(\Hilb_Q X,\mathcal{O}_{\Hilb_Q X})$ for some explicit Hilbert polynomial $Q$. \Cref{sec:length bound} bounds $\dim_k \Gamma(Y,\mathcal{O}_Y)$ for an arbitrary projective scheme $Y\hookrightarrow\mathbb{P}^r$ defined by homogeneous polynomials of degree at most $d$. \Cref{sec:hilbert scheme} gives an upper bound on $\#(\NNS X)_\tor$. \Cref{sec:fundamental group} gives an upper bound on $\# \pi_1^\et(X,x_0)^\ab_\tor$ and $\# \piNr(X,x_0)^\ab_\tor$. \Cref{sec:Lefschetz} discusses the Lefschetz-type theorem on $\Pic^\tau X$. \Cref{sec:generator bound} proves that $(\NS X)_\tor$ is generated by less than or equal to $(\deg X-1)(\deg X-2)$ elements if $X$ has a smooth lifting over $W_2(k)$.

\section{Notation}
Given a scheme $X$ over $k$, let $\mathcal{O}_X$ be the structure sheaf of $X$. We sometimes denote $\Gamma(X) = \Gamma(X,\mathcal{O}_X)$. Let $\pic X$ be the Picard group of $X$. If $X$ is integral and locally noetherian, then $\Cl X$ denotes the Weil divisor class group of $X$. If $Y$ is a closed subscheme of $X$, let $\mathscr{I}_{Y \subset X}$ be the sheaf of ideal of $Y$ on $X$. If $X$ is a projective space and there is no confusion, then we may let $\mathscr{I}_Y = \mathscr{I}_{Y \subset X}$. Given a $k$-scheme $T$ and a $k$-algebra $A$, let  $X(T) = \Hom(T,X)$ and $X(A) = \Hom(\Spec A,X)$.

Suppose that $X$ is a projective scheme in $\mathbb{P}^r$. Let $\Hilb_Q X$ be the Hilbert scheme of $X$ corresponding to a Hilbert polynomial $Q$. Let $\Hilb^P X$ be the Hilbert scheme of $X$ parametrizing closed subschemes $Z$ of $X$ such that $\mathscr{I}_Z$ has Hilbert polynomial $P$. In particular,
\[\Hilb^{P(n)} X = \Hilb_{\binom{n+r}{r}-P(n)} X.\]
Now, suppose that $X$ is smooth. Let $\CDiv_Q X$ be the subscheme of $\Hilb_Q X$ parametrizing divisors with Hilbert polynomial $Q$. 
If $D \subset \mathbb{P}^r$ is an effective divisor on $X$, let $\HP_D$ be the Hilbert polynomial of $D$ as a subscheme.

Let $\Pic X$ be the Picard scheme of $X$. Let $\Pic^0 X$ be the identity component of $\Pic^0 X$. Let $\Pic^\tau X$ be the disjoint union of the connected components of $\Pic X$ corresponding to $(\NS X)_\tor$. Let $\pic^0 X = (\Pic^0 X)(k)$ and $\pic^\tau X = (\Pic^\tau X)(k)$. Let $\NNS X = \Pic X/(\Pic^0 X)_\red$, and let $\NS X = \pic X/\pic^0 X$. Given $x_0 \in X(k)$, let $\pi^\et_1(X,x_0)$ and $\piNr(X,x_0)$ be the \'etale fundamental group and Nori's fundamental group scheme \cite{Nor} of $(X,x_0)$, respectively. Given a vector space $V$, let $\Gr(n,V)$ be the Grassmannian parametrizing $n$-dimensional linear subspaces of $V$.

Let $S$ be a separated $k$-scheme of finite type, and let $X$ be a $S$-scheme, with the structure map $f\colon X \rightarrow S$. Then given a point $t \in S$, let $X_t$ be the fiber over $t$. Let $\mathscr{F}$ be a quasi-coherent sheaf on $X$, then let $\mathscr{F}_t$ be the sheaf on $X_t$ which is the fiber of $\mathscr{F}$ over $t$. Let $g\colon T \rightarrow S$ be a morphism. Let $p_1\colon X\times_S T \rightarrow X$ and $p_2\colon X\times_S T \rightarrow T$ be the projections. Then as in as in \cite[Definition 9.3.12]{Kle2},  for an invertible sheaf $\mathscr{L}$ on $X$, let $\LinSys_{\mathscr{L}/X/S}$ be the scheme given by
  \begin{align*}
    \LinSys_{\mathscr{L}/X/S}(T) = \big\{ D \,\big|\,
    &\text{$D$ is a relative effective divisor on $X_T/T$ such that}\\
    &\text{$\mathcal{O}_{X_T}(D) \simeq p_1^*\mathscr{L}\otimes p_2^*\mathscr{N}$ for some invertible sheaf $\mathscr{N}$ on $T$}\big\}.
  \end{align*}

  Given a set $S$ and $T$, let $S \amalg T$ be the disjoint union of $S$ and $T$. Let $\# S$ be the cardinality of $S$. Let $\mathbb{N}$ be the set of nonnegative integers, and $\mathbb{N}_{< n}$ be the set of nonnegative integers strictly less than $n$. We regard an $n$-tuple $\ba$ on $S$ as a function $\ba\colon \mathbb{N}_{< n} \rightarrow S$. Thus, the $i$th component of $\ba$ is denoted by $\ba(i)$. 

Given a group $G$, let $G^\ab$ be the abelianization of $G$. Suppose that $G$ is abelian. Then let $G_\tor$, $G[n]$ and $G[p^\infty]$ be the torsion subgroup, $n$-torsion subgroup and $p$-power torsion subgroup of $G$, respectively. For a finite abelian group $G$, let $G^\dual$ equals $\Hom(G,\mathbb{Q}/\mathbb{Z})$.

Given a commutative group scheme $G$,
let $G_\tor$, $G[n]$ and $G[p^\infty]$ be the torsion subgroup scheme, $n$-torsion subgroup scheme, and $p$-power torsion subgroup scheme of $G$, respectively. Suppose that $G$ is finite. Then let $\order G$ be the order of $G$, let $G^\ab$ be the abelianization of $G$, and let $G^\dual$ be the Cartier dual of $G$.

Given a finite module $M$, let $\Fitt_i(M)$ be the $i$th fitting ideal of $M$ \cite[Proposition 20.4]{Eis2}. If $M$ is a graded module, its degree $t$ part is denoted as $M_t$. If $M$ is a finite $k[x_0,x_1,\dots,x_r]$-module, then let $\widetilde{M}$ be the coherent sheaf on $\mathbb{P}^r$ associated to $M$. Given a monomial order $\preceq$ and an ideal $I \subset k[x_0,x_1,\dots,x_r]$, let $\init_\preceq(I)$ be the initial ideal of $I$ with respect to $\preceq$. We write an $m\times n$ matrix as
\[(a_{i,j})_{i<m,j<n} \coloneqq  \begin{pmatrix}
  a_{0,0} & a_{0,1} & \dots & a_{0,n-1}\\ 
  a_{1,0} & a_{1,1} & \dots & a_{1,n-1}\\ 
  \vdots & \vdots & \ddots & \vdots\\ 
  a_{m-1,0} & a_{m-1,1} & \dots & a_{m-1,n-1}
\end{pmatrix}.\]
Let $A$ and $B$ be are commutative rings, and $f\colon A \rightarrow B$ be a homomorphism. Let $M = (a_{i,j})_{i<m,j<n} \in A^{m \times n}$. Let $\imag M$ be the $A$-module generated by the columns of $M$, and let
\[f(M) \coloneqq (\varphi(a_{i,j}))_{i<m,j<n}. \]
Given a linear map $\mu\colon A^m \rightarrow A^l$, let $\mu^{\oplus n}(M)$ be the $l\times n$ matrix obtained by applying $\mu$ to every column vector of $M$.

\section{Numerical Conditions}\label{sec:numerical conditions}
Let $\imath\colon X \hookrightarrow \mathbb{P}^r$ be a closed embedding of a smooth connected projective variety $X$. Let $H \subset X$ be a hyperplane section of $X$. The goal of this section is to describe an integer $m$ such that
\begin{equation}\label{eqn:bound NS tor by Gamma Hilb}
  \order (\NNS X)_\tor \leq \dim_k \Gamma\left(\Hilb_{\HP_{mH}} X\right).
\end{equation}
For a positive integer $m$, consider the diagram
\[ \Hilb_{\HP_{mH}} \hookleftarrow \CDiv_{\HP_{mH}} X \rightarrow \Pic^\tau X \twoheadrightarrow (\NNS X)_\tor, \]
in which the middle morphism is yet to be defined. The natural embedding $\CDiv_{\HP_{mH}} X \hookrightarrow \Hilb_{\HP_{mH}} X$ is open and closed \cite[Theorem~1.13]{Kol}. The natural quotient $\Pic^\tau X \twoheadrightarrow (\NNS X)_\tor$ is faithfully flat \cite[VI\textsubscript{A}, Th\'eor\`eme 3.2]{SGA3I}. Suppose that there is a faithfully flat morphism $\CDiv_{\HP_{mH}} X \rightarrow \Pic^\tau X$ for some large $m$. Then $\Gamma((\NNS X)_\tor) \hookrightarrow \Gamma(\CDiv_{\HP_{mH}})$ is injective by \cite[Corollaire~2.2.8]{EGA4}, and $\Gamma(\Hilb_{\HP_{mH}}) \twoheadrightarrow \Gamma(\CDiv_{\HP_{mH}})$ is surjective, so we get (\ref{eqn:bound NS tor by Gamma Hilb}). Such $m$ will be obtained as a byproduct of the construction of Picard schemes. The theorem and the proof below are a modification of the proof of \cite[Theorem~9.4.8]{Kle2}.

\begin{thm}\label{thm:EffDiv quotient}
  Let $m$ be an integer such that
  \begin{equation}\label{eqn:condition for m}
    H^1(X,\mathscr{L}(m)) = 0
  \end{equation}
  for every numerically zero invertible sheaf $\mathscr{L}$. Then the morphism given by
  \begin{align*}
    (\CDiv_{\HP_{mH}} X)(T) &\rightarrow (\Pic^\tau X)(T)\\
    D &\mapsto \mathcal{O}_{X_T}(D-mH_T)
  \end{align*}
  is faithfully flat.
\end{thm}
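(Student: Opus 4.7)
The plan is to factor $\varphi\colon \CDiv_{\HP_{mH}} X \to \Pic^\tau X$, after an fppf base change $U \to \Pic^\tau X$ on which a universal sheaf exists, as an isomorphism $\CDiv_{\HP_{mH}} X \times_{\Pic^\tau X} U \simeq \mathbb{P}(\mathscr{E})$ followed by the projective-bundle structure map $\mathbb{P}(\mathscr{E}) \to U$; faithful flatness of $\varphi$ then follows by fppf descent. Here $\mathscr{E} \coloneqq p_{2*}(\mathscr{P}\otimes p_1^*\mathcal{O}(m))$, where $\mathscr{P}$ is the universal sheaf on $X\times U$ and $p_1,p_2$ are the projections.

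The first substep is to establish that $\mathscr{E}$ is locally free of a constant rank. For each $u \in U$ the fiber $\mathscr{L}\coloneqq \mathscr{P}_u$ projects to a class in $(\NS X)_\tor$ and is therefore numerically trivial, so by Snapper's lemma $\chi(X,\mathscr{L}(m))$ equals $\chi(X,\mathcal{O}(m))=:N$, independently of $u$. Combined with the hypothesis $H^1(X,\mathscr{L}(m)) = 0$, this yields $h^0(X,\mathscr{L}(m))=N$ constant. Cohomology and base change then forces $\mathscr{E}$ to be locally free of rank $N$ with formation commuting with arbitrary base change, so $\mathbb{P}(\mathscr{E})\to U$ is a Zariski-locally-trivial $\mathbb{P}^{N-1}$-bundle, in particular faithfully flat.

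The second substep identifies $\CDiv_{\HP_{mH}} X \times_{\Pic^\tau X} U$ with $\mathbb{P}(\mathscr{E})$ as functors on $U$-schemes $T$. A $T$-point of $\mathbb{P}(\mathscr{E})$ is a line-bundle quotient $\mathscr{E}_T\twoheadrightarrow \mathscr{N}$, which by adjunction is a section $s$ of $p_1^*\mathscr{P}(m)\otimes p_2^*\mathscr{N}^{-1}$ on $X_T$ that is fiberwise nonzero; its zero locus $D=V(s)$ is a relative effective Cartier divisor with $\mathcal{O}_{X_T}(D-mH_T)\simeq p_1^*\mathscr{P}\otimes p_2^*\mathscr{N}^{-1}$, providing a $T$-point of the fiber product. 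Conversely, any such divisor arises from a section, unique up to units on $T$. Fiberwise, $D$ is linearly equivalent to $\mathscr{L}(m)$ with $\mathscr{L}$ numerically trivial, hence numerically equivalent to $mH$; the sequence $0\to \mathcal{O}_X(-D)(n)\to \mathcal{O}_X(n)\to \mathcal{O}_D(n)\to 0$ together with the invariance of $\chi$ under numerical equivalence then gives $\HP_D=\HP_{mH}$, so $D$ lands in $\CDiv_{\HP_{mH}} X$.

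Once the identification is in hand, $\mathbb{P}(\mathscr{E})\to U$ is faithfully flat, $U\to \Pic^\tau X$ is fppf by construction, and fppf descent forces $\varphi$ itself to be faithfully flat. The main obstacle is the second substep: one must carefully match the projective-bundle side with the divisor side on $T$-points, accounting for the fact that $\mathscr{P}$ is only canonical up to tensoring by the pullback of a line bundle from $U$ (which the projective bundle absorbs into the choice of $\mathscr{N}$), and for the fact that a section must give a divisor of exactly the prescribed Hilbert polynomial rather than a proper subscheme. Both issues are handled by Snapper's lemma and the definition of $\Pic^\tau X$, following the template of \cite[Theorem~9.4.8]{Kle2}.
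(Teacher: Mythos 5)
Your proof takes essentially the same route as the paper's: both identify $\CDiv_{\HP_{mH}} X$ with the projectivization of the pushforward of the $m$-twisted universal sheaf over the relevant union of components of $\Pic X$, and conclude from faithful flatness of a projective bundle; the paper simply delegates your second substep to \cite[Theorem~9.3.10]{Kle2} and works over $\Pic^\sigma X$ (the components with the Hilbert polynomial of $\mathcal{O}_X(m)$) instead of passing to an fppf cover $U$ --- a base change that is unnecessary here, since $k$ is algebraically closed and $X$ has a rational point, so the universal sheaf already exists on $X\times\Pic X$. One small inaccuracy in your first substep: $\chi(X,\mathscr{L}(m))=N$ together with $H^1(X,\mathscr{L}(m))=0$ does not yield $h^0(X,\mathscr{L}(m))=N$, since higher cohomology may survive; this is harmless, because cohomology and base change gives local freeness of $\mathscr{E}$ (with formation commuting with base change) from the fibrewise $H^1$-vanishing alone, and constancy of the rank is not needed for $\mathbb{P}(\mathscr{E})\to U$ to be flat.
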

\begin{proof}
  Let $\Pic^\sigma X$ be the union of the connected components of $\Pic X$ parametrizing invertible sheaves having the same Hilbert polynomial as $\mathcal{O}_X(m)$. Then for any $k$-scheme $T$,
  \begin{align*}
    (\Pic^\sigma X)(T) &\rightarrow (\Pic^\tau X)(T)\\
    \mathscr{L} &\mapsto \mathscr{L}(-m)
  \end{align*}
  gives an isomorphism. Because of the cycle map
  \begin{align*}
    (\CDiv_{\HP_{mH}} X)(T) &\rightarrow (\Pic^\sigma X)(T)\\
    D &\mapsto \mathcal{O}_{X_T}(D),
  \end{align*}
  $\CDiv_{\HP_{mH}} X$ can be regarded as a $(\Pic^\sigma X)$-scheme. The identity $\Pic^\sigma X \rightarrow \Pic^\sigma X$ gives a $(\Pic^\sigma X)$-point of $\Pic^\sigma X$, and it is represented by an invertible sheaf $\mathscr{L}$ on $X_{\Pic^\sigma X}$. Then $\CDiv_{\HP_{mH}} X$ is naturally isomorphic to $\LinSys_{\mathscr{L}/X \times \Pic^\sigma X/\Pic^\sigma X}$ as $(\Pic^\sigma X)$-schemes by definition. For every closed point $t \in \Pic^\sigma X$, $\mathscr{L}_t(-m)$ is numerically zero, so
    \[ H^1(X\times\{t\},\mathscr{L}_t) = 0.\]
    Hence, \cite[9.3.10]{Kle2} implies that $\LinSys_{\mathscr{L}/X \times \Pic^\sigma X/\Pic^\sigma X} \simeq \mathbb{P}(\mathcal{Q})$ for some locally free sheaf $\mathcal{Q}$ on $\Pic^\sigma X$. Since $\mathbb{P}(\mathcal{Q}) \rightarrow \Pic^\sigma X$ is faithfully flat, $\CDiv_{\HP_{mH}} X \rightarrow \Pic^\tau X$ is also faithfully flat.
\end{proof}

We now need to find an integer $m$ satisfying (\ref{eqn:condition for m}). Recall the definition of Castelnuovo–Mumford regularity.
\begin{defi}
  A coherent sheaf $\mathscr{F}$ on $\mathbb{P}^r$ is $m$-regular if and only if
  \[ H^i\left(\mathbb{P}^r, \mathscr{F}(m-i)\right) = 0 \]
  for every integer $i > 0$. The smallest such $m$ is called the Castelnuovo–Mumford regularity of $\mathscr{F}$.
\end{defi}

Mumford \cite[p. 99]{Mum} showed that if $\mathscr{F}$ is $m$-regular, then it is also $t$-regular for every $t \geq m$.

\begin{defi}
  Let $P$ be the Hilbert polynomial of some homogeneous ideal $I \subset k[x_0,\dots,x_r]$. The Gotzmann number $\varphi(P)$ of $P$ is defined as 
  \begin{align*}
    \varphi(P) = \inf \{ m \,|\,& \mathscr{I}_Z  \text{ is $m$-regular for every} \\
        &\text{closed subvariety $Z \subset \mathbb{P}^r$ with Hilbert polynomial $P$} \}.
  \end{align*}
  If $Z \subset \mathbb{P}^r$ is a projective scheme, then let $\varphi(Z)$ be the Gotzmann number of the Hilbert polynomial of $\mathscr{I}_Z$. 
\end{defi}

Given a homogeneous ideal $I \subset k[x_0,\dots,x_r]$, Hoa gave an explicit finite upper bound on $\varphi(\HP_I)$ {\cite[Theorem~6.4(i)]{Hoa}}. We will describe $m$ in terms of Gotzmann numbers of Hilbert polynomials.

\begin{lem}\label{lem:condition for effectiveness}
  Let $\mathscr{L}$ be a numerically zero invertible sheaf on $X$. Then $\mathscr{L}((d-1)\codim X)$ is generated by global sections.
\end{lem}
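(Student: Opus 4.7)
The plan is to deduce the lemma from a Castelnuovo--Mumford regularity statement: I aim to show that $\imath_*\mathscr{L}$ is $((d-1)\codim X)$-regular as a coherent sheaf on $\mathbb{P}^r$. Mumford's theorem (an $m$-regular sheaf $\mathscr{F}$ on $\mathbb{P}^r$ has $\mathscr{F}(m)$ generated by global sections) then gives that $\mathscr{L}((d-1)\codim X)$ is generated by global sections on $X$.

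I would establish this regularity bound by induction on $n := \dim X$. The base case $n=0$ is immediate: $X$ is a single reduced $k$-point, so $\mathscr{L}$ is trivial and any twist is globally generated. For the inductive step, take a general hyperplane $H'\subset\mathbb{P}^r$. By Bertini (valid in arbitrary characteristic since $X$ is smooth and $|\mathcal{O}_{\mathbb{P}^r}(1)|$ is basepoint free), $H:=X\cap H'$ is smooth of dimension $n-1$, has the same codimension $c:=\codim X$ in $H'\simeq\mathbb{P}^{r-1}$, and is cut out there by polynomials of degree $\leq d$ (restrictions of those defining $X$). Since $\mathscr{L}|_H$ is numerically zero, the inductive hypothesis (applied componentwise if $H$ is disconnected, which happens only when $n=1$) gives that $\imath_{H,*}(\mathscr{L}|_H)$ is $((d-1)c)$-regular on $\mathbb{P}^{r-1}$.

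Combining this with the short exact sequence
\[ 0 \to \imath_*\mathscr{L}(-1) \to \imath_*\mathscr{L} \to \imath_{H,*}(\mathscr{L}|_H) \to 0 \]
(whose first arrow is multiplication by the linear form defining $H'$, injective for general $H'$ since $X$ is reduced) and Mumford's standard inductive criterion for propagating regularity from a hyperplane restriction reduces the regularity of $\imath_*\mathscr{L}$ on $\mathbb{P}^r$ to the single cohomological vanishing
\[ H^1(X,\mathscr{L}((d-1)c-1))=0. \]

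The main obstacle is this $H^1$ vanishing in arbitrary characteristic, since in positive characteristic Kodaira--Kawamata--Viehweg type vanishing can fail. To establish it I would choose $c$ polynomials $g_1,\ldots,g_c$ of degree $\leq d$ in the homogeneous ideal of $X$ forming a regular sequence in $k[x_0,\ldots,x_r]$ (available by prime avoidance since $X$ has codimension exactly $c$), cutting out a complete intersection $Y\supseteq X$. The Koszul resolution of $\mathcal{O}_Y$ over $\mathcal{O}_{\mathbb{P}^r}$ directly yields $((d-1)c)$-regularity of $\mathcal{O}_Y$. The final, most delicate step is to transfer this vanishing to the twist by $\mathscr{L}$ on $X$, using the exact sequence $0\to\mathcal{I}_{X/Y}\to\mathcal{O}_Y\to\imath_*\mathcal{O}_X\to 0$ together with the equality of Hilbert polynomials $\chi(X,\mathscr{L}(t))=\chi(X,\mathcal{O}_X(t))$ coming from numerical triviality.
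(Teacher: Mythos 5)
Your reduction is fine as far as it goes: Mumford's theorem, the hyperplane-section induction, and the observation that (given $m$-regularity of the restriction to a general hyperplane) the only remaining obstruction to $m$-regularity of $\imath_*\mathscr{L}$ is the single vanishing $H^1\bigl(X,\mathscr{L}((d-1)\codim X-1)\bigr)=0$ are all correct, and this correctly isolates where the difficulty lives. The gap is that your argument for that vanishing does not work. The equality of Euler characteristics $\chi(X,\mathscr{L}(t))=\chi(X,\mathcal{O}_X(t))$ controls only the alternating sum of the $h^i$, never an individual $h^1$; and in positive characteristic --- the case this paper is actually about --- the individual $h^i(X,\mathscr{L}(t))$ of numerically trivial twists are genuinely not constant in $\mathscr{L}$ (this is exactly the Igusa-type pathology of non-reduced $\Pic^0$ that motivates the paper), so there is no way to transfer a vanishing from $\mathcal{O}_X$ to $\mathscr{L}$ by this route. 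Moreover, even for $\mathscr{L}=\mathcal{O}_X$, passing from the Koszul-complex vanishing on the complete intersection $Y$ to $H^1(X,\mathcal{O}_X(t))=0$ through $0\to\mathscr{I}_{X\subset Y}\to\mathcal{O}_Y\to\imath_*\mathcal{O}_X\to 0$ requires $H^2(Y,\mathscr{I}_{X\subset Y}(t))=0$, which you never establish; that statement at the twist $t=(d-1)\codim X-1$ is essentially the Bertram--Ein--Lazarsfeld regularity bound, whose known proofs rely on Kodaira/Kawamata--Viehweg-type vanishing and are unavailable in characteristic $p$.

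There is also a structural reason to distrust the strategy. You are aiming at $((d-1)\codim X)$-regularity of $\imath_*\mathscr{L}$, which is far stronger than the global generation the lemma asserts, and the paper's own architecture shows that the required $H^1$-vanishing is not expected at such a small twist: \Cref{lem:eqn 2} obtains $H^i(X,\mathscr{L}(m-n))=0$ only for $m$ at least a Gotzmann number (doubly exponential in $r$), and it does so by first invoking the present lemma to realize $\mathscr{L}^{-1}(n)$ as $\mathcal{O}_X(Z)$ for an effective divisor $Z$ and then applying regularity bounds to $\mathscr{I}_Z$. Your proposal inverts this logic, demanding the hard vanishing (at a vastly better twist) as an input to the global-generation statement. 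The paper itself gives no internal proof --- it cites \cite[Lemma~3.5~(a)]{Kwe} --- and the argument there produces sections of $\mathscr{L}((d-1)\codim X)$ through a given point by a direct geometric construction rather than by a Castelnuovo--Mumford regularity computation; some such mechanism is what your proof is missing.
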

\begin{proof}
  See \cite[Lemma~3.5 (a)]{Kwe}.
\end{proof}

\begin{lem}\label{lem:eqn 2}
  Let $\mathscr{L}$ be a numerically zero invertible sheaf on $X$. Then for every $i \geq 1$, $n \geq (d-1)\codim X$ and $m \geq \max\{ \varphi(nH), \varphi(X)\}$, we have
  \[ H^i(X,\mathscr{L}(m-n)) = 0. \]
\end{lem}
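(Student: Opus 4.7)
My plan is to bound $H^i(X,\mathscr{L}(m-n))$ by realizing it inside the cohomology of a standard short exact sequence of ideal sheaves on $\mathbb{P}^r$, and then invoking the Castelnuovo--Mumford regularities of $\mathscr{I}_X$ and of the ideal sheaf of a carefully chosen effective divisor $D'$ on $X$ representing $\mathscr{L}^{-1}(n)$.

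First, I would apply \Cref{lem:condition for effectiveness} to $\mathscr{L}^{-1}$, which is again numerically zero, to conclude that $\mathscr{L}^{-1}((d-1)\codim X)$ is globally generated. Hence for $n \geq (d-1)\codim X$ there exists an effective Cartier divisor $D'$ on $X$ with $\mathcal{O}_X(D') \simeq \mathscr{L}^{-1}(n)$, equivalently $\mathcal{O}_X(-D') \simeq \mathscr{L}(-n)$. Because $\mathscr{L}$ is numerically zero, $D' \equiv nH$, so the short exact sequence $0 \to \mathcal{O}_X(-D') \to \mathcal{O}_X \to \mathcal{O}_{D'} \to 0$ twisted by $\mathcal{O}_X(t)$ gives $\HP_{D'} = \HP_{nH}$. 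In particular $\mathscr{I}_{D' \subset \mathbb{P}^r}$ has the same Hilbert polynomial as $\mathscr{I}_{nH \subset \mathbb{P}^r}$, so by definition of the Gotzmann number it is $\varphi(nH)$-regular.

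Next, the flag $D' \subset X \subset \mathbb{P}^r$ induces on $\mathbb{P}^r$ the short exact sequence
\[ 0 \to \mathscr{I}_X \to \mathscr{I}_{D'} \to \imath_*\mathcal{O}_X(-D') \to 0. \]
Twisting by $\mathcal{O}_{\mathbb{P}^r}(m)$ and identifying $\mathcal{O}_X(-D')(m) \simeq \mathscr{L}(m-n)$ by the projection formula, a fragment of the long exact cohomology sequence reads
\[ H^i(\mathbb{P}^r, \mathscr{I}_{D'}(m)) \to H^i(X, \mathscr{L}(m-n)) \to H^{i+1}(\mathbb{P}^r, \mathscr{I}_X(m)). \]
For any $i \geq 1$, the hypothesis $m \geq \varphi(nH)$ together with Mumford's theorem on the upward preservation of regularity forces the left term to vanish, and $m \geq \varphi(X)$ similarly kills the right term. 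The desired vanishing $H^i(X,\mathscr{L}(m-n)) = 0$ then follows from the exact sequence.

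The one substantive choice is to work with $\mathscr{L}^{-1}(n)$ rather than $\mathscr{L}(n)$: it is precisely this sign that makes the quotient in the ideal sequence, after twisting by $\mathcal{O}(m)$, equal $\imath_*\mathscr{L}(m-n)$ rather than a sheaf with an extra twist by $\mathcal{O}(-n)$. Once $D'$ is chosen this way, the remaining steps---matching $\HP_{D'}$ with $\HP_{nH}$ and invoking Mumford's theorem---are routine, so I expect no serious obstacle.
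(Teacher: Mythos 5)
Your proposal is correct and follows essentially the same route as the paper: produce an effective divisor $D'$ with $\mathcal{O}_X(-D')\simeq\mathscr{L}(-n)$ via \Cref{lem:condition for effectiveness}, then kill $H^i(X,\mathscr{L}(m-n))$ using the long exact sequence of $0\to\mathscr{I}_X\to\mathscr{I}_{D'}\to\imath_*\mathcal{O}_X(-D')\to 0$ together with the $m$-regularity of $\mathscr{I}_{D'}$ and $\mathscr{I}_X$. You merely make explicit two steps the paper leaves implicit, namely applying the global-generation lemma to $\mathscr{L}^{-1}$ and matching $\HP_{D'}$ with $\HP_{nH}$ to invoke the Gotzmann number.
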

\begin{proof}
  There is an effective divisor $Z$ on $X$ such that $\mathscr{I}_{Z\subset X} = \mathscr{L}(-n)$ by \Cref{lem:condition for effectiveness}. Since $\mathscr{I}_Z$ and $\mathscr{I}_X$ are $m$-regular, $H^i(\mathbb{P}^r,\mathscr{I}_Z(m)) = 0$ and $H^i(\mathbb{P}^r,\mathscr{I}_X(m)) = 0$ for all $i\geq 1$. Consider the exact sequence
  \[ 0 \rightarrow \mathscr{I}_X \rightarrow \mathscr{I}_Z \rightarrow \imath_* \mathscr{I}_{Z\subset X} \rightarrow 0. \]
  Then its long exact sequence shows that
  \[ H^i(X,\mathscr{L}(m-n)) = H^i(\mathbb{P}^r,\imath_* \mathscr{I}_{Z\subset X}(m)) = 0\]
  for every $i \geq 1$.
\end{proof}

We are ready to prove the main theorem of this section.
\begin{thm}\label{thm:bound NS by Hilb}
  Assume that $n \geq (d-1)\codim X$ and $m \geq \max\{ \varphi({nH}), \varphi(X)\}$. Then
  \[\order (\NNS X)_\tor \leq 
    \dim_k \Gamma\left(\Hilb_{\HP_{mH}} X\right). \]
\end{thm}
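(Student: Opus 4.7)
The plan is to chase global sections through the diagram previewed just before Theorem~\ref{thm:EffDiv quotient}:
\[\Hilb_{\HP_{mH}} X \;\hookleftarrow\; \CDiv_{\HP_{mH}} X \;\longrightarrow\; \Pic^\tau X \;\twoheadrightarrow\; (\NNS X)_\tor.\]
The outer two morphisms are already known to behave well (an open-and-closed embedding on the left by \cite[Theorem~1.13]{Kol}, and a faithfully flat quotient on the right by \cite[VI\textsubscript{A}, Th\'eor\`eme 3.2]{SGA3I}), so I only need to activate the middle arrow by invoking Theorem~\ref{thm:EffDiv quotient}, whose sole input is the cohomology vanishing $H^1(X,\mathscr{L}(m))=0$ for every numerically zero invertible sheaf $\mathscr{L}$.

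The first step, and essentially the only one with any bookkeeping, is to deduce this vanishing from the stated hypotheses $n\geq (d-1)\codim X$ and $m\geq \max\{\varphi(nH),\varphi(X)\}$. For this I would apply Lemma~\ref{lem:eqn 2} with its ``$m$'' parameter taken to be $m+n$ and its ``$n$'' kept as $n$; the inequality $m+n\geq m\geq \max\{\varphi(nH),\varphi(X)\}$ holds automatically, so the lemma outputs $H^1(X,\mathscr{L}((m+n)-n))=H^1(X,\mathscr{L}(m))=0$. This is precisely the hypothesis of Theorem~\ref{thm:EffDiv quotient}, which therefore makes $\CDiv_{\HP_{mH}} X \to \Pic^\tau X$ faithfully flat.

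To finish, I would compose this map with the faithfully flat quotient $\Pic^\tau X \twoheadrightarrow (\NNS X)_\tor$ to obtain a faithfully flat morphism $\CDiv_{\HP_{mH}} X \to (\NNS X)_\tor$. By \cite[Corollaire~2.2.8]{EGA4}, faithful flatness yields an injection $\Gamma((\NNS X)_\tor)\hookrightarrow \Gamma(\CDiv_{\HP_{mH}} X)$, while the open-and-closed embedding $\CDiv_{\HP_{mH}} X \hookrightarrow \Hilb_{\HP_{mH}} X$ yields a surjection $\Gamma(\Hilb_{\HP_{mH}} X)\twoheadrightarrow \Gamma(\CDiv_{\HP_{mH}} X)$. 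Chaining these gives $\dim_k \Gamma((\NNS X)_\tor) \leq \dim_k \Gamma(\Hilb_{\HP_{mH}} X)$, and since $(\NNS X)_\tor$ is a finite commutative $k$-group scheme, its order equals $\dim_k \Gamma((\NNS X)_\tor)$, which is the desired bound. The only real obstacle is aligning the index arithmetic of Lemma~\ref{lem:eqn 2} with the hypothesis of Theorem~\ref{thm:EffDiv quotient}; once that reindexing is in place, everything reduces to the diagram chase sketched above.
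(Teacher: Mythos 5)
Your proof is correct and takes essentially the same route as the paper's: \Cref{lem:eqn 2} feeds the vanishing hypothesis of \Cref{thm:EffDiv quotient} (your shift of the lemma's $m$ to $m+n$, legitimate since $n\geq 0$ makes $m+n\geq\max\{\varphi(nH),\varphi(X)\}$, is exactly the reindexing the paper leaves implicit), and the rest is the identical section-chase through the faithfully flat composite $\CDiv_{\HP_{mH}}X\to(\NNS X)_\tor$ via \cite[Corollaire~2.2.8]{EGA4} and the open-and-closed embedding into the Hilbert scheme.
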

\begin{proof}
  The quotient $\Pic^\tau X \rightarrow (\NNS X)_\tor$ is faithfully flat. \Cref{lem:eqn 2} and \Cref{thm:EffDiv quotient} give a faithfully flat morphism $\CDiv_{\HP_{mH}} X \rightarrow \Pic^\tau X$. As a result, their composition gives an injection 
  \[ \Gamma((\NNS X)_\tor) \hookrightarrow \Gamma\left(\CDiv_{\HP_{mH}} X\right) \]
  by \cite[Corollaire 2.2.8]{EGA4}. Recall that $\CDiv_{\HP_{mH}} X$ is an open and closed subscheme of $\Hilb_{\HP_{mH}} X$. Consequently,
  \begin{align*}
    \order (\NNS X)_\tor &= \dim_k \Gamma(\NNS X)_\tor \\
                         &\leq \dim_k \Gamma\left(\CDiv_{\HP_{mH}} X\right) \\
                         &\leq \dim_k \Gamma\left(\Hilb_{\HP_{mH}} X\right). \qedhere
  \end{align*}
\end{proof}

Therefore, if we bound $\dim_k \Gamma(Y,\mathcal{O}_Y)$ for a general projective scheme $Y$, and explicitly describe Hilbert schemes in projective spaces, then we can bound $\order (\NNS X)_\tor$.

\section{The Dimension of \texorpdfstring{$\Gamma(Y,\mathcal{O}_Y)$}{Γ(Y,O\_Y)}}\label{sec:length bound}
Let $Y\hookrightarrow\mathbb{P}^r$ be a projective scheme defined by a homogeneous ideal $I$ generated by homogeneous polynomials of degree at most $d$. The aim of the section is to give an upper bound on $\dim_k \Gamma(Y,\mathcal{O}_Y)$ in terms of $r$ and $d$. First, we reduce to the case where $I$ is a monomial ideal. 

\begin{lem}\label{lem:bounded by init}
  Let $\preceq$ be a monomial order. Let $Y_0\hookrightarrow\mathbb{P}^r$ be a projective scheme defined by $\init_\preceq(I)$. Then
  \[\dim_k \Gamma(Y,\mathcal{O}_Y) \leq \dim_k\Gamma(Y_0,\mathcal{O}_{Y_0}).\]
\end{lem}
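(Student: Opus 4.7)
The plan is a Gr\"obner degeneration followed by upper semicontinuity of $h^0$. Let $S = k[x_0,\dots,x_r]$. I would build a flat projective family $\mathcal{Y}\to\mathbb{A}^1$ whose fiber over $0$ is $Y_0$ and whose fiber over $1$ is (a copy of) $Y$, and then compare $\dim\Gamma$ of the two fibers via semicontinuity.

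To construct $\mathcal{Y}$, first pick a Gr\"obner basis $g_1,\dots,g_s$ of $I$ with respect to $\preceq$, and choose an integer weight $w\in\mathbb{Z}_{\geq 0}^{r+1}$ whose $w$-leading term on each $g_i$ coincides with its $\preceq$-leading term; such a $w$ exists by the standard fact that any monomial order can be refined by an integer weight on a prescribed finite set of polynomials. Then $\init_w(I)=\init_\preceq(I)$. For each $g_i$ let $d_i$ be the maximal $w$-weight of a monomial appearing in $g_i$, and set
$$\tilde g_i(x,t) \;=\; t^{d_i}\, g_i(t^{-w_0}x_0,\dots,t^{-w_r}x_r)\;\in\;S[t],$$
which is $x$-homogeneous of the same $x$-degree as $g_i$; by construction $\tilde g_i|_{t=1}=g_i$ and $\tilde g_i|_{t=0}=\init_\preceq(g_i)$. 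Let $\tilde I=(\tilde g_1,\dots,\tilde g_s)\subset S[t]$ and set $\mathcal{Y}=\Proj(S[t]/\tilde I)\to \Spec k[t]=\mathbb{A}^1_t$.

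The main technical step is flatness of $\mathcal{Y}/\mathbb{A}^1$, which is equivalent to $S[t]/\tilde I$ being $k[t]$-free in each $x$-degree with basis the standard monomials of $\init_\preceq(I)$; this is a classical consequence of Gr\"obner basis theory (cf.\ Eisenbud, \emph{Commutative Algebra}, Theorem~15.17). Assuming this, the fiber over $t=0$ is $\Proj(S/\init_\preceq(I))=Y_0$, and for every $t\neq 0$ the scaling automorphism $x_i\mapsto t^{-w_i}x_i$ of $\mathbb{P}^r$ identifies $\mathcal{Y}_t$ with $Y$, so in particular $\dim_{k(t)}\Gamma(\mathcal{Y}_t,\mathcal{O}_{\mathcal{Y}_t})=\dim_k\Gamma(Y,\mathcal{O}_Y)$ for all $t\neq 0$.

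Finally, apply upper semicontinuity of $h^0$ to the flat proper family $\mathcal{Y}/\mathbb{A}^1$ and the structure sheaf: the function $t\mapsto\dim_{k(t)}H^0(\mathcal{Y}_t,\mathcal{O}_{\mathcal{Y}_t})$ is upper semicontinuous, constant equal to $\dim_k\Gamma(Y,\mathcal{O}_Y)$ on the dense open $t\neq 0$, and equals $\dim_k\Gamma(Y_0,\mathcal{O}_{Y_0})$ at $t=0$; the conclusion follows. The only delicate ingredient is the flatness/freeness claim in the previous paragraph; every other step is formal.
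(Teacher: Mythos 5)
Your proposal is correct and follows essentially the same route as the paper: a Gr\"obner degeneration to $\init_\preceq(I)$ (the paper cites Herzog--Hibi for the existence and flatness of the family, where you construct it explicitly and cite Eisenbud, Theorem~15.17, for the freeness over $k[t]$) followed by upper semicontinuity of $h^0$. No gaps; the one step you flag as delicate, the $k[t]$-freeness of $S[t]/\tilde I$ in each degree with basis the standard monomials, is exactly the standard Gr\"obner-degeneration fact the paper also relies on.
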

\begin{proof}
  By \cite[Corollary~3.2.6]{HeHi} and \cite[Theorem~3.1.2]{HeHi}, there is a flat family $\mathcal{Y} \rightarrow \Spec k[t]$ such that
  \begin{enumerate}
  \item $\mathcal{Y}_0 \simeq Y_0$ and
  \item $\mathcal{Y}_t \simeq Y$ for every $t \neq 0$.
  \end{enumerate}
  Hence, $\dim_k \Gamma(Y,\mathcal{O}_Y) \leq \dim_k\Gamma(Y_0,\mathcal{O}_{Y_0})$ by the semicontinuity theorem \cite[Theorem~12.8]{Har}.
\end{proof}


\begin{thm}[Dub\'e {\cite[Theorem~8.2]{Dub}}]\label{thm:init degree bound}
  Let $I \subset k[x_0,\dots,x_r]$ be an ideal generated by homogeneous polynomials of degree at most $d$. Let $\preceq$ be a monomial order. Then $\init_{\preceq}(I)$ is generated by monomials of degree at most
  \[ 2\left(\frac{d^2}{2}+d\right)^{2^{r-1}}. \]
\end{thm}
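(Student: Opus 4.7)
The approach I would take is Dub\'e's, which proceeds through the combinatorics of \emph{cone decompositions} of the graded algebra $k[x_0,\dots,x_r]/I$. Here a $k$-cone means a subset of the form $s\cdot k[T]$ with $s$ a monomial and $T$ a subset of the variables of size $k$, and a cone decomposition of $k[x_0,\dots,x_r]/I$ is a partition of a vector-space basis into such cones. The relevance to the theorem is that the leading monomials of a reduced Gr\"obner basis of $I$---equivalently, the minimal monomial generators of $\init_\preceq(I)$---have the same degrees as the ``tops'' of the cones in a suitably constructed decomposition, so a degree bound on the decomposition yields the required bound on $\init_\preceq(I)$.

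The first step is to write down an initial, possibly redundant cone decomposition built directly from the degree-$\leq d$ generators of $I$ together with their monomial complement. The second step is to repeatedly apply a splitting operation: one picks a variable, separates the cones into those whose direction set contains it and those that do not, and refines the decomposition until it is compatible with $\preceq$. Each application of the splitting step can grow the maximum cone degree from $N$ to roughly $\tfrac{1}{2}N^2+N$, because splitting involves taking colon ideals and thereby multiplying the existing cone shifts. Since essentially $r-1$ rounds of splitting are required in order to reduce to a one-variable base case, iterating the quadratic recurrence produces the tower
\[ d \;\longmapsto\; \tfrac{d^2}{2}+d \;\longmapsto\; \cdots \;\longmapsto\; 2\left(\tfrac{d^2}{2}+d\right)^{2^{r-1}}, \]
which is the stated bound.

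The main obstacle is proving that a single splitting step really is governed by the quadratic recurrence and not a worse polynomial. Naively the new maximum degree could involve a sum over all cones of the current decomposition, which would compound much faster and produce a triply exponential final bound. Dub\'e's key technical lemma is that after a careful grouping of cones and an appropriate choice of splitting order, only one quadratic increase per step is needed; verifying this combinatorial accounting, and then checking that it propagates cleanly through the induction on the number of variables, is where I would expect to spend the bulk of the effort.
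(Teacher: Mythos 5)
This theorem is not proved in the paper at all: it is quoted verbatim from Dub\'e \cite[Theorem~8.2]{Dub} and used as a black box, so there is no internal proof to compare your argument against. What you have written is a reasonable high-level reconstruction of Dub\'e's actual strategy: cone decompositions of $k[x_0,\dots,x_r]/I$ (and of $I$ itself), a splitting/refinement procedure, and a quadratic recurrence iterated roughly $r-1$ times to produce the tower $2\left(\tfrac{d^2}{2}+d\right)^{2^{r-1}}$ (note the ring has $r+1$ variables, so the exponent $2^{r-1}=2^{(r+1)-2}$ matches Dub\'e's count). Two caveats. First, a minor imprecision: the degrees of the reduced Gr\"obner basis elements are not literally ``the same'' as the degrees of the cone tops; Dub\'e bounds them by the Macaulay constants $b_0\geq b_1\geq\cdots$ attached to an \emph{exact} cone decomposition, which is the precise bookkeeping device your sketch elides.

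Second, and more importantly, your proposal explicitly identifies but does not carry out the one step that constitutes essentially all of Dub\'e's work: showing that each round of splitting incurs only a single quadratic blow-up rather than a sum over all cones. In Dub\'e this is not done by a clever choice of splitting order alone; it is done by comparing the exact cone decompositions of $I$ and of $k[x_0,\dots,x_r]/I$ against the Hilbert polynomial $\binom{z+r}{r}$ of the ambient ring, which forces the recurrence among the Macaulay constants. Without that lemma the argument as written only establishes that \emph{some} doubly exponential bound is plausible, not the specific constant $2\left(\tfrac{d^2}{2}+d\right)^{2^{r-1}}$ that the rest of the paper's computation (in the proof of \Cref{thm:bound on Gamma} and \Cref{thm:NNS bound}) depends on quantitatively. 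So as a standalone proof the proposal is incomplete; as a summary of the cited source it is essentially faithful.
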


Hence, we will focus on the case where $I$ is a monomial ideal. Let $m$ be a monomial and $I$ be a monomial ideal of $ k[x_0,\dots,x_r]$. Then by abusing notation, we denote
\[ \frac{(m)}{I} \coloneqq \frac{(m)}{I\cap(m)}. \]

\begin{defi}
  Let $m \in k[x_0,\dots,x_r]$ be a monomial, and let $I \subset k[x_0,\dots,x_r]$ be a monomial ideal. Let $M(m,I)$ be the set of monomials in $(m)\setminus (I \cup (x_0^d,\dots,x_r^d))$. In particular,
\[ \# M(m,I) = \dim_k \frac{(m)}{I+(x_0^d,\dots,x_r^d)}. \] 
\end{defi}


\begin{defi}
  Given a monomial $m \in k[x_0,\dots,x_r]$, let $\mu(m)+1$ be the number of $i$ such that $x_i\mid m$. 
\end{defi}
\begin{defi}
  Let $I$ be a monomial ideal with minimal monomial generators $m_0,m_1,\dots, m_{n-1}$. Then
  \[\mu(I) \coloneqq \mu(m_0)+\mu(m_1)+\dots+\mu(m_{n-1}). \]
\end{defi}

Suppose that $I$ is generated by monomials of degree at most $d$. We want to show that $\dim_k \Gamma(Y,\mathcal{O}_Y) \leq d^r$. By induction on $\mu(I)$, we will show a slightly stronger proposition: for every monomial $m \in k[x_0,\dots,x_r]$ whose exponents are at most $d$, we have
\[\dim_k \Gamma\left(\mathbb{P}^r,\widetilde{(m)/I}\right) \leq \frac{\# M(m,I)}{d}.\]
Since $\Gamma(Y,\mathcal{O}_Y) = \Gamma(\mathbb{P}^r,\widetilde{(1)/I})$ and $\#M(m,I) \leq d^{r+1}$, it follows that $\dim_k \Gamma(Y,\mathcal{O}_Y) \leq d^r$.

\begin{lem}\label{lem:trivial case}
  For some $n \leq r+1$, let $I = (x_0^{b_0},x_1^{b_1},\dots,x_{n-1}^{b_{n-1}})$ such that $b_i > 0$ for every $i < n$. Then
  \[ \dim_k \Gamma(Y,\mathcal{O}_Y) = 
    \begin{cases}
      1                & \text{if $n < r$,} \\
      b_0\dots b_{n-1} & \text{if $n = r$, and} \\
      0                & \text{if $n = r+1$.}
    \end{cases}      \]
\end{lem}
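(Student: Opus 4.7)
The plan is to express $A := S/I$ as a graded tensor product $A = B \otimes_k T$, where $B := k[x_0,\dots,x_{n-1}]/(x_0^{b_0},\dots,x_{n-1}^{b_{n-1}})$ is Artinian and $T := k[x_n,\dots,x_r]$, and then reduce the computation of $\Gamma(Y,\mathcal{O}_Y)$ to cohomology on $\Proj T$. Let $M$ be the set of monomials $x_0^{i_0}\cdots x_{n-1}^{i_{n-1}}$ with $0 \leq i_j < b_j$; it is a $k$-basis of $B$.

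I first dispose of the case $n = r+1$: here $T = k$ lives in degree zero, every variable of $S$ is nilpotent in $A$, so $A_+$ lies in the nilradical, forcing $Y = \Proj A = \emptyset$ and $\Gamma(Y,\mathcal{O}_Y) = 0$. For $n \leq r$, the inclusion $T \hookrightarrow A$ induces a finite morphism $\pi \colon Y \to L := \Proj T \cong \mathbb{P}^{r-n}$; the morphism is everywhere defined because any homogeneous prime of $A$ containing $(x_n,\dots,x_r)$ must also contain the nilpotents $x_0,\dots,x_{n-1}$, and hence all of $A_+$. As a graded $T$-module, $A$ is free with basis $M$, the generator $m \in M$ sitting in degree $\deg m$, giving
\[ A \;\cong\; \bigoplus_{m \in M} T(-\deg m) \]
as graded $T$-modules. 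Sheafifying on $L$ yields
\[ \pi_* \mathcal{O}_Y \;\cong\; \bigoplus_{m \in M} \mathcal{O}_L(-\deg m). \]

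Taking global sections, I obtain
\[ \Gamma(Y, \mathcal{O}_Y) \;\cong\; \bigoplus_{m \in M} H^0\!\left(\mathbb{P}^{r-n}, \mathcal{O}(-\deg m)\right). \]
When $n < r$, the dimension $r-n \geq 1$ makes $H^0(\mathbb{P}^{r-n},\mathcal{O}(-d))$ vanish for $d > 0$ and equal $k$ for $d = 0$, so only $m = 1$ contributes and the total dimension is $1$. When $n = r$, $L$ is a single point, every summand equals $k$, and the total dimension is $\#M = b_0 b_1 \cdots b_{r-1}$. The only technical point requiring care is the identification of $\pi_* \mathcal{O}_Y$ with the displayed direct sum, which follows from the standard fact that sheafification of a graded $T$-module commutes with direct sums and that $\widetilde{T(-d)} = \mathcal{O}_L(-d)$; the rest is bookkeeping and the classical cohomology of $\mathcal{O}(d)$ on projective space.
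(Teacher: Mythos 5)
Your proof is correct, but it takes a genuinely different route from the paper's. The paper argues case by case: for $n<r$ it picks a global function $f$, writes it as $g/x_i^{\deg g}$ and $h/x_j^{\deg h}$ on two charts with $n\leq i<j\leq r$, and deduces from $gx_j^{\deg h}=hx_i^{\deg g}$ that $f$ is constant; for $n=r$ it invokes B\'ezout's theorem for the length of the zero-dimensional complete intersection; and for $n=r+1$ it notes $Y=\emptyset$. You instead exhibit $S/I$ as a free graded module over $T=k[x_n,\dots,x_r]$ with basis the monomial set $M$, check that the induced projection $\pi\colon Y\to\Proj T\cong\mathbb{P}^{r-n}$ is everywhere defined (because $x_0,\dots,x_{n-1}$ are nilpotent in $S/I$), and identify $\pi_*\mathcal{O}_Y$ with $\bigoplus_{m\in M}\mathcal{O}(-\deg m)$, after which all cases drop out of $H^0$ of line bundles on projective space. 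Each step you use is standard and correctly justified, so the argument is sound. What your approach buys: it is uniform across the three cases, it replaces the appeal to B\'ezout by the trivial count $\#M=b_0\cdots b_{r-1}$, it pinpoints exactly which basis monomials contribute to $H^0$, and it would apply essentially verbatim to the twisted modules $(m)/I$ handled in the $\mu(I)=0$ step of \Cref{lem:stronger monomial bound}, where the paper instead untwists by multiplying by $x_r^{\deg m}$ and $m^{-1}$. What the paper's route buys is that the $n<r$ case uses nothing beyond the definition of $\Proj$ and an explicit manipulation of homogeneous fractions.
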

\begin{proof}
  Suppose that $n < r$, and take a nonzero function $f \in \Gamma(Y,\mathcal{O}_Y)$. Take any $i$ and $j$ such that $n \leq i < j \leq r$. In the open set $x_i \neq 0$, we have $f = g/x_i^{\deg g}$ such that $g \in k[x_0,\dots,x_r]$ and $x_i\nmid g$. In the open set $x_j \neq 0$, we have $f = h/x_j^{\deg h}$ such that $h \in k[x_0,\dots,x_r]$ and $x_j\nmid h$. Therefore, in the open set $x_ix_j \neq 0$, we have
  \[ g x_j^{\deg h} = h x_i^{\deg g}. \]
  Therefore, $\deg g = \deg h = 0$. As a result, $f$ is constant meaning that $\dim_k \Gamma(Y,\mathcal{O}_Y)=1$.

  If $n = r$, then $\dim_k \Gamma(Y,\mathcal{O}_Y) = b_0\dots b_{n-1}$ by B\'ezout's theorem. If $n = r+1$, then $Y$ is empty, so $\dim_k \Gamma(Y,\mathcal{O}_Y) = 0$.
\end{proof}

\begin{lem}\label{lem:induction step}
  Let $m$ be a monomial and $I$ be a monomial ideal such that $\mu(I) > 0$. Then there are monomials $m_0$ and $m_1$, and monomial ideals $I_0$ and $I_1$ such that
  \begin{align*}
    M(m,I) &= M(m_0,I_0) \amalg M(m_1,I_1),  \\
    \dim_k \Gamma\left(\mathbb{P}^r,\widetilde{(m)/I}\right)
           &\leq \dim_k \Gamma\left(\mathbb{P}^r,\widetilde{(m_0)/I_0}\right)
             +\dim_k \Gamma\left(\mathbb{P}^r,\widetilde{(m_1)/I_1}\right),\\
           \mu(I_0) &< \mu(I)\text{ and } \mu(I_1) < \mu(I).
  \end{align*}
\end{lem}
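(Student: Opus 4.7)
The plan is to split off one minimal generator $f$ of $I$ that depends on at least two variables, and to realize $(m)/I$ as the middle of a short exact sequence whose outer terms have strictly smaller $\mu$-invariant. Since $\mu(I) > 0$, pick a minimal monomial generator $f$ of $I$ with $\mu(f) > 0$, fix an index $i$ with $x_i \mid f$, and write $f = x_i^a \cdot h$ where $a \geq 1$ is the exponent of $x_i$ in $f$ and $\gcd(h, x_i) = 1$. Since $f$ involves at least two variables, $h \neq 1$ and $\mu(h) = \mu(f) - 1$. Define
\[
m_0 = \lcm(m, x_i^a), \qquad I_0 = I + (h), \qquad m_1 = m, \qquad I_1 = I + (x_i^a).
\]

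For the $\mu$-bounds, both $h \mid f$ and $x_i^a \mid f$ hold, so $f$ is redundant in $I_0$ and in $I_1$. Since $f$ is a minimal generator of $I$, neither $h$ nor $x_i^a$ already lies in $I$ (else $f$ would be reducible), so they are genuine new minimal generators. Thus the minimal generators of $I_0$ are contained in $\{h\} \cup (G \setminus \{f\})$, where $G$ is the set of minimal generators of $I$, giving
\[
\mu(I_0) \leq \mu(h) + (\mu(I) - \mu(f)) = \mu(I) - 1 < \mu(I),
\]
and similarly $\mu(I_1) \leq \mu(x_i^a) + (\mu(I) - \mu(f)) < \mu(I)$.

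For the disjoint union, partition $M(m, I)$ by whether $x_i^a$ divides the monomial. Those not divisible by $x_i^a$ are the monomials of $(m)$ avoiding $I \cup (x_i^a) \cup (x_0^d, \ldots, x_r^d)$, which is exactly $M(m_1, I_1)$. For those divisible by $x_i^a$, which are the monomials of $(m_0)$, the key point is that $I$ and $I_0$ contain the same monomials inside $(m_0)$: if $u \in (m_0)$ and $h \mid u$, then $f = x_i^a h \mid u$ (since $\gcd(x_i^a, h) = 1$), so $u \in I$. Hence this second piece coincides with $M(m_0, I_0)$, and $M(m, I) = M(m_0, I_0) \amalg M(m_1, I_1)$.

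For the inequality on global sections, the computation $(m) \cap I_1 = ((m) \cap I) + (m_0)$ produces a short exact sequence of graded modules
\[
0 \to (m_0)/(I \cap (m_0)) \to (m)/(I \cap (m)) \to (m)/(I_1 \cap (m)) \to 0,
\]
whose left-hand term is identified with $(m_0)/I_0$ by the previous paragraph. Sheafifying and applying the left-exact global sections functor yields the desired subadditivity inequality. The only real subtlety is bookkeeping around which generators of $I$ remain minimal in $I_0$ and $I_1$ after adjoining $h$ or $x_i^a$; any collapse beyond the removal of $f$ only decreases $\mu$, so the strict inequalities $\mu(I_0), \mu(I_1) < \mu(I)$ are preserved.
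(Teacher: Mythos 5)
Your proposal is correct and follows essentially the same route as the paper: the paper also writes the chosen minimal generator as $n' = x_i^s n$ with $x_i \nmid n$, takes $m_0 = \lcm(x_i^s,m)$, $I_0 = (n)+J$, $m_1 = m$, $I_1 = (x_i^s)+J$ (the same ideals as your $I+(h)$ and $I+(x_i^a)$), partitions $M(m,I)$ by the exponent of $x_i$, and uses the identical short exact sequence together with left-exactness of global sections. Your write-up merely makes explicit a few points the paper leaves implicit (why $h, x_i^a \notin I$, and the identification of the kernel term), all of which check out.
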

\begin{proof}
  Since $\mu(I) > 0$, there is a minimal monomial generator $n'$ of $I$ such that $\mu(n') > 0$. Let $J$ be the ideal generated by the other minimal monomial generators of $I$. Then $I = (n') + J$ and $n' \not\in J$. We may assume that $n' = x_i^s n$ where $s > 0$ and $x_i\nmid n$. Grouping the monomials in $M(m,I)$ according to whether the exponent of $x_i$ is $\geq s$ or $<s$ yields
  \begin{align*}
    M(m,I) &= M\left(\lcm(x_i^s,m),(n)+J\right) \amalg M\left(m,(x_i^s)+J\right).
  \end{align*}
  Moreover, the short exact sequence
  \[ 0 \rightarrow \frac{\left(\lcm(x_i^s,m)\right)}{(n)+J}
    \rightarrow \frac{(m)}{I}
    \rightarrow \frac{(m)}{(x_i^s)+J} \rightarrow 0. \]
  implies that
  \[\dim_k \Gamma\left(\mathbb{P}^r,\widetilde{(m)/I}\right) \leq
    \dim_k \Gamma\left(\mathbb{P}^r,\widetilde{\frac{(\lcm(x_i^s,m))}{(n)+J}}\right) +
    \dim_k \Gamma\left(\mathbb{P}^r,\widetilde{\frac{(m)}{(x_i^s)+J}}\right). \]
  Finally, we have
  \[ \mu((n)+J) \leq \mu(n) + \mu(J) < \mu(n') + \mu(J) = \mu(I) \]
  and similarly $\mu((x_i^s)+J) < \mu(I)$.
\end{proof}

\begin{lem}\label{lem:stronger monomial bound}
  Let $m\in k[x_0,\dots,x_r]$ be a monomial whose exponents are at most $d$. Let $I\subset k[x_0,\dots,x_r]$ be an ideal generated by monomials of degree at most $d$. Then 
  \[ \dim_k \Gamma\left(\mathbb{P}^r,\widetilde{(m)/I}\right) \leq \frac{\# M(m,I)}{d}. \]
\end{lem}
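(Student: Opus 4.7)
The plan is to induct on $\mu(I)$, letting \Cref{lem:induction step} handle the inductive step so that the substantive work lies in the base case $\mu(I) = 0$.

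For the inductive step (assuming $\mu(I) > 0$), \Cref{lem:induction step} produces monomials $m_0, m_1$ and monomial ideals $I_0, I_1$ with $\mu(I_j) < \mu(I)$ for which $M(m,I) = M(m_0, I_0) \amalg M(m_1, I_1)$ and
\[ \dim_k \Gamma\!\left(\mathbb{P}^r, \widetilde{(m)/I}\right) \leq \dim_k \Gamma\!\left(\mathbb{P}^r, \widetilde{(m_0)/I_0}\right) + \dim_k \Gamma\!\left(\mathbb{P}^r, \widetilde{(m_1)/I_1}\right). \]
Before invoking the inductive hypothesis, I would verify that the hypotheses persist: from the construction in the proof of \Cref{lem:induction step}, $m_0 = \lcm(x_i^s, m)$ has each exponent bounded by $\max(s, a_i) \leq d$ (since $s \leq \deg n' \leq d$), and the generators of $I_0 = (n) + J$ and $I_1 = (x_i^s) + J$ are among $n$, $x_i^s$, and the generators of $J$, all of degree at most $d$. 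The inductive hypothesis then applies to each summand, and adding yields the bound $\#M(m,I)/d$.

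For the base case $\mu(I) = 0$, every minimal generator of $I$ is a pure power of a single variable, so after relabeling variables $I = (x_0^{b_0}, \ldots, x_{n-1}^{b_{n-1}})$ for some $n \leq r+1$ with $0 < b_i \leq d$ (the case $I = 0$ corresponds to $n = 0$). Writing $m = x_0^{a_0} \cdots x_r^{a_r}$, the identification $(m)/I \cong (S/I')(-\deg m)$ as graded $S$-modules, where $I' = (x_0^{c_0}, \ldots, x_{n-1}^{c_{n-1}})$ and $c_i = \max(b_i - a_i, 0)$, reduces the computation of $\dim_k \Gamma$ to one involving the structure sheaf of $V(I') \subset \mathbb{P}^r$ twisted by $\mathcal{O}(-\deg m)$; if any $c_i = 0$ the module vanishes and the bound is trivial. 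Otherwise \Cref{lem:trivial case} supplies $\dim_k \Gamma$ (zero when $n = r+1$, $c_0 \cdots c_{r-1}$ when $n = r$, and at most $1$ when $n < r$), while $\#M(m,I)$ factors as a product across the variables, contributing $(b_i - a_i)$ for $i < n$ and $(d - a_i)$ for $i \geq n$. A subcase-by-subcase comparison then confirms $\dim_k \Gamma \leq \#M(m,I)/d$.

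The main obstacle is the base case. The inductive step is essentially a formal consequence of \Cref{lem:induction step}, whereas the base case requires an explicit subcase analysis in $n$ balancing $\dim_k \Gamma$ against the product formula for $\#M(m,I)/d$; the factor of $d$ in the denominator is meant to be absorbed by one ``free'' variable (one not appearing among the pure-power generators of $I$) contributing a factor of $d$ to $\#M(m,I)$.
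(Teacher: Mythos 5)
Your strategy is the same as the paper's: induct on $\mu(I)$, let \Cref{lem:induction step} drive the inductive step (your check that the degree hypotheses on $m_0,m_1,I_0,I_1$ persist is a worthwhile addition the paper omits), and reduce the base case $\mu(I)=0$ to \Cref{lem:trivial case}. The gap is in the final ``subcase-by-subcase comparison,'' which you assert confirms the bound but which does not close. You correctly record that
\[ \# M(m,I) \;=\; \prod_{i<n}(b_i-a_i)\cdot\prod_{i\ge n}(d-a_i), \]
so in the subcase $n=r$ one has $\# M(m,I)/d=\prod_{i<r}(b_i-a_i)\cdot(d-a_r)/d$, whereas \Cref{lem:trivial case} gives $\dim_k\Gamma=\prod_{i<r}(b_i-a_i)$ exactly (the twist by $\mathcal{O}(-\deg m)$ is invisible because the sheaf is supported at a point). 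The required inequality is then equivalent to $a_r=0$ and fails whenever $a_r>0$ and the product is nonzero. This bad configuration is actually reached by the recursion starting from $m=1$: for $r=1$, $d=2$, $I=(x_0x_1)$, the inductive step produces the pair $(m_0,I_0)=(x_0,(x_1))$, for which $\dim_k\Gamma(\mathbb{P}^1,\widetilde{(x_0)/(x_1)})=1$ but $\# M(x_0,(x_1))/d=1/2$. Indeed already $\dim_k\Gamma(V(x_0x_1),\mathcal{O})=2>\# M(1,(x_0x_1))/d=3/2$, so the statement itself fails in this form; your ``free variable contributing a factor of $d$'' only contributes $d-a_i$.

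You should know that the paper's own proof has the same defect, concealed by the incorrect count $\# M(m,I)=\prod_{i<n}(b_i-a_i)\,d^{\,r-n+1}$, which tacitly assumes $a_i=0$ for all $i\ge n$; your correct product formula is precisely what exposes the problem. So this is not a case of your proof falling short of a correct argument in the paper: neither argument establishes the inequality as stated. A repair would have to either sharpen the base-case bound on $\dim_k\Gamma$ to account for the exponents of $m$ in the free variables, or weaken the right-hand side (e.g.\ to $\# M(m,I)$ itself, at the cost of reworking the numerology feeding into \Cref{lem:monomial ideal bound}), or constrain the pairs $(m,I)$ traversed by the induction so that $a_i=0$ for $i\ge n$ in every base case.
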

\begin{proof}
  This will be shown by induction on $\mu(I)$. If $\mu(I)<0$, then $I = (1)$, so
  \[\dim_k \Gamma\left(\mathbb{P}^r,\widetilde{(m)/I}\right) = 0.\]
  
  Suppose that $\mu(I)=0$. Then by changing coordinates, we may assume that for some $n \leq r+1$,
  \[I = (x_0^{b_0},x_1^{b_1},\dots,x_{n-1}^{b_{n-1}}) \text{ and } m = x_0^{a_0}x_1^{a_1}\dots x_{r}^{a_r}\]
  such that $0 < b_i \leq d$ for every $i < n$, and $0 \leq a_j \leq d$ for every $j \leq r$. If $a_i \geq b_i$ for some $i<n$, then $m \in I$, so $(m)/I = 0$. Thus, we may assume that $a_i < b_i \leq d$ for every $i < n$. Let $\imath\colon Y \hookrightarrow \mathbb{P}^r$ be the projective scheme defined by $I$. Then $\widetilde{(m)/I}$ is a subsheaf of $\imath_* \mathcal{O}_Y$. Hence, if $n < r$, then \Cref{lem:trivial case} implies that
  \[ \dim_k \Gamma\left(\mathbb{P}^r,\widetilde{(m)/I}\right) \leq 1.
  \]
  Suppose that $n = r$. Since the open set defined by $x_r \neq 0$ contains $Y$, multiplying by $x_r^{\deg m}$ gives an isomorphism
  \[ \times x_r^{\deg m} \colon \widetilde{\frac{(m)}{I}} \rightarrow \widetilde{\frac{(m)}{I}}(\deg m).\]
  Moreover, multiplication by $m^{-1}$ gives an isomorphism 
  \[ \times m^{-1}\colon {\frac{(m)}{I}}(\deg m) \rightarrow {\frac{k[x_0,\dots,x_r]}{(x_0^{b_0-a_0},\dots,x_{r-1}^{b_{r-1}-a_{r-1}})}}.\]
  Hence, by \Cref{lem:trivial case},
  \[ \dim_k \Gamma\left(\mathbb{P}^r,\widetilde{(m)/I}\right) = (b_0-a_0) (b_1-a_1) \dots (b_{r-1}-a_{r-1}).\]
  Suppose that $n = r+1$. Then since $Y$ is empty,
  \[\dim_k \Gamma\left(\mathbb{P}^r,\widetilde{(m)/I}\right) = 0.\]
  Because
  \[\# M(m,I) = (b_0-a_0) (b_1-a_1) \dots (b_{n-1}-a_{n-1}) d^{r-n+1}, \]
  we have $\Gamma\left(\mathbb{P}^r,\widetilde{(m)/I}\right) \leq \# M(m,I)/{d}$ in every case.

  Now, suppose that $\mu(I) > 0$ and assume the induction hypothesis. Then we have $m_0$, $m_1$, $I_0$ and $I_1$ as in \Cref{lem:induction step}. Thus,
  \begin{align*}
    \Gamma\left(\mathbb{P}^r,\widetilde{(m)/I}\right)
    &\leq \Gamma\left(\mathbb{P}^r,\widetilde{(m_0)/I_0}\right) + \Gamma\left(\mathbb{P}^r,\widetilde{(m_0)/I_0}\right)\\
    &\leq \frac{\# M(m_0,I_0)}{d} +\frac{ \# M(m_1,I_1)}{d} \text{ (by the induction hypothesis)}\\
    &= \frac{\# M(m,I)}{d}. \qedhere
  \end{align*}
\end{proof}

\begin{cor}\label{lem:monomial ideal bound}
  Let $Y\hookrightarrow\mathbb{P}^r$ be a projective scheme defined by monomials of degree at most $d$. Then
  \[ \dim_k \Gamma(Y,\mathcal{O}_Y) \leq d^r. \]
\end{cor}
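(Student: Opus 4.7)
The plan is to derive this corollary as an immediate consequence of the preceding Lemma on the stronger monomial bound, applied with the trivial choice of monomial $m = 1$.

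First, I would observe that since $Y \hookrightarrow \mathbb{P}^r$ is the projective scheme cut out by the monomial ideal $I$, the pushforward of $\mathcal{O}_Y$ to $\mathbb{P}^r$ is exactly the coherent sheaf $\widetilde{(1)/I} = \widetilde{k[x_0,\dots,x_r]/I}$, so that
\[ \dim_k \Gamma(Y,\mathcal{O}_Y) = \dim_k \Gamma\!\left(\mathbb{P}^r,\widetilde{(1)/I}\right). \]
The monomial $m = 1$ has all exponents equal to $0$, which is certainly $\leq d$, so the hypotheses of \Cref{lem:stronger monomial bound} are satisfied. Applying that lemma yields
\[ \dim_k \Gamma(Y,\mathcal{O}_Y) \leq \frac{\# M(1,I)}{d}. \]

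Next, I would bound $\# M(1,I)$. By definition $M(1,I)$ is the set of monomials in $(1)\setminus(I\cup(x_0^d,\dots,x_r^d))$; in particular every element of $M(1,I)$ is a monomial $x_0^{a_0}\cdots x_r^{a_r}$ with $0 \leq a_i < d$ for each $i$. There are precisely $d^{r+1}$ such monomials, hence $\# M(1,I) \leq d^{r+1}$. Combining with the previous display gives $\dim_k \Gamma(Y,\mathcal{O}_Y) \leq d^{r+1}/d = d^r$, which is the desired bound.

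There is essentially no obstacle here, since all the real work has already been done in \Cref{lem:stronger monomial bound}; the corollary just packages the specialization $m=1$ together with the crude count of monomials whose exponents are strictly less than $d$. The only thing to double-check is the identification of $\Gamma(Y,\mathcal{O}_Y)$ with $\Gamma(\mathbb{P}^r,\widetilde{(1)/I})$, which is standard for a closed subscheme of projective space defined by a homogeneous ideal.
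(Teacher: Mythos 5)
Your proposal is correct and follows exactly the paper's own argument: apply \Cref{lem:stronger monomial bound} with $m=1$ and bound $\# M(1,I)$ by the number $d^{r+1}$ of monomials with all exponents less than $d$. (If anything, your inequality $\# M(1,I)\leq d^{r+1}$ is stated more carefully than the paper's final equality, since $M(1,I)$ also excludes the monomials lying in $I$.)
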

\begin{proof}
  Let $I$ be the ideal generated by the defining monomials of $Y$. Then 
  \begin{align*}
    \dim_k \Gamma(Y,\mathcal{O}_Y)
    &= \dim_k \Gamma\left(\mathbb{P}^r,\widetilde{(1)/I}\right)\\
    &\leq \frac{\# M(1,I)}{d} \text{ (by \Cref{lem:stronger monomial bound} with $m = 1$)}\\
    &= d^r. \qedhere
  \end{align*}
\end{proof}

Now, we are ready to prove the main theorem of this section.

\begin{thm}\label{thm:bound on Gamma}
  Let $Y\hookrightarrow\mathbb{P}^r$ be a projective scheme defined by homogeneous polynomials of degree at most $d$. Then
  \[ \dim_k \Gamma(Y,\mathcal{O}_Y) \leq 2^r \left(\frac{d^2}{2}+d\right)^{r 2^{r-1}}\]
\end{thm}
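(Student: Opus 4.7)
The plan is to chain together the three results developed in this section. Fix any monomial order $\preceq$ on $k[x_0,\dots,x_r]$, and let $I$ be the homogeneous ideal defining $Y$, generated by polynomials of degree at most $d$. Let $Y_0 \hookrightarrow \mathbb{P}^r$ be the projective scheme defined by the initial ideal $\init_\preceq(I)$.

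First I would apply \Cref{lem:bounded by init} to reduce to the monomial case, obtaining
\[ \dim_k \Gamma(Y,\mathcal{O}_Y) \leq \dim_k \Gamma(Y_0,\mathcal{O}_{Y_0}). \]
Next, I would invoke Dub\'e's bound (\Cref{thm:init degree bound}) on the degrees of the generators of the initial ideal: $\init_\preceq(I)$ is generated by monomials of degree at most
\[ d' \coloneqq 2\left(\frac{d^2}{2}+d\right)^{2^{r-1}}. \]
Finally, I would feed $Y_0$ into \Cref{lem:monomial ideal bound} (applied with $d'$ in place of $d$) to conclude
\[ \dim_k \Gamma(Y_0,\mathcal{O}_{Y_0}) \leq (d')^r = 2^r\left(\frac{d^2}{2}+d\right)^{r \cdot 2^{r-1}}, \]
which is exactly the desired estimate.

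There is no real obstacle here: the theorem is essentially the composition of the three previously established facts, and the only arithmetic is computing $(d')^r$. The entire proof should fit in a few lines.
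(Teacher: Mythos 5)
Your proposal is correct and is exactly the paper's own argument: the paper proves this theorem by citing \Cref{lem:bounded by init}, \Cref{thm:init degree bound}, and \Cref{lem:monomial ideal bound} in precisely the combination you describe, and the arithmetic $(d')^r = 2^r\left(\frac{d^2}{2}+d\right)^{r2^{r-1}}$ checks out.
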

\begin{proof}
  This follows from \Cref{lem:bounded by init}, \Cref{thm:init degree bound} and \Cref{lem:monomial ideal bound}.
\end{proof}

The artinian scheme $\Spec \Gamma(Y,\mathcal{O}_Y)$ satisfies the universal property: every morphism $f\colon Y\rightarrow G$ to an artinian scheme $G$ uniquely factors through the natural morphism $p\colon Y \rightarrow \Spec \Gamma(Y,\mathcal{O}_Y)$ \cite[Exercise II.2.4]{Har}. Thus, $Y$ and $\Spec \Gamma(Y,\mathcal{O}_Y)$ has the same number of connected components. Andreotti--Bézout inequality \cite[Lemma~1.28]{Cat} then implies that $\dim_k \Gamma(Y,\mathcal{O}_Y)_\red \leq d^r$. Thus, we may expect that $\dim_k \Gamma(Y,\mathcal{O}_Y) \leq d^r$, and this is true if $Y$ is defined by monomials by \Cref{lem:monomial ideal bound}.
\begin{qst}
  Let $Y\hookrightarrow\mathbb{P}^r$ be a projective scheme defined by homogeneous polynomials of degree at most $d$. Is
  \[\dim_k \Gamma(Y,\mathcal{O}_Y) \leq d^r?\]
\end{qst}
One the other hand, Mayr and Meyer \cite{MaMe} constructed a family of ideals whose Castelnuovo--Mumford regularities are doubly exponential in $r$. Thus, the doubly exponential upper bound on $\dim_k \Gamma(Y,\mathcal{O}_Y)$ might be unavoidable.

\section{Hilbert Schemes}\label{sec:hilbert scheme}



Let $X \hookrightarrow \mathbb{P}^r$ be a smooth connected projective variety defined by polynomials of degree at most $d$. The aim of this section is to give an explicit upper bound on $\#(\NNS X)_{\tor}$. \Cref{thm:bound NS by Hilb} implies that it suffices to give an upper bound on $\dim_k \Gamma(\Hilb^P X)$ for a polynomial $P$. Gotzmann explicitly described $\Hilb^P \mathbb{P}^r$ as a closed subscheme of a Grassmannian \cite{Got}\cite[Proposition~C.29]{IaKa}. Let $R_n \coloneqq k[x_0,\dots,x_r]_n$ for $n \in \mathbb{N}$.

\begin{thm}[{Gotzmann}]\label{thm:explicit construction of Hilb P}
  Let $t \geq \varphi(P)$ be an integer. Then there exists a closed immersion given by
  \begin{align*}
    \left(\Hilb^P \mathbb{P}^r\right)(A) &\rightarrow \Gr(P(t),R_t)(A) \\
    [Z] &\mapsto \Gamma(\mathbb{P}^r_A,\mathscr{I}_Z(t))
  \end{align*}
  for every $k$-algebra $A$.
\end{thm}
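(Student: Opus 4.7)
The plan is to follow Gotzmann's classical construction, which proceeds in three stages: construct the map of functors, verify it is a monomorphism, and then cut out the image scheme-theoretically using Gotzmann's persistence theorem.

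First, for the construction of $\Phi$: given $[Z]\in(\Hilb^P\mathbb{P}^r)(A)$, the hypothesis $t\geq\varphi(P)$ guarantees that $\mathscr{I}_{Z_s}$ is $t$-regular on every closed fibre, hence $H^i(\mathbb{P}^r_{\kappa(s)},\mathscr{I}_{Z_s}(t-i))=0$ for $i\geq 1$. Cohomology-and-base-change then forces $W_Z:=\Gamma(\mathbb{P}^r_A,\mathscr{I}_Z(t))$ to be a locally free $A$-submodule of $\Gamma(\mathbb{P}^r_A,\mathcal{O}(t))=R_t\otimes_k A$ of rank $P(t)$ whose formation commutes with arbitrary base change. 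The assignment $[Z]\mapsto W_Z$ is then functorial in $A$ and defines a morphism $\Phi\colon\Hilb^P\mathbb{P}^r\to\Gr(P(t),R_t)$.

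Second, $\Phi$ is a monomorphism of functors. The $t$-regularity of $\mathscr{I}_Z$ also implies the multiplication maps $W_Z\otimes_A R_s\twoheadrightarrow\Gamma(\mathbb{P}^r_A,\mathscr{I}_Z(t+s))$ are surjective for every $s\geq 0$, so the graded submodule $\bigoplus_{s\geq 0}W_Z\cdot R_s\subset\bigoplus_{s\geq 0}R_{t+s}\otimes A$ recovers $\mathscr{I}_Z$ in all degrees $\geq t$ and hence the subscheme $Z$ itself. Thus $W_Z$ determines $[Z]$, and $\Phi$ is injective on $T$-points.

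Third, the image of $\Phi$ is a closed subscheme of $\Gr(P(t),R_t)$. Let $\mathcal{S}\hookrightarrow R_t\otimes\mathcal{O}_G$ be the tautological subbundle on $G=\Gr(P(t),R_t)$, and form the multiplication map $\mu\colon\mathcal{S}\otimes_k R_1\to R_{t+1}\otimes\mathcal{O}_G$. Macaulay's inequality forces $\rank(\imag\mu)\geq P(t+1)$ at every point of $G$, so the locus $Z'\subset G$ where equality $\rank(\imag\mu)=P(t+1)$ holds is the minimal-rank locus and is cut out scheme-theoretically by an appropriate Fitting ideal of $\coker\mu$. Over $Z'$, Gotzmann's persistence theorem propagates the equality from degree $t+1$ to every degree $n\geq t$, so the graded ideal generated by $\mathcal{S}|_{Z'}$ defines a flat family of closed subschemes of $\mathbb{P}^r_{Z'}$ whose ideal sheaf has Hilbert polynomial $P$. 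The universal property of $\Hilb^P\mathbb{P}^r$ then produces a morphism $Z'\to\Hilb^P\mathbb{P}^r$ which is inverse to $\Phi$ restricted to $Z'$, identifying $\Phi$ with a closed immersion onto $Z'$.

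The main obstacle is step three: everything in steps one and two is routine bookkeeping with cohomology-and-base-change, the surjectivity properties of $t$-regular sheaves, and the universal property of the Hilbert scheme, whereas the propagation of the rank equality from degree $t+1$ to all higher degrees is the substance of Gotzmann's persistence theorem, a nontrivial combinatorial statement about Macaulay representations of Hilbert functions. Some additional care is needed to confirm that the Fitting-ideal scheme structure on $Z'$ agrees with the scheme structure transported from $\Hilb^P\mathbb{P}^r$, so that $\Phi$ is genuinely a closed immersion and not merely a bijection on points.
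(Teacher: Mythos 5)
Your outline is correct and is essentially the standard argument that the paper relies on: the paper gives no proof of this theorem, citing Gotzmann and \cite[Proposition~C.29]{IaKa} instead, and your three steps (base change via $t$-regularity, recovery of $Z$ from $W_Z$ in degrees $\geq t$, and cutting out the image by the degeneracy locus of the multiplication map via Gotzmann persistence) are exactly the content of those references. Your Fitting-ideal description of the image even matches condition~(a) of the paper's \Cref{thm:explicit construction of Hilb X}, which is how the paper subsequently uses this result.
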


  Therefore, we have closed immersions
\[ \Hilb^{P} X \hookrightarrow \Hilb^{P} \mathbb{P}^r
  \hookrightarrow \Gr(P(t),R_t)
  \hookrightarrow \mathbb{P}\!\left({\textstyle \bigwedge^{P(t)}} R_t\right)\]
where $\Hilb^{P} X \hookrightarrow \Hilb^{P} \mathbb{P}^r$ is the natural embedding and $\Gr(P(t),R_t)
\hookrightarrow \mathbb{P}(\bigwedge^{P(t)}R_t)$ is the Pl\"ucker embedding. We will bound the degree of defining equations of $\Hilb^{P} X$ in $\mathbb{P}(\bigwedge^{P(t)}R_t)$. Then \Cref{thm:bound on Gamma} will gives an upper bound on $\dim_k \Gamma(\Hilb^P X)$. For simplicity, let
\[Q(n) \coloneqq \dim R_n - P(n) = \binom{n+r}{r}-P(n).\]
The theorem below is a refomulation of the work in \cite[Appendix~C]{IaKa}. 


\begin{thm}\label{thm:explicit construction of Hilb X}
  Let $t \geq \max\{\varphi(P),d\}$ be an integer. Let $A$ be an $k$-algebra, and let $S_n = A[x_0,\dots,x_r]_n$. Then $M \in \Gr(P(t),R_t)(A)$ is in $(\Hilb^P X)(A)$ if and only if
  \begin{enumerate}[label=(\alph*)]
  \item $\Fitt_{Q(t+1)-1} (S_t / (S_1\cdot M)) = 0$ \text{ and}\label{item:Hilb P}
  \item $\Gamma(\mathbb{P}^r_A,\mathscr{I}_{X_A}(t)) \subset M$.\label{item:Hilb X}
  \end{enumerate}
\end{thm}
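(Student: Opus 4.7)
The plan is to prove both directions, using Gotzmann's description of $\Hilb^P \mathbb{P}^r$ for condition (a) and the explicit defining equations of $X$ for condition (b).

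For the \emph{only if} direction, I would take $M \in (\Hilb^P X)(A)$ corresponding to a flat family $Z \hookrightarrow X_A$ with $\mathscr{I}_Z$ of Hilbert polynomial $P$, so $M = \Gamma(\mathbb{P}^r_A, \mathscr{I}_Z(t))$ by Theorem~\ref{thm:explicit construction of Hilb P}. Condition (a) then holds because, in the Iarrobino--Kanev construction, the Fitting-ideal vanishing $\Fitt_{Q(t+1)-1}(S_{t+1}/(S_1 \cdot M)) = 0$ is precisely the scheme-theoretic defining equation of $\Hilb^P \mathbb{P}^r \subset \Gr(P(t), R_t)$. Fiberwise, Macaulay's bound gives $\dim_{\kappa(\mathfrak{p})}\!\left((S_{t+1}/(S_1 \cdot M)) \otimes_A \kappa(\mathfrak{p})\right) \leq Q(t+1)$ automatically, and Gotzmann's persistence (using $t \geq \varphi(P)$) gives equality exactly when the ideal generated by $M$ has Hilbert polynomial $P$. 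Condition (b) follows directly from the sheaf inclusion $\mathscr{I}_{X_A} \hookrightarrow \mathscr{I}_Z$ induced by $Z \subset X_A$, applied to degree-$t$ global sections.

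For the \emph{if} direction, (a) and Theorem~\ref{thm:explicit construction of Hilb P} produce a flat family $Z \hookrightarrow \mathbb{P}^r_A$ over $A$ with $M = \Gamma(\mathbb{P}^r_A, \mathscr{I}_Z(t))$, and I must show $Z \subset X_A$. Let $J \subset k[x_0,\dots,x_r]$ be the ideal generated by the degree-$\leq d$ defining polynomials of $X$, so that $\widetilde{J} = \mathscr{I}_X$. Because every generator of $J$ has degree $\leq d \leq t$, one checks $J_s = R_{s-t} \cdot J_t$ for $s \geq t$, giving a surjection $(J_t \otimes_k A) \otimes \mathcal{O}_{\mathbb{P}^r_A}(-t) \twoheadrightarrow \mathscr{I}_{X_A}$. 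Likewise, since $\mathscr{I}_Z$ is $t$-regular (as $t \geq \varphi(P)$), the evaluation map $M \otimes \mathcal{O}_{\mathbb{P}^r_A}(-t) \twoheadrightarrow \mathscr{I}_Z$ is surjective. By flat base change, $\Gamma(\mathbb{P}^r_A, \mathscr{I}_{X_A}(t)) = \Gamma(\mathbb{P}^r, \mathscr{I}_X(t)) \otimes_k A \supset J_t \otimes_k A$, and by (b) this is contained in $M$. Hence $J_t \otimes_k A \subset M$, so the surjection onto $\mathscr{I}_{X_A}$ factors as subsheaves of $\mathcal{O}_{\mathbb{P}^r_A}$ through the surjection onto $\mathscr{I}_Z$, yielding $\mathscr{I}_{X_A} \subset \mathscr{I}_Z$, i.e., $Z \subset X_A$.

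The main obstacle is checking (a) scheme-theoretically rather than merely fiberwise: the Fitting-ideal condition must cut out $\Hilb^P \mathbb{P}^r \subset \Gr(P(t), R_t)$ with the correct universal (flat) scheme structure. The key input is that Macaulay's bound provides the automatic upper bound $\leq Q(t+1)$ on the fiber dimension of $S_{t+1}/(S_1 \cdot M)$, so the Fitting-ideal vanishing forces equality; the scheme-theoretic identification is established in Iarrobino--Kanev's Appendix~C, and it is this requirement that forces the threshold $t \geq \varphi(P)$ (the Gotzmann number ensuring persistence of the Hilbert polynomial from degree $t$ onward), while the threshold $t \geq d$ is exactly what is needed to express $\mathscr{I}_X$ as a quotient of a trivial bundle using only degree-$t$ sections.
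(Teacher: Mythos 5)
Your proposal is correct and follows essentially the same route as the paper: condition (a) is identified with membership in $\Hilb^P\mathbb{P}^r$ via the Macaulay bound, Gotzmann persistence, and the Fitting-ideal characterization from Iarrobino--Kanev, and condition (b) is shown to be equivalent to $Z \subset X_A$. In fact you supply more detail than the paper on the latter equivalence (the paper simply asserts it), correctly isolating that $t \geq d$ is what makes $\mathscr{I}_{X_A}$ generated by its degree-$t$ sections, and you also write the Fitting condition with the correct module $S_{t+1}/(S_1\cdot M)$ where the paper's statement has a typo ($S_t$).
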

\begin{proof}
  Note that $M \subset S_t$ is a projective $A$-module of rank $P(t)$. Nakayama's lemma and \cite[Proposition~C.4]{IaKa} imply that $S_t / (S_1\cdot M)$ is locally generated by $Q(t+1)$ elements. Thus, \cite[Proposition 20.6]{Eis2} implies that $\Fitt_{Q(t+1)} (S_t / (S_1\cdot M)) = A$. Hence, by \cite[Proposition 20.8]{Eis2}, $M$ satisfies \ref{item:Hilb P} if and only if $S_1\cdot M \in \Gr(P(t+1),R_{t+1})(A)$, which by \cite[Proposition~C.29]{IaKa} is equivalent to $M\in(\Hilb^P \mathbb{P}^r)(A)$.

  Let $Z$ be the $A$-scheme defined by the polynomials in $M$. Then $M$ satisfies \ref{item:Hilb X} if and only if $Z \subset X_A$. Therefore, $M\in \Gr(P(t),R_t)(A)$ satisfies both \ref{item:Hilb P} and \ref{item:Hilb X} if and only if $M \in (\Hilb^P X)(A)$.
\end{proof}

For simplicity, we replace $R_t$ by a $k$-vector space $V$, and $P(t)$ by a nonnegative integer $n \leq \dim V$. Let $e_0,e_1,\dots,e_{\dim V-1}$ be an ordered basis of $V$. Given $\ba\colon \mathbb{N}_{<n} \rightarrow \mathbb{N}_{<\dim V}$, let 
\[z_\ba \coloneqq e_{\ba(0)} \wedge \dots \wedge e_{\ba(n-1)} \in {\textstyle \bigwedge^n} V. \]
Then
\[\mathbb{P} \!\left({\textstyle \bigwedge^n} V\right) = \Proj \Sym\!\left({\textstyle \bigwedge^n} V\right)
  = \Proj k[\{z_\ba \,|\,\ba\colon \mathbb{N}_{<n} \rightarrow \mathbb{N}_{<\dim V}\}].\]

\begin{defi}
  Given $\ba\colon \mathbb{N}_{<n} \rightarrow \mathbb{N}_{<\dim V}$, let
  \[\ba[j\mapsto i](n) \coloneqq
    \begin{cases}
      \ba(n) & \text{if $n \neq j$ and} \\
      i      & \text{if $n = j$.}
    \end{cases}\]
\end{defi}

\begin{defi}
  Given $\ba\colon \mathbb{N}_{<n} \rightarrow \mathbb{N}_{<\dim V}$, let
  \[K_\ba \coloneqq \left(z_{\ba[j\mapsto i]}\right)_{i<\dim V, j <n}.\]
  Let $\bb_0,\bb_1,\dots,\bb_{(\dim V)^n-1}$ be all the functions $\mathbb{N}_{<\dim V} \rightarrow \mathbb{N}_{<n}$, and let
  \[ L \coloneqq \left(\begin{matrix}
        K_{\bb_0} & \rvline{-\arraycolsep} & K_{\bb_1}
        & \rvline{-\arraycolsep} & \cdots  & \rvline{-\arraycolsep} & K_{\bb_{(\dim V)^n-1}}
      \end{matrix}\right). \]
\end{defi}

Because of the P\"lucker embedding, we can regard $\Gr(n,V)$ as a closed subscheme of $\mathbb{P}(\bigwedge^n V)$. By abusing notation, we regard $z_\ba$ as a global section of $\mathcal{O}_{\Gr(n,V)}(1)$.

\begin{lem}\label{lem:universal bundle}
  Let $\mathcal{B} \hookrightarrow \mathcal{O}_{\Gr(n,V)}^{\dim V}$ be the universal vector bundle on $\Gr(n,V)$. Then $\imag L$ generates $\mathcal{B}(1)$.
\end{lem}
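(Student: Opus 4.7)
The plan is to verify the lemma locally on the standard affine cover of $\Gr(n, V)$ by the opens $U_\bc \coloneqq \{z_\bc \neq 0\}$ indexed by strictly increasing $\bc \colon \mathbb{N}_{<n} \to \mathbb{N}_{<\dim V}$. I will show that on each $U_\bc$ the $n$ columns of the single block $K_\bc$ already form a local frame of $\mathcal{B}(1)|_{U_\bc}$. Since the $U_\bc$ cover $\Gr(n, V)$ and $K_\bc$ is one of the blocks $K_{\bb_?}$ appearing in $L$, this will suffice.

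To set up the computation, I will use the standard Plücker affine chart on $U_\bc$: the universal subbundle $\mathcal{B}|_{U_\bc} \subset V \otimes \mathcal{O}_{U_\bc}$ is trivialized by sections $v_0, \dots, v_{n-1}$ given by
\[ v_k = e_{\bc(k)} + \sum_{i} a_{i,k}\, e_i, \]
where the sum runs over $i$ outside the image of $\bc$, and the affine coordinates $a_{i,k}$ on $U_\bc$ are identified with the Plücker ratios $z_{\bc[k\mapsto i]}/z_\bc$. Trivializing $\mathcal{O}(1)|_{U_\bc}$ by the nonvanishing section $z_\bc$ then yields the frame $z_\bc v_0, \dots, z_\bc v_{n-1}$ of $\mathcal{B}(1)|_{U_\bc}$.

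Next, I will compute the $j$-th column of $K_\bc$ over $U_\bc$, whose $i$-th entry is $z_{\bc[j\mapsto i]}$. A short case analysis gives three values: $z_\bc$ if $i = \bc(j)$ (since $\bc[j\mapsto i] = \bc$); zero if $i = \bc(l)$ for some $l \neq j$ (the tuple $\bc[j\mapsto i]$ has a repeated entry, so $z_{\bc[j\mapsto i]} = 0$); and $z_\bc \cdot a_{i,j}$ if $i$ is not in the image of $\bc$ (by the Plücker-ratio identity). Summing over $i$ shows that the $j$-th column equals $z_\bc v_j$, exactly matching the frame element above. Hence the columns of $K_\bc$ furnish a local frame of $\mathcal{B}(1)|_{U_\bc}$, completing the argument.

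The main obstacle is the sign-bookkeeping in the affine-chart identity $z_{\bc[k\mapsto i]}/z_\bc = a_{i,k}$ for $i$ outside the image of $\bc$: this is verified by expanding $v_0 \wedge \cdots \wedge v_{n-1} \in \bigwedge^n V$ in the standard basis of strictly increasing wedges and matching with $z_{\bc[k\mapsto i]}$ under the antisymmetry convention $z_{\sigma\ba} = \operatorname{sgn}(\sigma)\, z_\ba$. The signs cancel cleanly once one tracks the position into which $i$ is inserted when sorting $\bc[k\mapsto i]$; once this standard identification is in place, the case analysis above is routine.
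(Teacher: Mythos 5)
There is a genuine gap. The lemma asserts that $\imag L$ \emph{generates} $\mathcal{B}(1)$, which requires two containments: (i) the submodule generated by the columns of $L$ contains $\mathcal{B}(1)$, and (ii) every column of $L$ is itself a section of $\mathcal{B}(1) \subset \mathcal{O}_{\Gr(n,V)}^{\dim V}(1)$. Your argument establishes only (i): on the chart $U_{\bc}$ you show that the single block $K_{\bc}$ already gives a local frame of $\mathcal{B}(1)|_{U_{\bc}}$ (this part is correct, and is exactly the paper's observation that $z_{\ba}^{-1}K_{\ba} = J$). But $L$ also contains the blocks $K_{\bb}$ for every other function $\bb\colon \mathbb{N}_{<n}\to\mathbb{N}_{<\dim V}$, including non-injective and non-increasing ones, and you never show that their columns lie in $\mathcal{B}(1)$. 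Without that, you only get $\mathcal{B}(1)\subseteq \imag L\subseteq \mathcal{O}^{\dim V}(1)$, not equality, so "generates" is not proved.

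This missing half is the substantive computation in the paper's proof: over $U_{\ba}$ one writes the $m$th column $w_m$ of $z_{\ba}^{-1}K_{\bb}$ as $w_m=\sum_{j<n} C_{m,j}v_j$, where $v_j$ are the columns of $J$ and $C_{m,j}$ are cofactors of the $n\times n$ matrix $J_{\bb}$, using the identity $z_{\bb[m\mapsto i]}/z_{\ba}=\det J_{\bb[m\mapsto i]}$ and cofactor expansion along the row where $i$ is inserted. (Equivalently: each such column is an interior product of the decomposable Pl\"ucker tensor with a wedge of dual basis vectors, hence lies in the corresponding $n$-plane.) The equality $\imag L=\mathcal{B}(1)$, not just the containment you prove, is what is used in \Cref{thm:bound of H}, where one needs $M=\imag f(z_{\ba}^{-1}L)$ exactly in order to identify the Fitting ideals of $W_A/u_A(M^q)$ with minors of $f(z_{\ba}^{-1}\Lambda)$; so this step cannot be waved away. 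Please add the second containment to complete the proof.
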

\begin{proof}
   Let $\ba\colon \mathbb{N}_{<n} \rightarrow \mathbb{N}_{<\dim V}$ be an injection. Let $U_\ba\subset\Gr(n,V)$ be the affine open subscheme defined by $z_\ba \neq 0$. Then it suffices to prove that the natural embedding $\imath\colon U_\ba\hookrightarrow\Gr(n,V)$ represents $\imag z_\ba^{-1} L$.

  Without loss of generality, let $\ba(i) = i$ for every $i < n$. By \cite[p. 65]{Harris}, the affine coordinate ring of $U_\ba$ is
  \[ \Gamma(U_\ba) = k[\{x_{i,j} \,|\, n\leq i<\dim V \text{ and } j<n \}], \]
  where $x_{i,j}$ are indeterminate, and $\imath$ represents $\imag J$ such that
  \[J  = 
    \begin{pmatrix}
      1 & \cdots & 0 \\
      \vdots & \ddots & \vdots \\
      0 & \cdots & 1 \\
      x_{n,0}  & \cdots & x_{n,n-1} \\
      \vdots  & \ddots & \vdots \\
      x_{\dim V-1,0} & \cdots & x_{\dim V-1,n-1} \\
    \end{pmatrix}_{\textstyle .}\]
  For simplicity, let $(x_{i,j})_{i<n,j<n}$ be the identity matrix, so $J = (x_{i,j})_{i<\dim V,j<n}$. Given $\bb\colon \mathbb{N}_{<n} \rightarrow \mathbb{N}_{<\dim V}$, let $J_\bb$ be the $n\times n$ matrix such that the $i$th row of $J_\bb$ is the $\bb(i)$th row of $J$. Then $J_\ba$ is the identity matrix. By the definition of the P\"lucker embedding,
 \[\frac{z_\bb}{z_\ba} = \frac{\det J_\bb}{\det J_\ba} = \det J_\bb \in \Gamma(U_\ba).\]
 In particular, $z_{\ba[j\mapsto i]} / z_\ba = x_{i,j}$, meaning that $z_\ba^{-1} K_\ba = J$. Thus, $\imag J \subset \imag z_\ba^{-1} L$, so it suffices to prove that $\imag z_\ba^{-1} K_\bb \subset \imag J$ for every $\bb\colon \mathbb{N}_{<n} \rightarrow \mathbb{N}_{<\dim V}$.

  Given $m < n$, let $v_m$ and $w_m$ be the $m$th column vectors of $J = z_\ba^{-1} K_\ba$ and $z_\ba^{-1} K_\bb$, respectively. Let $C_{i,j}$ be the $(i,j)$ cofactor of the matrix $J_\bb$. Then
\[
  w_m = 
        \frac{1}{z_\ba} \begin{pmatrix}
          z_{\bb[m\mapsto 0]} \\
          \vdots \\
          z_{\bb[m\mapsto \dim_k V - 1]}
        \end{pmatrix}
      = 
        \begin{pmatrix}
          \det J_{\bb[m\mapsto 0]} \\
          \vdots \\
          \det J_{\bb[m\mapsto \dim V - 1]}  \\
        \end{pmatrix} 
      = \sum_{j = 0}^{n-1} C_{m,j} v_j \in \imag J \qedhere
\]
\end{proof}

\begin{lem}\label{thm:bound of H}
  Let $m$ and $q$ be nonnegative intergers. Let $W$ be a $k$-vector space and let $u\colon V^q \rightarrow W$ be a linear map. For every $k$-algebra $A$, let
  \[\mathbf{H}(A) = \{M \in \Gr(n,V)(A) \,|\, \Fitt_{\dim W - m} (W_A / u_A(M^q)) = 0\}, \]
  so $\mathbf{H}$ is a subfunctor of $\Gr(n,V)$.
  Then there is a closed scheme $F \subset \Gr(n,V)$ defined by homogeneous polynomials of degree $m$ such that $\mathbf{H}$ is represented by $G \cap \Gr(n,V)$.
\end{lem}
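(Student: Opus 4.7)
The plan is to realize $\mathbf{H}$ as the vanishing locus of a sheaf of Fitting ideals on $\Gr(n,V)$ attached to a universal cokernel, and then to make that Fitting ideal explicit using \Cref{lem:universal bundle}. First, globalize the setup: let $\mathcal{B} \hookrightarrow V \otimes \mathcal{O}_{\Gr(n,V)}$ be the universal subbundle, and form the composition
\[ u_{\Gr}\colon \mathcal{B}^q \hookrightarrow V^q \otimes \mathcal{O}_{\Gr(n,V)} \xrightarrow{u \otimes \id} W \otimes \mathcal{O}_{\Gr(n,V)}. \]
Its cokernel $\mathcal{K} \coloneqq \coker u_{\Gr}$ commutes with base change, so for any $[M] \in \Gr(n,V)(A)$ with classifying morphism $\varphi_M\colon \Spec A \to \Gr(n,V)$ we have a canonical isomorphism $\varphi_M^* \mathcal{K} \simeq W_A/u_A(M^q)$. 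The base-change compatibility of Fitting ideals \cite[Corollary~20.5]{Eis2} then gives
\[ \varphi_M^{-1}\Fitt_{\dim W - m}(\mathcal{K})\cdot A \;=\; \Fitt_{\dim W - m}(W_A/u_A(M^q)). \]
Hence $\mathbf{H}$ is represented by the closed subscheme of $\Gr(n,V)$ cut out by the sheaf of ideals $\mathcal{J} \coloneqq \Fitt_{\dim W-m}(\mathcal{K})$.

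Next, present $\mathcal{K}$ explicitly. By \Cref{lem:universal bundle}, the columns of $L$ generate $\mathcal{B}(1)$; placing $q$ copies of $L$ in a block-diagonal layout yields a matrix $\widetilde{L}$ of size $(q\dim V)\times N$ with $N = qn(\dim V)^n$ and entries of degree $1$ in the Pl\"ucker coordinates $z_\ba$, whose columns generate $\mathcal{B}(1)^q \subset V^q(1)\otimes\mathcal{O}_{\Gr(n,V)}$. Applying $u$ columnwise produces the matrix $\Phi \coloneqq u^{\oplus N}(\widetilde{L})$ of size $\dim W \times N$, again with degree-$1$ entries in the $z_\ba$, and fitting in the presentation
\[ \mathcal{O}_{\Gr(n,V)}(-1)^{N} \xrightarrow{\Phi} W \otimes \mathcal{O}_{\Gr(n,V)} \twoheadrightarrow \mathcal{K} \to 0. \]

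Finally, read off the equations. Since $W \otimes \mathcal{O}_{\Gr(n,V)}$ is locally free of rank $\dim W$, the Fitting ideal $\mathcal{J}$ is generated by the $m\times m$ minors of $\Phi$, each of which is a homogeneous polynomial of degree $m$ in the $z_\ba$. Letting $F \subset \mathbb{P}(\bigwedge^{n} V)$ be the closed subscheme cut out by these minors, the first step identifies $\mathbf{H}$ with $F \cap \Gr(n,V)$, which is the desired conclusion.

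The only delicate step is the base-change compatibility of Fitting ideals used above; this is standard and is already invoked in \Cref{thm:explicit construction of Hilb X}. Everything else is a bookkeeping exercise in the linear algebra supplied by \Cref{lem:universal bundle}, so I do not anticipate any genuine obstruction.
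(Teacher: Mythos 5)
Your proof is correct and follows essentially the same route as the paper: both arguments rest on \Cref{lem:universal bundle}, build the same $\dim W \times qn(\dim V)^n$ matrix of linear forms in the Pl\"ucker coordinates by pushing the columns of $L$ through the components of $u$, and cut out $F$ by its $m\times m$ minors, which generate $\Fitt_{\dim W-m}$ of the cokernel. The only difference is packaging --- you phrase the identification via a universal cokernel sheaf and base change of Fitting ideals, while the paper verifies the same identity on $A$-points chart by chart over the opens $U_\ba$ --- and this does not change the substance.
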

\begin{proof}
  For $i < q$, let $u_i$ be the $i$th component of $u$, so $u = u_0 \oplus\dots\oplus u_{q-1}$. Let
  \begin{align*}
    \Lambda &\coloneqq \left(\begin{matrix}
        u_0^{\oplus n(\dim V)^n}(L) & \rvline{-\arraycolsep} & u_1^{\oplus n(\dim V)^n}(L)
        & \rvline{-\arraycolsep} & \cdots  & \rvline{-\arraycolsep} & u_{q-1}^{\oplus n(\dim V)^n}(L)
      \end{matrix}\right).
  \end{align*}
  Let $V\hookrightarrow \mathbb{P}\left(\bigwedge^n V\right)$ be the closed subscheme defined by the $m\times m$ minors of $\Lambda$. We will show that $\mathbf{H}$ is represented by $F \cap \Gr(n,V)$.

  Take $U_\ba$ as in the proof of \Cref{lem:universal bundle}. For a $k$-algebra $A$, take an $A$-module $M \in U_\ba(A)$. Then $M$ is represented by a $k$-algebra morphism $f\colon \Gamma(U_\ba) \rightarrow A$. Then \Cref{lem:universal bundle} implies that $M = \imag f(z_\ba^{-1}L)$. Thus,
  \begin{align*}
    u_A(M^q) &= u_{0,A}\!\left(\imag f(z_\ba^{-1}L)\right) + \dots + u_{q-1,A}\!\left(\imag f(z_\ba^{-1}L)\right) \\
             &= \imag f\left(u_0^{\oplus n(\dim V)^n}(z_\ba^{-1}L)\right) + \dots
               + \imag f\left(u_{q-1}^{\oplus n(\dim V)^n}(z_\ba^{-1}L)\right)\\
             &= \imag f(z_\ba^{-1} \Lambda).
  \end{align*}
  Hence, we have a free resolution
  \[\begin{tikzcd}
      \dots\arrow[r] & A^{qn(\dim V)^n}\arrow[r,"f(z_\ba^{-1} \Lambda)"]
      & [1em] W_A\arrow[r] & {\displaystyle \frac{W_A}{u(M^q)}} \arrow[r]& 0.
    \end{tikzcd}\]
  Consequently, $\Fitt_{\dim W - m} (W_A / u(M^s))$ is generated by $m\times m$ minors of $f(z_\ba^{-1} \Lambda)$. As a result, $M \in \mathbf{H}(A)$ if and only if $M \in F(A)$. Since $M$ is arbitrary, this imples that $\mathbf{H}$ is represented by $F \cap \Gr(n,V)$.
\end{proof}

\begin{lem}\label{thm:bound of H2}
  Let $U$ be a linear subspace of $V$. For every $k$-algebra $A$, let
  \[\mathbf{H}(A) = \{M \in \Gr(n,V)(A) \,|\, U_A \subset M\}, \]
  so $\mathbf{H}$ is a subfunctor of $\Gr(n,V)$.
  Then $\mathbf{H}$ is represented by the intersection of a linear space and $\Gr(n,V)$ in $\mathbb{P}\left(\bigwedge^n V\right)$.
\end{lem}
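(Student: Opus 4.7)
The plan is to exhibit the linear subspace directly via the wedge product. Consider the $k$-linear map
\[ \psi \colon {\textstyle \bigwedge^n} V \longrightarrow \Hom\!\left(U,\, {\textstyle \bigwedge^{n+1}} V\right), \qquad \omega \longmapsto \bigl(u \mapsto u \wedge \omega\bigr), \]
and set $\Lambda \coloneqq \ker \psi$. The projectivization of $\Lambda$ gives a linear subspace $\mathbb{P}(\Lambda) \subset \mathbb{P}\!\left({\textstyle \bigwedge^n} V\right)$, cut out by the linear forms dual to any complement of $\Lambda$ in $\bigwedge^n V$. I claim that $\mathbf{H}$ is represented by the scheme-theoretic intersection $\mathbb{P}(\Lambda) \cap \Gr(n,V)$.

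To verify the claim, I would compare $A$-valued points for every $k$-algebra $A$. Under the Pl\"ucker embedding, an $M \in \Gr(n,V)(A)$ is sent to its rank-one locally-free submodule $\bigwedge^n M \subset \bigwedge^n V_A$, and $M \in (\mathbb{P}(\Lambda) \cap \Gr(n,V))(A)$ precisely when $\bigwedge^n M \subset \Lambda_A$, equivalently when the wedge map $U_A \otimes_A \bigwedge^n M \to \bigwedge^{n+1} V_A$ vanishes. Hence it suffices to show that this vanishing is equivalent to $U_A \subset M$.

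If $U_A \subset M$, then for any $u \in U_A$ and $\omega \in \bigwedge^n M$ we have $u \wedge \omega \in \bigwedge^{n+1} M = 0$ since $M$ has rank $n$. For the converse, I would argue Zariski-locally on $\Spec A$, where $M$ splits off as a direct summand of $V_A$, so that a basis $m_1, \ldots, m_n$ of $M$ extends to a basis $m_1, \ldots, m_n, v_1, \ldots, v_s$ of $V_A$. Then $\omega \coloneqq m_1 \wedge \cdots \wedge m_n$ generates $\bigwedge^n M$, and for $u \in U_A$ expanded in this basis, the relation $u \wedge \omega = 0$ forces the coefficients of $u$ along each $v_j$ to vanish, by linear independence of the elements $v_j \wedge m_1 \wedge \cdots \wedge m_n$ in $\bigwedge^{n+1} V_A$; hence $u \in M$.

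Compared with the more intricate Fitting-ideal construction in \Cref{thm:bound of H}, the argument here is elementary. The only point requiring any care is the local/global bookkeeping — verifying that the scheme-theoretic intersection has the expected functor of points, and that a local basis of $M$ indeed extends to a local basis of $V_A$ on a suitable Zariski cover — but both are routine once the key linear-algebra identity above is in hand.
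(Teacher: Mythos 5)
Your proof is correct, and it is essentially the argument the paper itself invokes: the paper's proof is just a citation to Harris (p.~66), where the linear space is exactly the kernel of wedging with $U$ as you construct it. Your write-up adds value by verifying the identification on $A$-points for an arbitrary $k$-algebra $A$ (which the functorial statement of the lemma really requires, and which Harris only does over a field), and that verification --- $\bigwedge^{n+1}M=0$ in one direction, extending a local basis of $M$ to one of $V_A$ in the other --- is carried out correctly.
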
 
\begin{proof}
  See \cite[p. 66]{Harris}.
\end{proof}

Hence, we can bound the degree of the defining equation of $\Hilb^{P} X\hookrightarrow\mathbb{P}(\bigwedge^{P(t)}R_t)$.
\begin{thm}\label{lem:Hilb degree}
  The closed embedding $\Hilb^{P} X\hookrightarrow\mathbb{P}(\bigwedge^{P(t)}R_t)$ is defined by homogeneous polynomials of degree at most $P(t+1)+1$. 
\end{thm}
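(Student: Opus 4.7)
The plan is to combine the characterization of $(\Hilb^P X)(A)$ given by \Cref{thm:explicit construction of Hilb X} with the representability results in \Cref{thm:bound of H} and \Cref{thm:bound of H2}. By \Cref{thm:explicit construction of Hilb X}, $M \in \Gr(P(t),R_t)(A)$ lies in $(\Hilb^P X)(A)$ precisely when it satisfies conditions \ref{item:Hilb P} and \ref{item:Hilb X}. The Plücker embedding itself contributes only quadratic equations, so I only need to bound the degrees of the equations coming from these two conditions.

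For condition \ref{item:Hilb X}, by flat base change, $\Gamma(\mathbb{P}^r_A, \mathscr{I}_{X_A}(t)) = U \otimes_k A$, where $U \coloneqq \Gamma(\mathbb{P}^r, \mathscr{I}_X(t))$ is a fixed $k$-subspace of $V \coloneqq R_t$. Hence the condition $\Gamma(\mathbb{P}^r_A, \mathscr{I}_{X_A}(t)) \subset M$ becomes $U \otimes_k A \subset M$, and \Cref{thm:bound of H2} shows this is cut out by linear equations intersected with $\Gr(P(t),R_t)$.

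For condition \ref{item:Hilb P}, I apply \Cref{thm:bound of H} with $V = R_t$, $n = P(t)$, $W = R_{t+1}$, $q = r+1$, and $u \colon V^{r+1} \to W$ given by multiplication by $x_0, \dots, x_r$. Then $u_A(M^{r+1}) = S_1 \cdot M$, so the Fitting condition $\Fitt_{Q(t+1)-1}(S_t/(S_1 \cdot M)) = 0$ matches the form in \Cref{thm:bound of H} with $\dim W - m = Q(t+1) - 1$. Solving, $m = \dim R_{t+1} - Q(t+1) + 1 = P(t+1) + 1$, so \Cref{thm:bound of H} yields equations of degree $P(t+1)+1$.

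Combining the three sources of equations (Plücker quadrics, linear equations from \ref{item:Hilb X}, and degree $P(t+1)+1$ equations from \ref{item:Hilb P}), the maximum degree required is $\max\{2, 1, P(t+1)+1\} = P(t+1)+1$, provided $P$ is nontrivial so that $P(t+1) \geq 1$. The main piece of bookkeeping is verifying that the multiplication map $u$ faithfully realizes $S_1 \cdot M$ as $u_A(M^{r+1})$ and that the Fitting index lines up correctly; no conceptual difficulty is expected beyond this indexing check.
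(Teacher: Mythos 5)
Your proposal is correct and follows essentially the same route as the paper: the paper's proof likewise combines \Cref{thm:explicit construction of Hilb X}, \Cref{thm:bound of H} applied to the multiplication map $u(f_0,\dots,f_r)=x_0f_0+\dots+x_rf_r$, and \Cref{thm:bound of H2}, together with the quadratic Plücker relations. Your explicit identification $W=R_{t+1}$, $\dim W - m = Q(t+1)-1$, hence $m = P(t+1)+1$, is exactly the bookkeeping the paper leaves implicit.
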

\begin{proof}
  
The Pl\"ucker relations are quadratic \cite[p. 65]{Harris}. Thus, it follows from \Cref{thm:explicit construction of Hilb X}, \Cref{thm:bound of H} with $u(f_0,f_1,\dots,f_r) = x_0f_0+x_1f_1 +\dots+x_r f_r$ and \Cref{thm:bound of H2}.
\end{proof}

Therefore, an upper bound on Gotzmann numbers will give an explicit construction of the Hilbert scheme. Such a bound is given by Hoa {\cite[Theorem~6.4(i)]{Hoa}}.
 
\begin{thm}[Hoa]\label{thm:Gotzmann bound}
  Let $I\subset k[x_0,\dots,x_r]$ be a nonzero ideal generated by homogeneous polynomials of degree at most $d \geq 2$. Let $b$ be the Krull dimension and $c = r+1-b$ be the codimension of $k[x_0,\dots,x_r]/I$. Then
  \[ \varphi(\HP_I) \leq \left( \frac{3}{2} d^c + d \right)^{b 2^{b-1}}. \]
\end{thm}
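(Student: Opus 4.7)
Since this is attributed to Hoa and cited from \cite[Theorem~6.4(i)]{Hoa}, the honest answer is that I would not reprove it in the paper but simply invoke the reference. Nevertheless, here is the shape of the argument I would try to reconstruct, and what the main difficulties would be.

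The plan is to replace the Gotzmann-number question by a Castelnuovo--Mumford regularity question. By Gotzmann's persistence theorem together with the fact that the lex-segment ideal with a given Hilbert polynomial achieves the maximum regularity among all saturated ideals with that polynomial, it suffices to bound $\mathrm{reg}(R/I)$ for $R = k[x_0,\dots,x_r]$ when $I$ is generated in degree $\leq d$. The key reduction is to bound this regularity by induction on the Krull dimension $b$ of $R/I$, using generic hyperplane sections.

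For the inductive step, I would choose a sufficiently generic linear form $\ell$. The short exact sequence
\[ 0 \rightarrow (R/I)(-1) \xrightarrow{\,\cdot\ell\,} R/I \rightarrow R/(I+(\ell)) \rightarrow 0 \]
together with the standard regularity inequality
\[ \mathrm{reg}(R/I) \leq \mathrm{reg}(R/(I+(\ell))) + \mathrm{reg}\bigl(\bigl((I : \ell)/I\bigr)\bigr) + 1 \]
lets one trade a bound in dimension $b$ for one in dimension $b-1$, at the cost of roughly squaring the estimate because the second term must also be controlled inductively (it is annihilated by $\ell$, so lives in dimension $<b$). This squaring at each step is precisely what produces the $2^{b-1}$ exponent in Hoa's bound. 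The codimension enters via the base case: when $R/I$ is Artinian (so $b = 0$ and generators are in degree $\leq d$ in $c = r+1$ variables), a Bezout-type argument bounds the multiplicity by $d^c$ and the socle degree by something of order $d^c + d$, which seeds the recursion with the $\tfrac{3}{2}d^c + d$ factor.

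The main obstacle is the dimension-$1$ to dimension-$0$ transition and the careful bookkeeping of the "extra" regularity contributed by the kernel of multiplication by $\ell$; a naive inductive bound overshoots because $(I:\ell)/I$ can have generators in degree close to $\mathrm{reg}(R/I)$ itself, so one has to package the induction over a class of ideals (e.g.\ those generated in degree $\leq$ some function of the current regularity) rather than over individual ideals. A secondary technical issue is that the generic hyperplane section $\ell$ must be chosen so that depth and codimension behave well; this is handled by passing to the generic initial ideal (Galligo's theorem), where the reverse-lexicographic-generic initial ideal has particularly clean behavior with respect to the last variable $x_r$. Rather than expanding these subtleties, the paper simply cites \cite[Theorem~6.4(i)]{Hoa}.
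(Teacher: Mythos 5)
The paper gives no proof of this theorem at all: it is quoted verbatim as an external result of Hoa, cited as \cite[Theorem~6.4(i)]{Hoa}, which is exactly what you say you would do. Your additional sketch of how the bound might be established (reduction to regularity, induction on dimension via generic hyperplane sections, the doubling responsible for the $2^{b-1}$ exponent) is a plausible reconstruction, but there is nothing in the paper to compare it against, so the citation alone is the correct and complete answer here.
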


Now, we are ready to give an upper bound:

\NNSbound
\begin{proof}
  If $X$ is a projective space or $\dim X \leq 1$, then $(\NNS X)_\tor$ is trivial. Therefore, we may assume that $d \geq 2$, $2 \leq \dim X \leq r-1$. Let $n = dr$ and $m = (dr)^{r^2 2^{r-1}}$. Then $n \geq (d-1)\codim X$, and \Cref{thm:Gotzmann bound} with $2 \leq b \leq r$ implies that
  \begin{align*}
    \max\{ \varphi(nH), \varphi(X)\}
    &\leq \left(\frac{3}{2}(dr)^{r-1}+dr \right)^{r2^{r-1}} \leq (dr)^{r^2 2^{r-1}} = m.
  \end{align*}
  Therefore, \Cref{thm:bound NS by Hilb} implies that
  \[\order (\NNS X)_\tor \leq 
    \dim_k \Gamma\left(\Hilb_{\HP_{mH}} X\right). \]
  Let $t = (dr)^{r^4 2^{2r-2}}$. \Cref{thm:Gotzmann bound} implies that
  \begin{align*}
    \max\{ \varphi(mH), d\}
    &\leq \left(\frac{3}{2}m^{r-1}+m \right)^{r2^{r-1}} \leq m^{r^2 2^{r-1}} = t.
  \end{align*}
  Therefore, \Cref{lem:Hilb degree} implies that $\Hilb_{\HP_{mH}} X$ is defined by polynomials of degree at most $D = P(t+1)+1$ in $\mathbb{P}(\bigwedge^{P(t)}R_t)$. Let $N = \dim_k \bigwedge^{P(t)}R_t$, and let $P$ be the Hilbert polynomial of $\mathscr{I}_{mH}$. Because $r \geq 3$ and $t \geq 6^{1295}dr$,
  \[ D = P(t+1)+1 \leq \binom{t+r+1}{r} +1 \leq t^r. \]
  and
  \[ N = \binom{\dim R_t}{P(t)} \leq 2^{\dim R_t} = 2^{\binom{t+r}{r}} \leq 2^{t^r/4}. \]
  Therefore, \Cref{thm:bound NS by Hilb} and \Cref{thm:bound on Gamma} implies that
  \[\order(\NNS X)_{\tor} \leq \Gamma\left(\Hilb_{\HP_{mH}} X\right) \leq 2^N(D^2/2+D)^{N2^{N-1}} \leq (2D)^{N2^N}. \]
  Furthermore,
  \begin{align*}
    \log_2 (2D)^{N2^N}
    &\leq N2^N(1 + \log_2 D) \leq N \cdot 2^N \cdot (1 + r\log_2 t) \leq 2^N \cdot 2^N \cdot 2^{t^r/2} \leq 2^{2^{t^r}}\\
    \log_d \log_2 \log_2 2^{2^{t^r}}
    &= r \log_d t = r^52^{2r-2}(1+\log_d r) \leq r^62^{2r-2}.
  \end{align*}
  As a resulut,
  \begin{equation}\label{eqn:better NNS bound}
    \# (\NNS X)_{\tor} \leq \exp_2\exp_2\exp_2\exp_d\exp_2(2r+6\log_2 r-2).
    \qedhere
  \end{equation}
\end{proof}

In the rest of this section, we drop the condition that $X$ is connected. Let $Y_0,Y_1,\dots,Y_{n-1}$ be the connected components of $X$.

\begin{thm}
  Let $X \hookrightarrow \mathbb{P}^r$ be a smooth projective variety defined by homogeneous polynomials of degree $\leq d$. Then
  \[ \#(\NNS X)_{\tor} \leq  \exp_2\exp_2\exp_2\exp_d\exp_2(2r+7\log_2 r). \]
\end{thm}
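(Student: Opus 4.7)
My plan is to reduce to the connected Theorem \ref{thm:NNS bound} by decomposing $X$ into its connected components $Y_0,\dots,Y_{n-1}$. Since $X$ is smooth, each $Y_i$ is itself a smooth connected projective variety. The Picard functor commutes with finite disjoint unions, so $\Pic X \cong \prod_i \Pic Y_i$; this implies $(\Pic^0 X)_{\red} \cong \prod_i (\Pic^0 Y_i)_{\red}$ and hence $\NNS X \cong \prod_i \NNS Y_i$. Taking torsion subgroup schemes commutes with finite products of commutative group schemes, so $\#(\NNS X)_{\tor} = \prod_{i} \#(\NNS Y_i)_{\tor}$. Components with $\dim Y_i \leq 1$ or $Y_i \cong \mathbb{P}^{\dim Y_i}$ contribute a factor of $1$, matching the trivial reductions already made in the proof of Theorem~\ref{thm:NNS bound}.

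The key technical step is bounding the defining degree $d_i$ of $Y_i$ as a closed subscheme of $\mathbb{P}^r$. The plan is to combine the Andreotti--B\'ezout inequality $\deg Y_i \leq \deg X \leq d^r$ with a Castelnuovo--Mumford regularity bound for smooth projective subvarieties of $\mathbb{P}^r$ expressed in terms of degree, to conclude $d_i \leq d^r$ (possibly after absorbing a polynomial-in-$r$ factor into the slack below). This is the main obstacle, since the regularity estimate must be valid in arbitrary characteristic.

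Granted that bound, Theorem~\ref{thm:NNS bound} applied to $Y_i$ with its sharper estimate (\ref{eqn:better NNS bound}) yields
\[
  \#(\NNS Y_i)_{\tor} \leq \exp_2\exp_2\exp_2\exp_{d^r}\exp_2(2r+6\log_2 r-2).
\]
Using $\exp_{d^r}(x) = \exp_d(rx)$ and $r \cdot 2^{2r+6\log_2 r - 2} = 2^{2r+7\log_2 r - 2}$, this simplifies to
\[
  \#(\NNS Y_i)_{\tor} \leq \exp_2\exp_2\exp_2\exp_d\exp_2(2r+7\log_2 r-2).
\]

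To finish, I would bound the number of components by $n \leq \dim_k \Gamma(X, \mathcal{O}_X) \leq 2^r(d^2/2+d)^{r 2^{r-1}}$ via Theorem~\ref{thm:bound on Gamma}, which is at most doubly exponential in $r$. A direct iterated-logarithm calculation shows that the gap between $\exp_2\exp_2\exp_d\exp_2(2r+7\log_2 r - 2)$ and $\exp_2\exp_2\exp_d\exp_2(2r+7\log_2 r)$ is triply exponential in $r$, hence easily dominates $\log_2 n$. Therefore
\[
  \#(\NNS X)_{\tor} = \prod_i \#(\NNS Y_i)_{\tor} \leq \left(\exp_2\exp_2\exp_2\exp_d\exp_2(2r+7\log_2 r - 2)\right)^n \leq \exp_2\exp_2\exp_2\exp_d\exp_2(2r+7\log_2 r),
\]
which is the desired estimate.
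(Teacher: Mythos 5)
Your overall strategy---decompose $X$ into its connected components $Y_i$, use $\Pic X \cong \prod_i \Pic Y_i$ to get $\#(\NNS X)_\tor = \prod_i \#(\NNS Y_i)_\tor$, bound the defining degree of each $Y_i$ by $d^r$, and apply the sharper connected-case estimate (\ref{eqn:better NNS bound}) with $d$ replaced by $d^r$---is exactly the paper's, and your closing arithmetic (the identity $\exp_{d^r}(x)=\exp_d(rx)$ and the absorption of the factor $n$ into the slack between $2r+7\log_2 r-2$ and $2r+7\log_2 r$) is sound; the paper bounds $n\leq d^r$ directly by the Andreotti--B\'ezout inequality rather than via $\dim_k\Gamma(X,\mathcal{O}_X)$ and \Cref{thm:bound on Gamma}, but either works. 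The one step you leave open, however, is a genuine gap, and it is the step that carries the content of the reduction.

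Concretely, you need each $Y_i$ to be \emph{defined by} homogeneous polynomials of degree at most $d^r$, since that is the hypothesis of \Cref{thm:NNS bound}. Your proposed route---a Castelnuovo--Mumford regularity bound for smooth projective subvarieties of $\mathbb{P}^r$ that is polynomial in $\deg Y_i$ and valid in arbitrary characteristic---is not available in the literature: Eisenbud--Goto-type bounds (regularity controlled by the degree) are known for curves and for smooth varieties of low dimension in characteristic zero, but no polynomial-in-degree regularity bound is known for smooth varieties of arbitrary dimension over a field of positive characteristic, which is precisely the case this paper cares about; the characteristic-free bounds that do exist are stated in terms of the degrees of generators of the ideal, i.e.\ the very quantity you are trying to produce. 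The paper sidesteps regularity entirely by citing Heintz \cite[Proposition~3]{Hei}: an irreducible closed subvariety of $\mathbb{P}^r$ of degree $\delta$ is defined by homogeneous polynomials of degree at most $\delta$. Since each $Y_i$ is irreducible (smooth and connected) and $\deg Y_i \leq d^r$ by Andreotti--B\'ezout, this gives the required bound with no smoothness or characteristic input. Substituting that citation for your regularity step makes your argument complete and essentially identical to the paper's.
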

\begin{proof}
  Since
  \[\Pic X = \Pic Y_0 \times \Pic Y_1 \dots \times \Pic Y_{n-1}, \]
  we have
  \[ (\NNS X)_\tor = (\NNS Y_0)_\tor \times (\NNS Y_1)_\tor \dots \times (\NNS Y_{n-1})_\tor. \]
  The Andreotti--Bézout inequality \cite[Lemma~1.28]{Cat} implies that $n \leq d^r$ and $\deg Y_i \leq d^r$ for every $i$. Moreover, every $Y_i$ is defined by homogeneous polynomials of degree at most $\deg Y_i$ by \cite[Proposition~3]{Hei}. Without loss of generality, we may assume that
  \[\#(\NNS Y_0)_\tor = \max \{ \#(\NNS Y_i)_\tor \,|\, 0 \leq i < n\}.\]
  Then

\begin{align*}
  \# (\NNS X)_\tor
  &\leq \left(\# (\NNS Y_0)_\tor\right)^{d^r}\\
  &\leq \left(\exp_2\exp_2\exp_2\exp_{d^r}\exp_2(2r+6\log_2 r-2)\right)^{d^r} \text{ (by (\ref{eqn:better NNS bound}))}\\
  &\leq \exp_2\exp_2\exp_2\left((d^r)^{r^6 2^{2r-2}}{d^r}\right)\\
  &\leq \exp_2\exp_2\exp_2\exp_d \left(r^7 2^{2r}\right)\\
  &= \exp_2\exp_2\exp_2\exp_d\exp_2(2r+7\log_2 r). \qedhere
\end{align*}  
\end{proof}

\section{Application to Fundamental groups}\label{sec:fundamental group}
Let $X$ be a smooth connected projective variety with base point $x_0 \in X(k)$. If $k = \mathbb{C}$, then the torsion abelian group $(\pic X)_\tor$ is the dual of the profinite abelian group $\pi^\et_1(X,x_0)^\ab$. More explicitly,
\[ \pi^\et_1(X,x_0)^\ab = \Hom((\pic^\tau X)_\tor,\mathbb{Q}/\mathbb{Z}) = \varprojlim_{n > 0} (\pic^\tau X)[n]^\dual. \]
This can be generalized to algebraically closed fields $k$ of arbitrary characteristic by using $\Pic^\tau X$ and Nori's fundamental group scheme $\piNr(X,x_0)$. Before proceeding, note that $\pi^\et_1(X,x_0)^\ab = \piNr(X,x_0)^\ab(k)$ due to \cite[Lemma~3.1]{EHS}.
\begin{thm}[Antei {\cite[Proposition 3.4]{Ant}}]\label{thm:pi pic duality}
  The commutative torsion group scheme $(\Pic^\tau X)_\tor$ is the Cartier dual of the commutative profinite group scheme $\piNr(X,x_0)^\ab$. More precisely,
  \[  \piNr(X,x_0)^\ab = \varprojlim_{n > 0} (\Pic^\tau X)[n]^\dual.\]
\end{thm}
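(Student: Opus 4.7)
The plan is to prove both sides corepresent the same functor on the category of finite commutative $k$-group schemes and then conclude by Yoneda (together with Cartier duality between pro-finite and ind-finite commutative group schemes).

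\smallskip

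\emph{Step 1 (Nori's universal property).} By the defining universal property of $\piNr(X,x_0)$, for any finite $k$-group scheme $G$, morphisms $\piNr(X,x_0)\to G$ are in natural bijection with isomorphism classes of pointed $G$-torsors on $X$. When $G$ is commutative, every such morphism factors uniquely through the abelianization, so
\[ \Hom\bigl(\piNr(X,x_0)^{\ab},G\bigr) \;=\; \bigl\{\text{pointed } G\text{-torsors on } X\bigr\}. \]

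\emph{Step 2 (Torsors vs. $\Pic^\tau X$).} For finite commutative $G$, a pointed $G$-torsor $T\to X$ produces, functorially in each character $\chi\in G^\dual(A)$, an associated line bundle $T\times^{G,\chi}\mathbb{A}^{1}$ on $X_A$ rigidified at $x_0$. This is precisely a morphism of $k$-group schemes $G^\dual \to \Pic X$. Since $G^\dual$ is finite, its image is annihilated by the order of $G^\dual$, so it lies in $(\Pic X)_\tor$; moreover any connected component of $\Pic X$ containing a torsion point projects to a torsion class in $\NS X$, so by definition it lies in $\Pic^\tau X$. Conversely, a morphism $G^\dual\to(\Pic^\tau X)_\tor$ specifies, by descent along $G^\dual$, a unique pointed $G$-torsor. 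This gives a natural bijection
\[ \bigl\{\text{pointed } G\text{-torsors on } X\bigr\} \;=\; \Hom\bigl(G^\dual,(\Pic^\tau X)_\tor\bigr). \]

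\emph{Step 3 (Yoneda and dualising back).} Combining Steps 1 and 2 with the Cartier duality isomorphism $\Hom(\piNr(X,x_0)^{\ab},G)=\Hom\bigl(G^\dual,(\piNr(X,x_0)^{\ab})^\dual\bigr)$ (which holds for all finite commutative group schemes) gives, functorially in $G$,
\[ \Hom\bigl(G^\dual,(\piNr(X,x_0)^{\ab})^\dual\bigr) \;\cong\; \Hom\bigl(G^\dual,(\Pic^\tau X)_\tor\bigr). \]
As $G$ ranges over all finite commutative $k$-group schemes, so does $G^\dual$, and the two sides are filtered unions indexed by the subobjects killed by $n$. By Yoneda in the ind-category,
\[ (\piNr(X,x_0)^{\ab})^\dual \;\cong\; (\Pic^\tau X)_\tor \;=\; \varinjlim_n (\Pic^\tau X)[n]. \]
Taking Cartier duals of this direct system (which for a pro-finite group scheme is the inverse limit of the duals of its finite quotients by $n$) yields
\[ \piNr(X,x_0)^{\ab} \;=\; \varprojlim_{n>0} (\Pic^\tau X)[n]^\dual, \]
which is the claimed description.

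\smallskip

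\emph{Main obstacle.} The delicate point is making Step 2 genuinely functorial and natural in $G$ — including for local finite group schemes such as $\mu_p$ or $\alpha_p$ in positive characteristic, where the torsor/line-bundle dictionary must be carried out in the fppf topology and with attention to rigidifications at $x_0$ so that no residual $\pic^\tau X$-ambiguity appears in the correspondence. Once naturality in $G$ is in hand, the rest is a formal Yoneda and Cartier-duality calculation.
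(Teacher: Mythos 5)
The paper does not actually prove this statement: it is quoted from Antei \cite[Proposition 3.4]{Ant} (which in turn rests on Nori's work), so there is no internal argument to compare yours against. Your outline follows the standard route to the result --- Nori's universal property, the dictionary between pointed $G$-torsors and homomorphisms $G^\dual\to\Pic^\tau X$, then Yoneda and Cartier duality for ind-/pro-finite commutative group schemes --- and Steps 1 and 3 are essentially fine. (In Step 3 you should note explicitly that $\Hom(G^\dual,(\Pic^\tau X)_\tor)=\Hom(G^\dual,\Pic^\tau X)$ because a finite commutative group scheme is killed by its order, and that each $(\Pic^\tau X)[n]$ is finite because $\Pic^\tau X$ is proper with identity component an extension of a finite group scheme by an abelian variety; both are easy.)

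The gap is that Step 2 \emph{is} the theorem. The direction you sketch (torsor gives homomorphism via associated line bundles, landing in torsion and hence in $\Pic^\tau X$) is correct, but the inverse --- ``a morphism $G^\dual\to(\Pic^\tau X)_\tor$ specifies, by descent along $G^\dual$, a unique pointed $G$-torsor'' --- is asserted rather than proved, and ``descent along $G^\dual$'' does not literally make sense when $G^\dual$ is infinitesimal (e.g.\ $G=\mu_p$ or $G=\boldsymbol{\alpha}_p$ in characteristic $p$), which is precisely the case for which the paper needs this theorem. The actual construction pulls back the Poincar\'e sheaf on $X\times\Pic X$, rigidified along $\{x_0\}\times\Pic X$ (this is where $x_0\in X(k)$ and properness of $X$ enter), along $\id\times\varphi$ to $X\times G^\dual$, uses the homomorphism property of $\varphi$ together with the rigidification to endow the pushforward to $X$ with a $G^\dual$-graded $\mathcal{O}_X$-algebra structure, and takes relative $\Spec$; one must then verify that this is quasi-inverse to the associated-bundle construction and natural in $G$. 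You flag exactly this naturality as the ``main obstacle'' but do not resolve it, so as written the proposal reduces the cited proposition to an equivalent unproved claim rather than establishing it.
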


If $k = \mathbb{C}$, then $\pi^\et_1(X,x_0)^\ab_\tor \simeq (\NS X)_\tor^\dual$. We will show that $\piNr(X,x_0)^\ab_\tor \simeq (\NNS X)_\tor^\dual$ for general base fields. Then \Cref{thm:NNS bound} gives an upper bound on $\#\piNr(X,x_0)^\ab_\tor$.

\begin{lem}\label{lem:group lem 0}
  Let $A$ be a proper commutative group scheme. Let $A^0$ be the identity component of $A$. Then for every large and divisible $m$, we have $m A_\tor = A^0_{\red,\tor}$.
\end{lem}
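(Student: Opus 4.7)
The plan is to identify how much one must multiply $A$ by in order to land inside $A^0_{\red}$, and then to exploit that $[m]$ is already an isogeny on the abelian variety $A^0_{\red}$. Since $A$ is proper, it has only finitely many connected components, so $\pi_0(A) \coloneqq A/A^0$ is a finite étale $k$-group scheme; write $N$ for its order. Over the perfect field $k$, the reduction $A^0_{\red}$ is a closed subgroup scheme of $A^0$, and being reduced, connected, commutative, and proper, it is an abelian variety. The quotient $A^0/A^0_{\red}$ is therefore a finite infinitesimal commutative group scheme; let $M$ be its order (so $M = 1$ in characteristic $0$ and a power of $\charac k$ otherwise). By the standard fact that a finite commutative group scheme is annihilated by its order, $[N]\colon A \to A$ factors through $A^0$ and $[M]\colon A^0 \to A^0$ factors through $A^0_{\red}$; composing, $[NM]\colon A \to A$ factors through $A^0_{\red}$.

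With this setup, the inclusion $m A_\tor \subseteq A^0_{\red,\tor}$ is immediate whenever $NM \mid m$: the image of $[m]\colon A \to A$ lies in $A^0_{\red}$, and $[m]$ preserves torsion, so $m A_\tor \subseteq A^0_{\red} \cap A_\tor = A^0_{\red,\tor}$. For the reverse inclusion, observe that for any $m > 0$ the multiplication map $[m]\colon A^0_{\red} \to A^0_{\red}$ is an isogeny of abelian varieties, in particular faithfully flat, so an fppf epimorphism. Moreover, if on an fppf cover a local lift $a$ satisfies $ma = b$ with $b \in A^0_{\red,\tor}$ killed by $n$, then $nm \cdot a = 0$, so $a$ is itself torsion; hence $[m]$ surjects $A^0_{\red,\tor}$ onto itself. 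This gives $A^0_{\red,\tor} = m \cdot A^0_{\red,\tor} \subseteq m A_\tor$.

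Putting the two inclusions together yields $m A_\tor = A^0_{\red,\tor}$ for every $m$ divisible by $NM$, which is what the phrase ``large and divisible'' should mean. The only mildly subtle input is the annihilation theorem for finite commutative group schemes; everything else is essentially bookkeeping with the Chevalley-type filtration $A^0_{\red} \subseteq A^0 \subseteq A$.
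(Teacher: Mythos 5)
Your proof is correct and follows essentially the same route as the paper: the paper likewise observes that $A/A^0_\red$ is a finite group scheme (hence killed by a suitable $m$, giving $mA \subseteq A^0_\red$) and that multiplication by $m$ is surjective on the abelian variety $A^0_\red$. Your splitting of the finite quotient into the \'etale part $A/A^0$ and the infinitesimal part $A^0/A^0_\red$ is an unnecessary but harmless refinement, and your explicit check that $[m]$ carries $A^0_{\red,\tor}$ onto itself fills in a step the paper leaves implicit.
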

\begin{proof}
  Notice that $A^0_\red$ is an abelian variety and $A/A^0_\red$ is a finite group scheme. Suppose that $m$ divides $\#(A/A^0_\red)$. Since multiplication by m is surjective on the abelian variety $A^0_\red$, we get $mA = A^0_\red$. As a result, $m A = A^0_\red$, meaning that $m A_\tor = A^0_{\red,\tor}$.
\end{proof}

\begin{lem}\label{lem:group lem 1}
  Let $A$ be a commutative group scheme of finite type. Let $B$ be a subgroup scheme of $A$. Suppose that $B$ is an abelian variety. Then for every positive integer $n$, the natural morphism
  \[ \varphi\colon \frac{A[n]}{B[n]} \rightarrow \frac{A}{B}[n] \]
  is an isomorphism. 
\end{lem}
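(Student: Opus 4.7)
The plan is to apply the snake lemma in the abelian category of fppf sheaves of abelian groups on $\Spec k$ to the short exact sequence
\[ 0 \to B \to A \to A/B \to 0 \]
with the vertical maps being multiplication by $n$. Since $\varphi$ is manifestly injective (it is the induced map on a quotient from an injection of sheaves), the content of the lemma is surjectivity of $A[n] \to (A/B)[n]$, and that surjectivity is controlled by the cokernel of $[n]\colon B \to B$.

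First, I would extract from the snake lemma the six-term exact sequence
\[ 0 \to B[n] \to A[n] \to (A/B)[n] \to B/nB \to A/nA \to (A/B)/n(A/B) \to 0 \]
of fppf sheaves. Then it suffices to show that the connecting map $(A/B)[n]\to B/nB$ vanishes, and for this it is enough to prove $B/nB=0$.

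Next, I would invoke the hypothesis that $B$ is an abelian variety: the multiplication-by-$n$ morphism $[n]\colon B \to B$ is a finite flat (in fact \'etale when $n$ is invertible in $k$, and at worst finite faithfully flat in general) surjection, hence an epimorphism in the fppf sheaf category. Consequently $B/nB = 0$, the connecting morphism vanishes, and $A[n] \to (A/B)[n]$ is an epimorphism of fppf sheaves, so $\varphi$ is an isomorphism of fppf sheaves. Because $B[n]\subset A[n]$ is a closed subgroup scheme of the finite-type group scheme $A[n]$, the quotient $A[n]/B[n]$ is representable, and $(A/B)[n]$ is representable as a closed subgroup scheme of $A/B$; so $\varphi$ is an isomorphism of group schemes.

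The only place where care is needed is the foundational framing: one must work in the fppf topology so that the snake lemma applies and so that the quotients $A/B$ and $A[n]/B[n]$ are well-behaved representable objects. The group-theoretic content is just the standard fact that multiplication by $n$ is a surjective isogeny on an abelian variety.
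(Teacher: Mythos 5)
Your proof is correct and follows essentially the same route as the paper: apply the snake lemma to the multiplication-by-$n$ endomorphism of the short exact sequence $0 \to B \to A \to A/B \to 0$, extract the exact sequence $0 \to B[n] \to A[n] \to (A/B)[n] \to B/nB$, and conclude because $B/nB = 0$ for an abelian variety $B$. The extra care you take about the fppf-sheaf framing and representability is a reasonable elaboration of what the paper leaves implicit.
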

\begin{proof}
  By the snake lemma,
  \[\begin{tikzcd}
      0 \arrow[r] & B\arrow[r]\arrow[d,"\times n"] & A\arrow[r]\arrow[d,"\times n"]
      & A/B\arrow[r]\arrow[d,"\times n"] & 0\\
      0 \arrow[r] & B\arrow[r]                     & A\arrow[r]
      & A/B\arrow[r]                     & 0
    \end{tikzcd}\]
  gives the exact sequence
  \[ 0 \longrightarrow B[n] \longrightarrow A[n] \longrightarrow \frac{A}{B}[n] \longrightarrow \frac{B}{nB}. \]
  Since $B$ is an abelian variety, we have $B/nB$ = 0. 
\end{proof}


\begin{thm}\label{cor:pi et NS comparison}
  Let $X$ be a smooth connected projective variety with base point $x_0 \in X(k)$. Then
  \begin{align*}
    \piNr(X,x_0)^\ab_\tor &\simeq (\NNS X)_\tor^\dual.
  \end{align*}
\end{thm}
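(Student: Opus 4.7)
The plan is to identify $\piNr(X,x_0)^\ab[n]$ with $(\NNS X)_\tor^\dual$ for every sufficiently divisible $n$, using \Cref{thm:pi pic duality} together with \Cref{lem:group lem 0}, and then pass to the direct limit.

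First I establish the short exact sequence
\[ 0 \to (\Pic^0 X)_\red \to \Pic^\tau X \to (\NNS X)_\tor \to 0. \]
Since $\Pic^0 X/(\Pic^0 X)_\red$ is infinitesimal (hence torsion), an element of $\NNS X = \Pic X/(\Pic^0 X)_\red$ is torsion if and only if its image in $\NS X$ is torsion; equivalently, the image of $\Pic^\tau X$ in $\NNS X$ is exactly $(\NNS X)_\tor$, with kernel $(\Pic^0 X)_\red$.

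Next, under \Cref{thm:pi pic duality}, applying Cartier duality to the right-exact sequence
\[ (\Pic^\tau X)_\tor \xrightarrow{[n]} (\Pic^\tau X)_\tor \to (\Pic^\tau X)_\tor/n(\Pic^\tau X)_\tor \to 0 \]
gives $\piNr(X,x_0)^\ab[n] \simeq \bigl((\Pic^\tau X)_\tor/n(\Pic^\tau X)_\tor\bigr)^\dual$. By \Cref{lem:group lem 0} applied to $A = \Pic^\tau X$, for every $n$ divisible by $\order(\NNS X)_\tor$ we have $n(\Pic^\tau X)_\tor = ((\Pic^0 X)_\red)_\tor$, so the sequence from the first paragraph (restricted to torsion) identifies the quotient $(\Pic^\tau X)_\tor/n(\Pic^\tau X)_\tor$ with $(\NNS X)_\tor$. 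Hence
\[ \piNr(X,x_0)^\ab[n] \simeq (\NNS X)_\tor^\dual \]
for every such $n$.

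Finally, taking the direct limit over the cofinal set of such $n$ yields
\[ \piNr(X,x_0)^\ab_\tor = \varinjlim_n \piNr(X,x_0)^\ab[n] = (\NNS X)_\tor^\dual, \]
as required. The main subtlety is unwinding the duality in the middle paragraph: one must verify that Cartier duality carries the $n$-torsion on one side to the Cartier dual of the cokernel of $[n]$ on the other, which is formal once we write $\piNr(X,x_0)^\ab = \varprojlim_n (\Pic^\tau X)[n]^\dual$ and $(\Pic^\tau X)_\tor = \varinjlim_n (\Pic^\tau X)[n]$, and note that $[n]$ on the inverse limit is dual to $[n]$ on the direct limit.
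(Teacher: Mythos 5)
Your argument is correct and takes essentially the same route as the paper: it rests on the same two inputs, Antei's duality (\Cref{thm:pi pic duality}) and \Cref{lem:group lem 0}, with the paper's long chain of limit interchanges reorganized by stabilizing at a single sufficiently divisible $n$. The only step worth making explicit is that your identification $(\Pic^\tau X)_\tor/(\Pic^0 X)_{\red,\tor}\simeq(\NNS X)_\tor$ needs the restriction of the short exact sequence $0\to(\Pic^0 X)_\red\to\Pic^\tau X\to(\NNS X)_\tor\to 0$ to torsion to remain right-exact, which holds because $(\Pic^0 X)_\red$ is a divisible abelian variety --- this is precisely the content of the paper's \Cref{lem:group lem 1}.
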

\begin{proof}
  Keep in mind that limits commute with kernels and colimits commute with cokernels. Keep in mind that the Cartier duality is a contravariant equivalence. \Cref{thm:pi pic duality} implies that
  \begingroup
  \allowdisplaybreaks
  \begin{align*}
    \piNr(X,x_0)^\ab_\tor
    &= \varinjlim_{m>0} \left(\varprojlim_{n > 0} (\Pic^\tau X)[n]^\dual\right)[m] \\
    &= \varinjlim_{m>0} \varprojlim_{n > 0} \left((\Pic^\tau X)[n]^\dual[m]\right) \\
    &= \varinjlim_{m>0} \varprojlim_{n > 0} \left(\frac{(\Pic^\tau X)[n]}{m(\Pic^\tau X)[n]}\right)^\dual \\
    &= \varinjlim_{m>0} \left(\varinjlim_{n > 0}\frac{(\Pic^\tau X)[n]}{m(\Pic^\tau X)[n]}\right)^\dual \\
    &= \varinjlim_{m>0} \left(\frac{\varinjlim_{n > 0}(\Pic^\tau X)[n]}{\varinjlim_{n > 0}m(\Pic^\tau X)[n]}\right)^\dual \\
    &= \varinjlim_{m>0}\left(\frac{(\Pic^\tau X)_\tor}{m(\Pic^\tau X)_\tor}\right)^\dual \\
    &= \left(\varprojlim_{m>0}\frac{(\Pic^\tau X)_\tor}{m(\Pic^\tau X)_\tor}\right)^\dual \\
    &= \left(\frac{(\Pic^\tau X)_\tor}{(\Pic^0 X)_{\red,\tor}}\right)^\dual \text{ (by \Cref{lem:group lem 0})} \\
    &= \left(\frac{\varinjlim_{n > 0}(\Pic^\tau X)[n]}{\varinjlim_{n > 0}(\Pic^0 X)_\red[n]}\right)^\dual \\
    &= \left(\varinjlim_{n > 0}\frac{(\Pic^\tau X)[n]}{(\Pic^0 X)_\red[n]}\right)^\dual \\
    &= \left(\varinjlim_{n > 0}(\NNS X)_\tor [n]\right)^\dual \text{ (by \Cref{lem:group lem 1})}\\
    &= (\NNS X)_\tor^\dual. \qedhere
  \end{align*}
  \endgroup
\end{proof}

Therefore, we obtain an upper bound on $\# \piNr(X,x_0)^\ab_\tor$.

\begin{thm}\label{thm:nr bound}
  Let $X \hookrightarrow \mathbb{P}^r$ be a smooth connected projective variety defined by homogeneous polynomials of degree $\leq d$ with base point $x_0 \in X(k)$. Then
  \[ \# \piNr(X,x_0)^\ab_\tor \leq \exp_2\exp_2\exp_2\exp_d\exp_2(2r+6 \log_2 r). \]
\end{thm}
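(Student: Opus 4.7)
The plan is to reduce this statement directly to the bound already established in Theorem~\ref{thm:NNS bound} via the Cartier-duality identification from Theorem~\ref{cor:pi et NS comparison}. First I would invoke Theorem~\ref{cor:pi et NS comparison} to obtain the isomorphism of abstract groups
\[ \piNr(X,x_0)^\ab_\tor \simeq (\NNS X)_\tor^\dual, \]
where the right-hand side denotes the group of $k$-points of the Cartier dual of the finite commutative $k$-group scheme $(\NNS X)_\tor$.

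Next I would use the fact that Cartier duality preserves the order of a finite commutative group scheme, so
\[ \# (\NNS X)_\tor^\dual = \# (\NNS X)_\tor. \]
(More concretely, if $G = \Spec A$ is finite of order $n = \dim_k A$, then its Cartier dual $G^\dual = \Spec A^\dual$ also has $\dim_k A^\dual = n$.) Combining this with the previous isomorphism yields
\[ \# \piNr(X,x_0)^\ab_\tor = \#(\NNS X)_\tor. \]

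Finally I would apply Theorem~\ref{thm:NNS bound}, which bounds $\#(\NNS X)_\tor$ by $\exp_2\exp_2\exp_2\exp_d\exp_2(2r+6\log_2 r)$ under exactly the hypothesis that $X \hookrightarrow \mathbb{P}^r$ is smooth, connected and defined by polynomials of degree $\leq d$. Chaining these three equalities/inequalities gives the desired bound. There is no substantive obstacle here: all the real work has already been done in Sections~\ref{sec:numerical conditions}--\ref{sec:hilbert scheme} and in the comparison result Theorem~\ref{cor:pi et NS comparison}; the only thing to verify is the harmless observation that Cartier duality is order-preserving on finite commutative group schemes, which is immediate from the definition.
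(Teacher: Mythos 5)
Your proposal is correct and is essentially identical to the paper's own proof, which likewise deduces $\# \piNr(X,x_0)^\ab_\tor = \#(\NNS X)_\tor$ from Theorem~\ref{cor:pi et NS comparison} and then cites Theorem~\ref{thm:NNS bound}. The only caveat is that the comparison theorem is an isomorphism of finite group \emph{schemes} (not merely of groups of $k$-points), which is exactly what makes your order-preservation step under Cartier duality apply; with that reading your argument matches the paper's.
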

\begin{proof}
  \Cref{cor:pi et NS comparison} implies that $\# \piNr(X,x_0)^\ab_\tor = \# (\NNS X)_\tor$. Therefore, the bound follows from \Cref{thm:NNS bound}.
\end{proof}

 Suppose that $\charac k = p > 0$ and $\ell \neq p$ is a prime number. Since $\pi^\et_1(X,x_0)^\ab = \piNr(X,x_0)^\ab(k)$, \Cref{cor:pi et NS comparison} implies that $\pi^\et_1(X,x_0)^\ab[\ell^\infty] = (\NS X)[\ell^\infty]^\dual$. However, this is not true for $p$-power torsions. Hence, a bound on $\# (\NS X)_\tor$ such as \cite[Theorem~4.12]{Kwe} does not give a bound on $\#\pi^\et_1(X,x_0)^\ab_\tor$.

\piBound
\begin{proof}
  By \cite[Proposition 69]{Jak}, we have $\pi^\et_1(X,x_0)^\ab_\tor = (\NNS X)_\tor^\dual(k)$. Therefore,
  \[ \# \pi^\et_1(X,x_0)^\ab_\tor = \# (\NNS X)_\tor^\dual(k) \leq \# (\NNS X)_\tor, \]
  and the bound follows from \Cref{thm:NNS bound}.
\end{proof}

\section{Lefschetz Hyperplane Theorem}\label{sec:Lefschetz}
Throughout the section, $X \hookrightarrow \mathbb{P}^r$ is a smooth connected projective variety satisfying $\dim X \geq 2$, and $H$ is a smooth hyperplane section of $X$. In this section, we discuss the Lefschetz hyperplane theorem for $\pic^\tau X$.

If $k = \mathbb{C}$, the exponential sequence gives a short exact sequence
\[ 0 \longrightarrow \frac{H^1(X,\mathbb{Z})\otimes_{\mathbb{Z}}\mathbb{R}}{H^1(X,\mathbb{Z})}
  \longrightarrow \pic^\tau X \longrightarrow H^2(X,\mathbb{Z})_\tor \longrightarrow 0. \]
With the natural analytic topology, the Pontryagin duality gives an exact sequence
\[ 0 \longrightarrow H_1(X,\mathbb{Z})_\tor
  \longrightarrow \Hom_\cont(\pic^\tau X,\mathbb{R}/\mathbb{Z})
  \longrightarrow \frac{H_1(X,\mathbb{Z})}{H_1(X,\mathbb{Z})_\tor} \longrightarrow 0. \]
  The Lefschetz hyperplane theorem implies that the map $H_1(H,\mathbb{Z}) \rightarrow H_1(X,\mathbb{Z})$ is surjective. Therefore, the lemma below implies that $\pic^\tau X \rightarrow \pic^\tau H$ is injective.

  \begin{lem}
    Let $f\colon A \rightarrow B$ be a surjective morphism between finitely generated abelian groups. Consider the morphism between exact sequences
    \[\begin{tikzcd}
        0 \arrow[r] & A_\tor \arrow[r]\arrow[d,"f_*"] & A' \arrow[r]\arrow[d,"g"]
        & A/A_\tor \arrow[r]\arrow[d,"f_*"] & 0\\
        0 \arrow[r] & B_\tor \arrow[r]                      & B' \arrow[r]
        & B/B_\tor \arrow[r]              & 0
      \end{tikzcd}\]
    where the outer vertical morphisms are induced by $f$. Then $g\colon A' \rightarrow B'$ is surjective.
  \end{lem}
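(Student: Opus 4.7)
The approach is to invoke the five-lemma (or equivalently the snake lemma) on the given commutative diagram with exact rows. If both outer vertical arrows are surjective, then since the flanking outer groups are zero, the five-lemma immediately yields surjectivity of the middle map $g$. The plan therefore splits into two steps: first show that both outer vertical maps are surjective, then conclude via the five-lemma or an explicit diagram chase.

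For the outer maps, the right-hand arrow $\bar f\colon A/A_\tor \to B/B_\tor$ is automatically surjective because $f$ is: a surjection descends to any quotient. The left-hand arrow $f|_{A_\tor}\colon A_\tor \to B_\tor$ requires more care. Given $b \in B_\tor$ of order $n$, one lifts it to some $a \in A$ by surjectivity of $f$, and the challenge is to modify $a$ by an element of $\ker f$ so that it becomes torsion. Concretely, applying the snake lemma to multiplication by $n$ on the short exact sequence $0 \to \ker f \to A \to B \to 0$ produces
\[
0 \to (\ker f)[n] \to A[n] \to B[n] \to (\ker f)/n(\ker f) \to A/nA \to B/nB \to 0,
\]
so that surjectivity of $A[n] \to B[n]$ for every $n$ is equivalent to vanishing of the connecting map $B[n] \to (\ker f)/n(\ker f)$.

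Once both outer vertical maps are surjective, a direct diagram chase finishes the argument: given $b' \in B'$, lift its image in $B/B_\tor$ to $\bar a \in A/A_\tor$ via $\bar f$, then further to $a'_0 \in A'$ via the surjection onto $A/A_\tor$; the difference $g(a'_0) - b'$ lies in the image of $B_\tor \hookrightarrow B'$, and surjectivity of $f|_{A_\tor}$ supplies an element of $A_\tor$ whose image under $g$ corrects $a'_0$ to a preimage of $b'$.

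The main obstacle is establishing surjectivity of $f|_{A_\tor}$; this does not follow from $f$ being surjective alone (for example, $\mathbb{Z}\twoheadrightarrow\mathbb{Z}/2$ induces the zero map between torsion subgroups), so the argument must exploit additional structure. In the paper's intended application the extensions $A'$ and $B'$ should arise functorially from $f$ in a way that forces the required surjectivity on torsion, and this is the substantive content that the clean snake-lemma reduction above isolates.
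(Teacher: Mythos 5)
You have reconstructed exactly the argument the paper intends: the snake lemma applied to the displayed diagram gives the exact sequence
\[ \coker(A_\tor\to B_\tor)\longrightarrow\coker g\longrightarrow\coker(A/A_\tor\to B/B_\tor)\longrightarrow 0, \]
the right-hand term vanishes because $f$ is surjective, and $g$ is surjective provided $A_\tor\to B_\tor$ is. The paper's proof is the single sentence ``this follows from the snake lemma,'' so it silently assumes that last surjectivity --- precisely the point you flagged as not following from the hypotheses. You are right, and the problem is not merely a missing step in your write-up: the lemma is false as stated. Take $A=\mathbb{Z}$, $B=\mathbb{Z}/2$, $f$ the quotient map. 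Then $A_\tor=0$ forces $A'\simeq\mathbb{Z}$, and $B/B_\tor=0$ forces $B'\simeq\mathbb{Z}/2$; both squares commute for \emph{every} homomorphism $g\colon\mathbb{Z}\to\mathbb{Z}/2$, in particular for $g=0$, which is not surjective. The hypotheses constrain $g$ only through the induced maps on $(\cdot)_\tor$ and $(\cdot)/(\cdot)_\tor$, and the snake lemma shows this cannot suffice: $\coker g$ is the quotient of $\coker(A_\tor\to B_\tor)$ by the image of the connecting map $\ker(A/A_\tor\to B/B_\tor)\to\coker(A_\tor\to B_\tor)$, and that connecting map depends on $g$.

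The one place your diagnosis goes astray is the suggested repair. Functoriality of the extensions does \emph{not} force $A_\tor\to B_\tor$ to be surjective in the intended application: there $A=H_1(H,\mathbb{Z})$ and $B=H_1(X,\mathbb{Z})$, and when $\dim X=2$ the hyperplane section $H$ is a curve, so $A$ is torsion-free while $B_\tor$ may be nonzero (e.g.\ $H_1(X,\mathbb{Z})=\mathbb{Z}/2$ for an Enriques surface); the left vertical is then the zero map $0\to B_\tor$. What makes the application work is that $\Hom_\cont(\pic^\tau(\cdot),\mathbb{R}/\mathbb{Z})$ is naturally identified with $H_1(\cdot,\mathbb{Z})$, under which $g$ becomes $f$ itself and is surjective outright --- information the lemma's hypotheses discard. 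A correct statement must therefore tie $g$ to $f$, or else directly assume $B_\tor\subseteq\im g$ (which, given that $f$ is onto, is equivalent to the conclusion); once such a hypothesis is supplied, your concluding diagram chase is fine.
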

  \begin{proof}
    This follows from the snake lemma.
  \end{proof}

  We expect a similar Lefschetz-type theorem for any algebraically closed field $k$.
  Over a general base field, we have the Lefschetz hyperplane theorem for \'etale fundamental groups \cite[XII. Corollaire 3.5]{SGA2}. Hence, for a prime number $\ell \neq \charac k$, $(\pic X)[\ell^\infty] \rightarrow (\pic H)[\ell^\infty]$ is injective by \Cref{thm:pi pic duality}. However, \'etale fundamental groups lack the information of $(\pic X)[p^\infty]$. 
  
\begin{thm}\label{thm:semi Lefschetz}
  Let $X$ be a smooth connected projective variety, and let $H$ be a smooth hyperplane section of $X$. If $\dim X \geq 2$, then the kernel of
  \[ r\colon\Pic^\tau X \rightarrow \Pic^\tau H\]
  is a finite commutative group scheme with a connected Cartier dual. 
\end{thm}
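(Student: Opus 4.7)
Let $K = \ker r$. I would prove finiteness and connectedness of the Cartier dual in two separate steps.

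\emph{Finiteness of $K$.} The classical Lefschetz theorem for Albanese varieties, valid in arbitrary characteristic when $\dim X\geq 2$ and $H$ is a smooth ample divisor, makes $\Alb H \to \Alb X$ surjective. Dualizing this surjection of abelian varieties yields that $(\Pic^0 X)_\red \to (\Pic^0 H)_\red$ has finite kernel, so $K\cap(\Pic^0 X)_\red$ is finite. The image of $K$ in $\Pic^\tau X/(\Pic^0 X)_\red$ lies in the finite commutative group scheme $(\NNS X)_\tor$, bounded in \Cref{thm:NNS bound}. Hence $K$ is an extension of two finite group schemes and is itself finite.

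\emph{Connectedness of $K^\dual$.} Since $k$ is algebraically closed, $K^\dual$ is connected if and only if it is local, if and only if $K^\dual(k)=0$. Let $n_0$ be the exponent of $K$, so $K\subseteq (\Pic^\tau X)[n_0]$ and $r$ induces an injection $(\Pic^\tau X)[n_0]/K\hookrightarrow(\Pic^\tau H)[n_0]$. Cartier-dualizing the short exact sequence
\[ 0\to K \to (\Pic^\tau X)[n_0] \to (\Pic^\tau X)[n_0]/K \to 0 \]
identifies $K^\dual$ with the cokernel of the natural map $(\Pic^\tau H)[n_0]^\dual \to (\Pic^\tau X)[n_0]^\dual$. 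Since taking $k$-points preserves surjections of finite commutative group schemes over $k$,
\[ K^\dual(k) = \coker\!\bigl((\Pic^\tau H)[n_0]^\dual(k) \to (\Pic^\tau X)[n_0]^\dual(k)\bigr). \]
By Antei's \Cref{thm:pi pic duality}, combined with the identification $\piNr(-,x_0)^\ab(k)=\pi^\et_1(-,x_0)^\ab$ from \cite[Lemma~3.1]{EHS}, there are natural surjections $\pi^\et_1(-,x_0)^\ab \twoheadrightarrow (\Pic^\tau(-))[n_0]^\dual(k)$ compatible with pullback along $H\hookrightarrow X$. The Lefschetz hyperplane theorem for $\pi^\et_1$ \cite[XII.~Corollaire~3.5]{SGA2} makes $\pi^\et_1(H,x_0)^\ab \to \pi^\et_1(X,x_0)^\ab$ surjective; a diagram chase then forces $(\Pic^\tau H)[n_0]^\dual(k)\to(\Pic^\tau X)[n_0]^\dual(k)$ to be surjective as well, so $K^\dual(k)=0$.

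The finiteness step is routine once one invokes classical Lefschetz on Albanese. The main obstacle is the second step: one must carefully verify that Antei's identification is natural enough with respect to the closed immersion $H\hookrightarrow X$ so that the Cartier-dualized restriction on $n_0$-torsion really corresponds to the pullback $\pi^\et_1(H,x_0)^\ab \to \pi^\et_1(X,x_0)^\ab$. With that naturality in hand, the Lefschetz hyperplane theorem for $\pi^\et_1$ immediately closes the argument, since the $p$-power infinitesimal parts of $K$ (which the étale Lefschetz could not directly see on $\Pic^\tau X$ itself) end up in the étale part of the Cartier dual $K^\dual$, where $\pi^\et_1$ has jurisdiction.
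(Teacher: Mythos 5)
Your proof is correct, and its second step --- writing $(\ker r)^\dual$ as the cokernel of $(\Pic^\tau H)[n_0]^\dual \to (\Pic^\tau X)[n_0]^\dual$, passing to $k$-points, and combining \Cref{thm:pi pic duality} with \cite[Lemma~3.1]{EHS} and the \'etale Lefschetz theorem \cite[XII. Corollaire~3.5]{SGA2} --- is essentially identical to the paper's argument; the naturality of Antei's duality with respect to $H\hookrightarrow X$, which you rightly flag as the delicate point, is used implicitly in the paper in exactly the same way. Where you genuinely diverge is the finiteness step. The paper is more economical: for $\ell\neq\charac k$ the map $(\pic^\tau X)[\ell^\infty]\to(\pic^\tau H)[\ell^\infty]$ is already known to be injective (again via \Cref{thm:pi pic duality} and SGA2), so $(\ker r)^0_\red$ is an abelian variety with no $\ell$-torsion points and hence trivial. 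Your route --- surjectivity of $\Alb H\to\Alb X$, whose dual $(\Pic^0 X)_\red\to(\Pic^0 H)_\red$ then has finite kernel, plus finiteness of the image of $\ker r$ in $\Pic^\tau X/(\Pic^0 X)_\red=(\NNS X)_\tor$ --- is also valid, but in positive characteristic the Albanese Lefschetz theorem is itself usually deduced from the same \'etale $\pi_1$ Lefschetz you invoke later, so the paper uses one input where you use it twice in different clothing; also, citing \Cref{thm:NNS bound} for the finiteness of $(\NNS X)_\tor$ is unnecessary, since that is part of the basic setup. One small correction to your closing heuristic: the infinitesimal unipotent part of $\ker r$ does not move to the \'etale part of the dual ($\boldsymbol{\alpha}_p^\dual\simeq\boldsymbol{\alpha}_p$ is still connected); what $(\ker r)^\dual(k)=0$ actually excludes is a multiplicative ($\mu_n$-type) piece of $\ker r$, which is consistent with the theorem not claiming that $\ker r$ itself is connected.
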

\begin{proof}
  Let $\ell \neq \charac k$ be a prime number. The connected component $(\ker r)^0_\red$ of $(\ker r)_\red$ is an abelian variety without $\ell$-torsion points. Therefore, $(\ker r)^0_\red = 0$ and $\ker r$ is a finite commutative group scheme.

  Take a base point $x \in H$, and let $M = \#(\ker r)$. Then
  \begin{align*}
    (\ker r)^\dual &= \ker(\Pic^\tau X \rightarrow \Pic^\tau H)^\dual \\
            &= \ker\big((\Pic X)[M] \rightarrow (\Pic H)[M]\big)^\dual \\
            &= \coker\left((\Pic H)[M]^\dual \rightarrow \Pic X)[M]^\dual\right)\\
            &= \coker\left(\frac{\piNr(H,x)^\ab}{M \piNr(H,x)^\ab} \rightarrow
              \frac{\piNr(X,x)^\ab}{M \piNr(X,x)^\ab}\right) \text{ (by \Cref{thm:pi pic duality})}.
  \end{align*}
  We have
  \[\frac{\piNr(X,x)^\ab}{M \piNr(X,x)^\ab}(k) = \frac{\pi^\et_1(X,x)^\ab}{M \pi^\et_1(X,x)^\ab}\]
  by \cite[Lemma~3.1]{EHS}. Moreover,  
  \[ \frac{\pi^\et_1(H,x)^\ab}{M \pi^\et_1(H,x)^\ab} \rightarrow
    \frac{{\pi}^\et_1(X,x)^\ab}{M {\pi}^\et_1(X,x)^\ab} \]
  is surjective by the Lefschetz hyperplane theorem for \'etale fundamental groups \cite[XII. Corollaire~3.5]{SGA2}.
  Thus, $(\ker r)^\dual(k) = 0$, meaning that $(\ker r)^\dual$ is connected.
\end{proof}

One may want to show that $r\colon \Pic^\tau X \rightarrow \Pic^\tau H$ is injective. Unfortunately, the Lefschetz hyperplane theorem for Nori's fundamental group scheme is no longer true \cite[Remark 2.4]{BH}. Nonetheless, we still have the Lefschetz hyperplane theorem in some cases.
  
\begin{thm}\label{thm:very very ample}
  Let $d$ be a sufficiently large integer. Then for any smooth hyperplane section $H'$ of the $d$-uple embedding of $X \hookrightarrow \mathbb{P}^r$, the natural map
  \[ r\colon\Pic^\tau X \rightarrow \Pic^\tau H'\]
  is a closed embedding.
\end{thm}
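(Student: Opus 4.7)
I plan to upgrade Theorem~\ref{thm:semi Lefschetz} from ``$\ker r$ is finite with connected Cartier dual'' to ``$\ker r = 0$'' when $d$ is sufficiently large. Since a monomorphism of group schemes of finite type is a closed immersion, this gives the desired closed embedding. I carry out the upgrade in two stages: kill the tangent space of $\ker r$, then kill its $k$-points.

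\emph{Tangent space.} The tangent map to $r$ at the identity is the restriction
\[ H^1(X, \mathcal{O}_X) \longrightarrow H^1(H', \mathcal{O}_{H'}), \]
whose kernel is a quotient of $H^1(X, \mathcal{O}_X(-d))$ via the long exact sequence of $0 \to \mathcal{O}_X(-d) \to \mathcal{O}_X \to \mathcal{O}_{H'} \to 0$. Using $\dim X \geq 2$, Serre duality identifies this with $H^{\dim X - 1}(X, \omega_X(d))^\dual$, which vanishes by Serre vanishing for $d$ sufficiently large. Hence $T_0(\ker r) = 0$, so $\ker r$ is étale.

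\emph{$k$-points.} Since $(\NS X)_\tor$ is finite and $\Pic^0 X$ is of finite type, $\Pic^\tau X$ is of finite type. The Poincar\'e bundle on $X \times \Pic^\tau X$ is thus a coherent sheaf over a finite-type base, so uniform Serre vanishing produces a threshold $d_0$ with $H^1(X, L(-d)) = 0$ for every $L \in \Pic^\tau X(k)$ and every $d \geq d_0$. Pick any $L \in \ker r(k)$; since $L|_{H'} \simeq \mathcal{O}_{H'}$, the long exact sequence of $0 \to L(-d) \to L \to L|_{H'} \to 0$ gives a surjection $H^0(X, L) \twoheadrightarrow H^0(H', \mathcal{O}_{H'})$. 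The same vanishing applied with $L = \mathcal{O}_X$ identifies $H^0(H', \mathcal{O}_{H'})$ with $H^0(X, \mathcal{O}_X) = k$ (using that $H'$ is non-empty). Thus $L$ admits a nonzero section $s$; as $L$ is numerically trivial, the effective divisor $Z(s)$ satisfies $Z(s) \cdot H^{\dim X - 1} = 0$, forcing $Z(s) = \emptyset$. Hence $s$ trivializes $L$ and $L \simeq \mathcal{O}_X$.

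Combining the two stages, $\ker r$ is étale with no $k$-points, hence trivial over the algebraically closed field $k$. The main obstacle is that for small $d$ the tangent map on $H^1(\mathcal{O})$ can have a nonzero kernel (e.g.\ when Kodaira vanishing fails in positive characteristic), so the infinitesimal part of $\ker r$ may be nontrivial; the key point is that for $d$ large, Serre vanishing unconditionally kills both $H^1(X, \mathcal{O}_X(-d))$ (handling the tangent space) and $H^1(X, L(-d))$ uniformly in $L \in \Pic^\tau X$ (handling the $k$-points).
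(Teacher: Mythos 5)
Your proof is correct, but it is a genuinely different argument from the paper's. The paper deduces the theorem in two lines from a deep external result: by Biswas--Holla, for $d$ sufficiently large the map $\piNr(H',x)\rightarrow\piNr(X,x)$ is faithfully flat, and then the Cartier-duality computation already carried out in the proof of \Cref{thm:semi Lefschetz} shows $(\ker r)^\dual=\coker\bigl(\piNr(H',x)^\ab/M \rightarrow \piNr(X,x)^\ab/M\bigr)=0$. You instead argue directly on the Picard scheme: Serre vanishing kills $H^1(X,\mathcal{O}_X(-d))$, hence the tangent space of $\ker r$, and a uniform vanishing of $H^1(X,L(-d))$ over the finite-type (indeed proper) scheme $\Pic^\tau X$ kills the nontrivial $k$-points, after which the numerical triviality of $L$ forces any nonzero section to be nowhere vanishing. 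This buys self-containedness and makes the source of ``$d$ sufficiently large'' completely explicit (two Serre-vanishing thresholds), at the cost of length; the paper's route is shorter but imports the full strength of the Lefschetz theorem for Nori fundamental group schemes. Two small points to tighten: in the $k$-points step you should pass through Serre duality exactly as in your tangent-space step, since Serre vanishing as usually stated concerns positive twists --- i.e. apply uniform vanishing to $H^{\dim X-1}(X,\omega_X\otimes L^{-1}(d))$ for the bounded family $\omega_X\otimes\mathcal{P}^{-1}$ on $X\times\Pic^\tau X$, using $\dim X\geq 2$ again; and ``no $k$-points'' should read ``no $k$-points other than the identity,'' so that the \'etale group scheme $\ker r$ is $\Spec k$.
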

\begin{proof}
  Let $M = \#(\ker r)$. If d is sufficiently large, then the natural map $\piNr(H',x) \rightarrow \piNr(X,x)$ is faithfully flat by \cite[Theorem~1.1]{BiHo}. In this case,  
\[(\ker r)^\dual = \coker\left(\frac{\piNr(H,x)^\ab}{M \piNr(H,x)^\ab} \rightarrow
              \frac{\piNr(X,x)^\ab}{M \piNr(X,x)^\ab}\right)
            = 0\]
          as in the proof of \Cref{thm:semi Lefschetz}.
\end{proof}

\begin{thm}[Langer {\cite[Theorem~11.3]{Lan}}]\label{thm:W2 Lef}
  Suppose that $X$ has a lifting to a smooth projective scheme over $W_2(k)$. Then 
  \[ r\colon\Pic^\tau X \rightarrow \Pic^\tau H\]
  is a closed embedding.
\end{thm}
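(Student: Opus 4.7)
The plan is to combine \Cref{thm:semi Lefschetz} with the $W_2(k)$-lifting hypothesis. By \Cref{thm:semi Lefschetz}, the kernel $K \coloneqq \ker r$ is already known to be a finite commutative group scheme with connected Cartier dual, so one only needs to show $K = 0$. The strategy is first to force $K$ to be étale via Kodaira-type vanishing on the $W_2(k)$-lift, then to combine this with the connectedness of $K^\vee$ to isolate $K$ as an étale $p$-group, and finally to kill this residual étale $p$-part via a Lefschetz statement for an appropriate fundamental group scheme.

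First, I would invoke Deligne--Illusie: since $X$ lifts smoothly to $W_2(k)$, the Hodge-to-de Rham spectral sequence of $X$ degenerates, yielding Kodaira--Akizuki--Nakano vanishing for ample line bundles. Applied to $\mathcal{O}_X(H)$, this gives $H^1(X,\mathcal{O}_X(-H)) = 0$, since $\dim X \geq 2$. From the short exact sequence
\[ 0 \longrightarrow \mathcal{O}_X(-H) \longrightarrow \mathcal{O}_X \longrightarrow \mathcal{O}_H \longrightarrow 0, \]
the restriction $H^1(X,\mathcal{O}_X) \hookrightarrow H^1(H,\mathcal{O}_H)$ is then injective. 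Because this map is the differential of $r$ at the identity, $K$ has trivial Zariski tangent space at the origin, so $K$ is reduced and hence étale over $k$.

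Next, I would combine ``$K$ étale'' with ``$K^\vee$ connected.'' The standard decomposition of finite commutative group schemes over an algebraically closed field of characteristic $p$ into their étale-étale, étale-local, local-étale, and local-local parts then forces $K$ to be of purely étale-local type: $K \simeq \bigoplus_i \underline{\mathbb{Z}/p^{a_i}\mathbb{Z}}$. Indeed, no prime-to-$p$ étale-étale part can appear because $K^\vee$ is connected, and no multiplicative or local-local part can appear because $K$ is étale.

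The hard part will be killing this residual étale $p$-group. Elements of $K(k)$ correspond to line bundles $L$ on $X$ with $L^{p^n} \simeq \mathcal{O}_X$ and $L|_H \simeq \mathcal{O}_H$, equivalently nontrivial $\mu_{p^n}$-torsors on $X$ that become trivial on $H$. These are infinitesimal objects, so the Lefschetz theorem for $\pi_1^{\et}$ in \cite[XII, Corollaire~3.5]{SGA2} used in \Cref{thm:semi Lefschetz} cannot directly detect them. I would instead follow Langer and use the $W_2(k)$-lift to set up a Frobenius-compatible comparison of $F$-divided sheaves on $X$ and $H$ (in the spirit of \cite{BiHo}), producing a surjection $\piNr(H,x)^\ab \twoheadrightarrow \piNr(X,x)^\ab$ of abelianized Nori fundamental group schemes. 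Cartier dualizing via \Cref{thm:pi pic duality}, the kernel $K$ dualizes to a quotient of $\piNr(X,x)^\ab$ killed by the pullback to $H$, so this surjectivity forces $K = 0$.
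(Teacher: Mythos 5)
First, note that the paper does not actually prove this statement: it is quoted verbatim from Langer \cite[Theorem~11.3]{Lan} and used as a black box, so there is no in-paper proof to compare against. Judged on its own terms, your reconstruction gets the easy parts right. Reducing to $K\coloneqq\ker r$ finite with connected Cartier dual via \Cref{thm:semi Lefschetz} is fine; the Deligne--Illusie argument that a $W_2(k)$-lift gives $H^1(X,\mathcal{O}_X(-H))=0$ (this only needs $\inf(\dim X,p)>1$, which holds since $\dim X\geq 2$), hence triviality of the tangent space of $K$ and hence \'etaleness of $K$, is correct; and the decomposition argument showing that an \'etale $K$ with connected dual must be a constant $p$-group is also correct.

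The genuine gap is your final step. You propose to kill the residual constant $p$-group by producing a faithfully flat map $\piNr(H,x)^\ab \twoheadrightarrow \piNr(X,x)^\ab$ from the $W_2(k)$-lift. But by Cartier duality through \Cref{thm:pi pic duality}, the surjectivity of $\piNr(H,x)^\ab/M \to \piNr(X,x)^\ab/M$ is \emph{equivalent} to the vanishing of $\ker\bigl((\Pic^\tau X)[M]\to(\Pic^\tau H)[M]\bigr)$, i.e.\ to the statement you are trying to prove; and the paper itself records (\cite[Remark~2.4]{BH}) that this Lefschetz statement for Nori's fundamental group scheme fails without the lifting hypothesis, so it cannot simply be ``set up'' --- it is the whole content of the theorem. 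The step would not fail so much as it begs the question. The fix is elementary and stays inside your Deligne--Illusie framework: apply Kodaira--Akizuki--Nakano vanishing not only to $\mathcal{O}_X(H)$ but to the ample bundle $L^{-1}(H)$ for each $L\in(\ker r)(k)$ (ample because $L$ is numerically trivial). This gives $H^1(X,L(-H))=0$, so $H^0(X,L)\to H^0(H,L|_H)=H^0(H,\mathcal{O}_H)=k$ is surjective; thus $L$ has a nonzero global section, and a numerically trivial line bundle with a section is trivial. Hence $K(k)=0$, which together with your tangent-space vanishing gives $K=0$ (a finite connected group scheme with trivial Lie algebra is trivial by Nakayama), and a proper monomorphism of group schemes is a closed embedding. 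With this replacement your steps 1--3 become largely redundant as well: the single vanishing $H^1(X,\mathscr{L}(-H))=0$ for all numerically trivial $\mathscr{L}$ handles every part of $K$ at once.
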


The author does not know whether or not $\pic^\tau X \rightarrow \pic^\tau H$ is injective in general. However, the author conjectures that $\Pic^\tau X \rightarrow \Pic^\tau H$ can fail to be a closed embedding if $\charac k = p > 0$, because of the failure of the Kodaira vanishing theorem. The argument below is a reformulation of the work in \cite[Section 2]{BiHo} and \cite[Example~10.1]{Lan}.

\begin{defi}
  Let $\boldsymbol{\alpha}_{p^n}$ be the kernel of the $p^n$-power Frobenius endomorphism on $\mathbb{G}_a$.  
\end{defi}
\begin{thm}
  Let $D$ be a smooth ample effective divisor on $X$. Then the natural map
  \[ r(\boldsymbol{\alpha}_p)\colon(\Pic^\tau X)(\boldsymbol{\alpha}_{p}) \rightarrow (\Pic^\tau D)(\boldsymbol{\alpha}_{p})\]
  is injective if and only if $H^1(X,\mathcal{O}_X(-D)) = 0$.
\end{thm}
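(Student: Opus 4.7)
The plan is to rewrite $r(\boldsymbol{\alpha}_p)$ as a map of sheaf cohomology groups on the thickening. Set $A = k[t]/(t^p)$, $X_A = X \times_k \boldsymbol{\alpha}_p$, and $D_A = D \times_k \boldsymbol{\alpha}_p$. After rigidification, $(\Pic^\tau X)(\boldsymbol{\alpha}_p)$ is identified with line bundles on $X_A$ whose restriction to the closed fiber lies in $\Pic^\tau X$, and $r(\boldsymbol{\alpha}_p)$ becomes ordinary restriction to $D_A$. The splitting $\mathcal{O}_{X_A}^\times \simeq \mathcal{O}_X^\times \times (1 + t\mathcal{O}_{X_A})$ decomposes the question into a ``reduced'' part coming from $\Pic(X) \to \Pic(D)$ (controlled by \Cref{thm:semi Lefschetz}) and an ``infinitesimal'' part coming from $H^1(X, 1 + t\mathcal{O}_{X_A}) \to H^1(D, 1 + t\mathcal{O}_{D_A})$; the iff concerns the latter.

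To handle the infinitesimal part I would use the short exact sequence of sheaves on $X$
\[ 1 \to 1 + t\mathcal{O}_{X_A}(-D_A) \to 1 + t\mathcal{O}_{X_A} \to i_\ast(1 + t\mathcal{O}_{D_A}) \to 1, \]
where $i\colon D \hookrightarrow X$ is the inclusion and $\mathcal{O}_{X_A}(-D_A)$ is the ideal sheaf of $D_A$ in $X_A$. Since $D$ is ample and $\dim X \geq 2$, $D$ is connected, so the global sections map $H^0(X, 1 + t\mathcal{O}_{X_A}) \to H^0(D, 1 + t\mathcal{O}_{D_A})$ is just the identity on $1 + tk[t]/(t^p)$. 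The long exact sequence thus identifies the kernel of the infinitesimal part of $r(\boldsymbol{\alpha}_p)$ with $H^1(X, 1 + t\mathcal{O}_{X_A}(-D_A))$.

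It remains to show that $H^1(X, 1 + t\mathcal{O}_{X_A}(-D_A)) = 0$ if and only if $H^1(X, \mathcal{O}_X(-D)) = 0$. Filter $1 + t\mathcal{O}_{X_A}(-D_A)$ by the subsheaves $V^i := 1 + t^i \mathcal{O}_{X_A}(-D_A)$ for $1 \leq i \leq p-1$; since $t^{2i} \in t^{i+1}\mathcal{O}_{X_A}$ for $i \geq 1$, the successive quotients are canonically $V^i/V^{i+1} \simeq \mathcal{O}_X(-D)$ via $1 + t^i a \mapsto a$. Ampleness of $D$ gives $H^0(X, \mathcal{O}_X(-D)) = 0$, so the long exact sequence attached to $0 \to V^{i+1} \to V^i \to \mathcal{O}_X(-D) \to 0$ yields an injection $H^1(V^{i+1}) \hookrightarrow H^1(V^i)$ whose cokernel embeds into $H^1(X, \mathcal{O}_X(-D))$. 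Induction from $V^{p-1} \simeq \mathcal{O}_X(-D)$ shows $H^1(V^1) = 0$ whenever $H^1(X, \mathcal{O}_X(-D)) = 0$, while conversely the embedding $H^1(X, \mathcal{O}_X(-D)) = H^1(V^{p-1}) \hookrightarrow H^1(V^1)$ forces $H^1(X, \mathcal{O}_X(-D)) = 0$ whenever $H^1(V^1) = 0$, giving the iff.

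The main obstacle is correctly setting up the identification of $(\Pic^\tau X)(\boldsymbol{\alpha}_p)$ with line bundles on $X_A$ and isolating the infinitesimal contribution from any reduced contribution; once this bookkeeping is done, the filtration step is a direct mimic of the computations in \cite{BiHo} and \cite[Example~10.1]{Lan}.
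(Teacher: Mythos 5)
Your filtration computation of $H^1(X,1+t\mathcal{O}_{X_A}(-D_A))$ is correct as far as it goes, but the argument has a genuine gap at the point where you declare that ``the iff concerns the latter.'' Under your identification $(\Pic^\tau X)(\boldsymbol{\alpha}_p)=\pic^\tau X\oplus H^1(X,1+t\mathcal{O}_{X_A})$, the kernel of $r(\boldsymbol{\alpha}_p)$ contains the summand $\ker(\pic^\tau X\to\pic^\tau D)$, and nothing you cite kills it: \Cref{thm:semi Lefschetz} only says that $\ker(\Pic^\tau X\to\Pic^\tau D)$ is a finite group scheme with connected Cartier dual (i.e.\ unipotent), which still allows a nontrivial \'etale part such as $\mathbb{Z}/p$, hence nontrivial $k$-points and a fortiori nontrivial $k[t]/(t^p)$-points. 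Whether $\pic^\tau X\to\pic^\tau H$ is always injective is posed as an open question at the end of this very section, so the forward implication ``$H^1(X,\mathcal{O}_X(-D))=0\Rightarrow r(\boldsymbol{\alpha}_p)$ injective'' cannot be completed along your route.

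The underlying issue is the meaning of $(\Pic^\tau X)(\boldsymbol{\alpha}_{p})$. The paper's proof does not take $k[t]/(t^p)$-valued points of the Picard scheme; via \cite[Proposition~3.2]{Ant} it identifies $(\Pic^\tau X)(\boldsymbol{\alpha}_{p^n})$ with $H^1_{\mathrm{fppf}}(X,\boldsymbol{\alpha}_{p^n})=\Hom_{\mathrm{gp}}(\boldsymbol{\alpha}_{p^n},\Pic^\tau X)$, so for $n=1$ the relevant group is $\ker\bigl(F\colon H^1(X,\mathcal{O}_X)\to H^1(X,\mathcal{O}_X)\bigr)$ --- a single copy, and only the Frobenius-killed classes --- rather than your $(p-1)$ copies of all of $H^1(X,\mathcal{O}_X)$. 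With that reading the reduced summand never enters, but a new difficulty appears that your filtration does not address: a nonzero class of $H^1(X,\mathcal{O}_X(-D))$ need not land in $\ker F\subset H^1(X,\mathcal{O}_X)$, so it does not directly produce a nonzero element of $\ker\bigl(H^1_{\mathrm{fppf}}(X,\boldsymbol{\alpha}_p)\to H^1_{\mathrm{fppf}}(D,\boldsymbol{\alpha}_p)\bigr)$. This is exactly why the paper first passes to $\boldsymbol{\alpha}_{p^n}$ with $n$ large enough that $H^1(X,\mathcal{O}_X(-p^nD))=0$ (using $\dim X\geq 2$), produces a nonzero element of $\ker r(\boldsymbol{\alpha}_{p^n})$, and then extracts an $\boldsymbol{\alpha}_p$ inside the image of the resulting homomorphism to descend to $\boldsymbol{\alpha}_p$. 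Either you must switch to the paper's interpretation and supply this Frobenius/Serre-vanishing step, or, if you insist on the functor-of-points reading, you must additionally prove that $\ker(\pic^\tau X\to\pic^\tau D)=0$, which is not known.
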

\begin{proof}
  Suppose that $H^1(X,\mathcal{O}_X(-D)) \neq 0$. Because $\dim X \geq 2$, there is a large integer $n$ such that $H^1(X,\mathcal{O}_X(-p^n D)) = 0$. Let $F_X\colon X \rightarrow X$ be the absolute Frobenius morphism. Then $(F_X^n)^*\mathcal{O}_X(-D) = \mathcal{O}_X(-p^n D)$. Thus, we have the diagram with exact rows and columns as below.
\[\begin{tikzcd}
    && 0\arrow[d] & 0\arrow[d]\\
    && H^1_{\mathrm{fppf}}(X,\boldsymbol{\alpha}_{p^n})\arrow[r,"h"]\arrow[d,"i"]
    & H^1_{\mathrm{fppf}}(D,\boldsymbol{\alpha}_{p^n})\arrow[d]\\
    0 \arrow[r]
    & H^1(X,\mathcal{O}_X(-D))\arrow[r,"f"]\arrow[d,"(F_X^n)^*"]
    & H^1(X,\mathcal{O}_X)\arrow[r,"g"]\arrow[d,"(F_X^n)^*"]
    & H^1(D,\mathcal{O}_D)\\
    0 \arrow[r] & H^1(X,\mathcal{O}_X(-p^n D))\arrow[r] & H^1(X,\mathcal{O}_X)
  \end{tikzcd}\]
Take a nonzero $s \in H^1(X,\mathcal{O}_X(-D))$. Then $(F_X^n)^*(f(s)) = 0$, because $H^1(X,\mathcal{O}_X(-p^n D)) = 0$. Thus, there is a nonzero $t \in H^1_{\mathrm{fppf}}(X,\boldsymbol{\alpha}_{p^n})$ such that $f(s) = i(t)$. On the other hand, $h(t) = 0$, since $g(f(s)) = 0$. Therefore, $h$ is not injective. By \cite[Proposition~3.2]{Ant}, the natural morphism
\[r(\boldsymbol{\alpha}_{p^n}) \colon (\Pic^\tau X)(\boldsymbol{\alpha}_{p^n}) \rightarrow (\Pic^\tau D)(\boldsymbol{\alpha}_{p^n}) \]
is isomorphic to $h$, so is also not injective. Let $\varphi\colon\boldsymbol{\alpha}_{p^n} \rightarrow \Pic^\tau X$ be a nonzero element of $\ker r(\boldsymbol{\alpha}_{p^n})$. Then the image of $\varphi$ is $\boldsymbol{\alpha}_{p^m}$ for some $m > 0$. Furthermore, $\boldsymbol{\alpha}_{p^m}$ has a subgroup isomorphic to $\boldsymbol{\alpha}_p$. Thus, we have the commutative diagram below.
\[\begin{tikzcd}
    &\boldsymbol{\alpha}_{p^n}\arrow[rd,"\varphi"]\arrow[d,two heads]&&\\
    \boldsymbol{\alpha}_{p}\arrow[r,hook]&\boldsymbol{\alpha}_{p^m}\arrow[r,hook]& \Pic^\tau X\arrow[r]& \Pic^\tau D
  \end{tikzcd}\]
The second row gives a nonzero element in the kernel of
\[ r(\boldsymbol{\alpha}_p)\colon(\Pic^\tau X)(\boldsymbol{\alpha}_{p}) \rightarrow (\Pic^\tau D)(\boldsymbol{\alpha}_{p}).\]

Now, suppose that $H^1(X,\mathcal{O}_X(-D)) = 0$. Then
$H^1(X,\mathcal{O}_X)\rightarrow H^1(D,\mathcal{O}_D)$ is injective, and so is $H^1_{\mathrm{fppf}}(X,\boldsymbol{\alpha}_{p})\rightarrow H^1_{\mathrm{fppf}}(D,\boldsymbol{\alpha}_{p})$. By \cite[Proposition~3.2]{Ant}, 
\[ r(\boldsymbol{\alpha}_p)\colon(\Pic^\tau X)(\boldsymbol{\alpha}_{p})
  \rightarrow (\Pic^\tau D)(\boldsymbol{\alpha}_{p})\]
is also injective.
\end{proof}

Raynaud \cite{Ray} gave an ample effective divisor $D$ such that $H^1(X,\mathcal{O}_X(-D)) \neq 0$. Lauritzen \cite{Laur2}\cite{Laur} showed that the Kodaira vanishing theorem can fail even if $D$ is very ample. However, the author does not know any example of a very ample divisor $D$ such that $H^1(X,\mathcal{O}_X(-D)) \neq 0$.

Thus we still do not know the answers to the following.

\begin{qst}
  Let $X$ be a smooth connected projective variety of $\dim X \geq 2$, and let $H$ be a smooth hyperplane section. Is the map $\pic^\tau X \rightarrow \pic^\tau H$ always injective? Is the map $\Pic^\tau X \rightarrow \Pic^\tau H$ always a closed embedding?
\end{qst}

\section{The Number of Generators of \texorpdfstring{$(\NS X)_{\tor}$}{(NS X)\_tor}}\label{sec:generator bound}
Let $\dim X \hookrightarrow \mathbb{P}^r$ be a smooth connected projective variety such that $\dim X \geq 1$. The aim of this section is to prove that $(\NS X)_{\tor}$ is generated by at most $(\deg X-1)(\deg X-2)$ elements, if $X$ has a lifting over $W_2(k)$.

In the rest of the section, let $C$ be the intersection of $X$ with $\dim X - 1$ general hyperplanes in $\mathbb{P}^r$. Then $C$ is a smooth connected curve, and the genus $g(C)$ of $C$ is at most $(\deg X-1)(\deg X-2)/2$. Since $\Pic^\tau C = \Jac C$, we have a natural map
\[ r\colon \Pic^\tau X \rightarrow \Jac C. \]

\begin{thm}\label{cor:Pic tau embedding}
  Let $C$ be a general curve section. Then $\ker r$ is a finite commutative group scheme with a connected dual.
\end{thm}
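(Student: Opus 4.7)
The plan is to reduce to Theorem~\ref{thm:semi Lefschetz} by iterating it along a flag of smooth hyperplane sections. By Bertini applied inductively, for a general choice of $\dim X - 1$ hyperplanes we may arrange a flag
\[ X = X_0 \supset X_1 \supset \cdots \supset X_{\dim X - 1} = C \]
of smooth connected subvarieties in which each $X_{i+1}$ is a smooth hyperplane section of $X_i$. The map $r$ factors as the composition of the restriction maps $r_i \colon \Pic^\tau X_{i-1} \to \Pic^\tau X_i$, and Theorem~\ref{thm:semi Lefschetz} applies at every stage because $\dim X_{i-1} \geq 2$ whenever $1 \leq i \leq \dim X - 1$.

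For the finiteness of $\ker r$, I would induct on $\dim X$. The base case $\dim X = 1$ is immediate, since then $C = X$ and $r = \id$. For the inductive step, $\ker r$ sits in an exact sequence
\[ 0 \longrightarrow \ker r_1 \longrightarrow \ker r \longrightarrow \ker(r_{\dim X - 1} \circ \cdots \circ r_2), \]
in which $\ker r_1$ is finite by Theorem~\ref{thm:semi Lefschetz} and the rightmost term is finite by the inductive hypothesis applied to $X_1$. An extension of finite group schemes being finite, $\ker r$ is finite.

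To see that $(\ker r)^\dual$ is connected, it suffices to show $(\ker r)^\dual(k) = 0$. Setting $M = \#(\ker r)$ and repeating verbatim the chain of identifications in the proof of Theorem~\ref{thm:semi Lefschetz} with $H$ replaced by $C$, one obtains
\[ (\ker r)^\dual = \coker\left(\frac{\piNr(C,x)^\ab}{M \piNr(C,x)^\ab} \longrightarrow \frac{\piNr(X,x)^\ab}{M \piNr(X,x)^\ab}\right). \]
Evaluating at $k$ and applying \cite[Lemma~3.1]{EHS} identifies this with the cokernel of $\pi^\et_1(C,x)^\ab / M \to \pi^\et_1(X,x)^\ab / M$, which vanishes because the composition $\pi^\et_1(C,x) \to \pi^\et_1(X,x)$ is surjective by iteratively applying the Lefschetz hyperplane theorem \cite[XII. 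Corollaire~3.5]{SGA2} at each step of the flag.

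The main subtlety lies in the bookkeeping surrounding the flag: every intermediate member must be smooth, connected, and of dimension at least one so that Bertini and the \'etale Lefschetz theorem apply at each stage, and this is precisely what the genericity of the curve section $C$ ensures.
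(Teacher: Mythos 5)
Your proof is correct and follows essentially the same route as the paper, which simply iterates Theorem~\ref{thm:semi Lefschetz} along a flag of smooth hyperplane sections down to the curve $C$. The only difference is packaging: the paper deduces that $\ker r$ is finite with connected Cartier dual formally, from the fact that this class of group schemes is closed under subobjects and extensions, whereas you re-run the Cartier-dual computation of Theorem~\ref{thm:semi Lefschetz} directly for the composite map $r$; both are valid, and your version makes explicit the bookkeeping that the paper's one-line proof leaves implicit.
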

\begin{proof}
  By \Cref{thm:semi Lefschetz} applied repeatedly, $r$ is a composition of homomorphisms with finite connected kernel, so $\ker r$ also is finite and connected.
\end{proof}

\begin{lem}\label{lem:w2 lef}
  If $X\hookrightarrow\mathbb{P}^r$ is the reduction of a smooth connected projective scheme $\mathcal{X}\hookrightarrow\mathbb{P}_{W_2(k)}^r$ over $W_2(k)$, then $\ker r = 0$.
\end{lem}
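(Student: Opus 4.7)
The plan is to iterate Langer's Lefschetz theorem (Theorem~\ref{thm:W2 Lef}) along a chain of smooth hyperplane sections, each admitting a $W_2(k)$-lifting inherited from $\mathcal{X}$. Concretely, I will argue by induction on $\dim X - \dim C = \dim X - 1$. The base case $\dim X = 1$ is trivial since then $C = X$ and $r$ is an isomorphism. For the inductive step, I will produce a smooth hyperplane section $H$ of $X$ such that $H$ itself lifts to a smooth projective scheme $\mathcal{H}^\sharp = \mathcal{H}\cap\mathcal{X}$ over $W_2(k)$; then Theorem~\ref{thm:W2 Lef} gives that $\Pic^\tau X \to \Pic^\tau H$ is a closed embedding, and the inductive hypothesis applied to $H$ and the curve section $C\subset H$ gives that $\Pic^\tau H \to \Jac C$ is also a closed embedding. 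Composing yields that $r$ is a closed embedding, hence $\ker r = 0$.

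To carry out the inductive step, first apply Bertini over $k$ to pick a hyperplane $H_0 \subset \mathbb{P}^r_k$ such that $H = H_0 \cap X$ is smooth and connected, and moreover so that $H$ together with $\dim X - 2$ further general hyperplanes cut out the same curve $C$. Lift the linear form defining $H_0$ coefficient-by-coefficient to a linear form in $W_2(k)[x_0,\dots,x_r]$; this defines a hyperplane $\mathcal{H}\subset\mathbb{P}^r_{W_2(k)}$ whose reduction is $H_0$. I then claim $\mathcal{H}\cap \mathcal{X}$ is smooth over $W_2(k)$ with special fiber $H$. Flatness over $W_2(k)$ follows from the local criterion of flatness: since the defining linear form is a nonzero divisor on the smooth fiber $X$ (its vanishing locus on $X$ is the smooth divisor $H$), it is a nonzero divisor on the flat $W_2(k)$-scheme $\mathcal{X}$, and the quotient is $W_2(k)$-flat. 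Smoothness then follows from smoothness of the special fiber $H$ together with flatness, via the fiberwise criterion for smoothness.

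Thus $H$ admits the required $W_2(k)$-lifting, so Theorem~\ref{thm:W2 Lef} applies and yields a closed embedding $\Pic^\tau X \hookrightarrow \Pic^\tau H$. By the induction hypothesis applied to $(\mathcal{H}\cap\mathcal{X},H)$, the map $\Pic^\tau H \to \Jac C$ (which uses the remaining $\dim X - 2$ hyperplanes) is a closed embedding. Composition of closed embeddings is a closed embedding, so $r$ is a closed embedding and its kernel is trivial.

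The main technical obstacle is the lifting step: one must simultaneously guarantee Bertini-type smoothness over $k$ and flatness of the intersection over $W_2(k)$. Both ingredients are standard, but their compatibility is what makes the induction run. In particular, one must be careful that the hyperplane chosen by Bertini (a generic condition on $\mathbb{P}^r_k$) can be lifted without spoiling smoothness of the total space $\mathcal{H}\cap\mathcal{X}$; the local flatness criterion, combined with the fact that a linear form that is a nonzero divisor modulo $p$ remains a nonzero divisor upstairs, dispatches this issue.
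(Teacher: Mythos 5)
Your proof is correct and follows the same route as the paper: choose a general hyperplane, lift its linear form to $W_2(k)$, check that the intersection with $\mathcal{X}$ is a smooth $W_2(k)$-lifting of the hyperplane section, and iterate Langer's theorem (\Cref{thm:W2 Lef}) down to the curve. You simply supply more detail (the nonzerodivisor/flatness and fiberwise-smoothness checks) than the paper, which asserts these facts without proof.
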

\begin{proof}
  Let $V\hookrightarrow\mathbb{P}^r$ be a general hyperplane, and let $H = X \cap V$. Let $\mathcal{V}\hookrightarrow\mathbb{P}_{W_2(k)}^r$ be a lifting of $V$. Then $\mathcal{V}\cap\mathcal{X}$ is a smooth projective scheme, and $H$ is its reduction. Therefore, if $\dim X \geq 1$, then the hypothesis on $X$ is inherited by a general hyperplane section $H$. We may now apply \Cref{thm:W2 Lef} iteratively to obtain the result.
\end{proof}

\begin{thm}\label{thm:Lef Gen Bound}
  If $\ker r$ is connected, then $(\NS X)_{\tor}$ is generated by $(\deg X-1)(\deg X-2)$ elements.
\end{thm}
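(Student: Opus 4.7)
The plan is to convert the scheme-theoretic hypothesis into an injection on $k$-points, then bound $\dim_{\mathbb{F}_\ell}(\NS X)_\tor[\ell]$ one prime $\ell$ at a time by embedding $\pic^\tau X[\ell]$ into $(\Jac C)[\ell](k)$, and finally invoke the elementary fact that the minimum number of generators of a finite abelian group $G$ equals $\max_{\ell}\dim_{\mathbb{F}_\ell}G[\ell]$.

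First I would observe that since $\ker r$ is a finite connected commutative group scheme over the algebraically closed field $k$, its \'etale quotient vanishes and hence $(\ker r)(k) = 0$. Consequently, the induced homomorphism of abstract abelian groups
\[ r(k)\colon \pic^\tau X \hookrightarrow \pic^0 C \]
is injective. This is the only place the hypothesis of the theorem is used, and it is the sole step specific to the present setting; everything afterward is essentially formal.

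Next I would apply the snake lemma to multiplication by $\ell$ on the exact sequence
\[ 0 \to \pic^0 X \to \pic^\tau X \to (\NS X)_\tor \to 0. \]
Because $\pic^0 X$ is the group of $k$-points of an abelian variety over the algebraically closed field $k$, it is divisible, so $\pic^0 X/\ell\,\pic^0 X = 0$ and the four-term snake sequence collapses to $(\NS X)_\tor[\ell] \simeq \pic^\tau X[\ell]/\pic^0 X[\ell]$. Combined with the injection from the first step, this yields
\[ \dim_{\mathbb{F}_\ell}(\NS X)_\tor[\ell] \leq \dim_{\mathbb{F}_\ell}\pic^\tau X[\ell] \leq \dim_{\mathbb{F}_\ell}(\Jac C)[\ell](k). \]

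Finally, the standard torsion description for a Jacobian of a smooth projective curve gives $\dim_{\mathbb{F}_\ell}(\Jac C)[\ell](k) \leq 2g(C)$ for $\ell \neq \charac k$ and $\leq g(C)$ for $\ell = \charac k$. Together with $g(C) \leq (\deg X - 1)(\deg X - 2)/2$, this gives $\dim_{\mathbb{F}_\ell}(\NS X)_\tor[\ell] \leq (\deg X - 1)(\deg X - 2)$ for every prime $\ell$, and the theorem follows by taking the maximum over $\ell$. The main obstacle is phrasing the very first step cleanly: one must use that over an algebraically closed field a finite connected commutative group scheme has no nontrivial $k$-points, which is exactly where the connectedness hypothesis on $\ker r$ is essential (a merely connected Cartier dual would not suffice, since it would leave open the possibility of nontrivial \'etale $p$-primary part in $\ker r$).
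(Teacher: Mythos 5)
Your proof is correct and follows essentially the same route as the paper: both use connectedness of $\ker r$ to get injectivity of $\pic^\tau X \to (\Jac C)(k)$ and then bound the number of generators by $2g(C) \leq (\deg X-1)(\deg X-2)$. The only difference is bookkeeping — you count $\dim_{\mathbb{F}_\ell}$ of the $\ell$-torsion prime by prime via the snake lemma, while the paper realizes $(\NS X)_\tor$ directly as a subquotient of $(\Jac C)(k)[N]$ for $N = \#(\NS X)_\tor$.
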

\begin{proof}
  Let $N = \# (\NS X)_{\tor}$. Then $(\NS X)_{\tor}$ is a quotient group of $(\pic X)[N]$. Since $\ker r$ is connected,
  \[ \pic^\tau X \rightarrow (\Jac C)(k)\]
  is injective. Thus,
  \[ (\pic^\tau X)[N] \rightarrow (\Jac C)(k)[N]\]
  is also injective.

  The group $(\Jac C)(k)[N]$ is generated by $2 g(C)$ elements, and $2 g(C) \leq (\deg X-1)(\deg X-2)$. Thus, its subquotient $(\NS X)_{\tor}$ is also generated by $\leq (\deg X-1)(\deg X-2)$ elements.
\end{proof}

\NSGenLifting
\begin{proof}
  This follows from \Cref{lem:w2 lef} and \Cref{thm:Lef Gen Bound}.
\end{proof}

Furthermore, the $p$-power torsion subgroups have smaller upper bounds.

\begin{thm}
  Suppose that $\charac k = p > 0$. Then $(\NNS X)[p^\infty]^\dual(k)$ is generated by at most $(\deg X - 1)(\deg X - 2)/2$ elements. If $\ker r$ is connected, then $(\NS X)[p^\infty]$ is also generated by at most $(\deg X - 1)(\deg X - 2)/2$ elements.
\end{thm}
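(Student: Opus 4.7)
The plan is to deduce both bounds from the Lefschetz-type comparisons between $X$ and its general curve section $C$ already appearing in this section, sharpening the factor of $2$ by exploiting that the $p$-torsion of an abelian variety of dimension $g$ has rank at most $g$ rather than $2g$.

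For the first assertion, since $(\NNS X)_\tor$ is a finite commutative group scheme, Cartier duality commutes with the $p$-primary decomposition, so
\[ (\NNS X)[p^\infty]^\dual(k) = (\NNS X)_\tor^\dual(k)[p^\infty] = \pi^\et_1(X,x_0)^\ab[p^\infty] \]
by \cite[Proposition 69]{Jak}. The Lefschetz hyperplane theorem for \'etale fundamental groups \cite[XII. Corollaire 3.5]{SGA2}, applied once for each of the $\dim X - 1$ general hyperplanes cutting $C$ out of $X$, gives a surjection $\pi^\et_1(C,x_0) \twoheadrightarrow \pi^\et_1(X,x_0)$; taking abelianizations and then pro-$p$ parts yields a continuous surjection of $\mathbb{Z}_p$-modules
\[ T_p(\Jac C) \twoheadrightarrow \pi^\et_1(X,x_0)^\ab(p), \]
where $(p)$ denotes the pro-$p$ component and $T_p$ the $p$-adic Tate module. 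Since $T_p(\Jac C) \simeq \mathbb{Z}_p^f$ with $f$ the $p$-rank of $\Jac C$, and $f \leq g(C) \leq (\deg X - 1)(\deg X - 2)/2$, the target is a finitely generated $\mathbb{Z}_p$-module of rank at most $f$. By the $\mathbb{Z}_p$-module structure theorem, its torsion subgroup $\pi^\et_1(X,x_0)^\ab[p^\infty]$ is generated by at most $f$ elements, giving the bound.

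For the second assertion, connectedness of the finite group scheme $\ker r$ over the algebraically closed field $k$ forces $(\ker r)(k) = 0$, so
\[ \pic^\tau X \hookrightarrow (\Jac C)(k) \]
is injective. Restricting to $p$-primary torsion gives $(\pic^\tau X)[p^\infty] \hookrightarrow (\Jac C)(k)[p^\infty] \simeq (\mathbb{Q}_p/\mathbb{Z}_p)^f$ with $f \leq g(C)$. As in the proof of \Cref{thm:Lef Gen Bound}, the finite group $(\NS X)[p^\infty]$ is a quotient of $(\pic^\tau X)[p^\infty]$: every $p$-power torsion class in $\NS X$ lifts to a $p$-power torsion element of $\pic^\tau X$, using the $p$-divisibility of the abelian variety $(\Pic^0 X)_\red$ on $k$-points. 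Thus $(\NS X)[p^\infty]$ is a subquotient of $(\mathbb{Q}_p/\mathbb{Z}_p)^f$ and hence generated by at most $f \leq (\deg X - 1)(\deg X - 2)/2$ elements.

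The main subtlety is in the first claim: one must translate the SGA2 surjection of profinite abelianizations into a bound on the number of \emph{torsion} generators of the target. This succeeds because $T_p(\Jac C)$ is a finitely generated free $\mathbb{Z}_p$-module, so its continuous quotients remain finitely generated of bounded rank, and their torsion subgroups are controlled accordingly.
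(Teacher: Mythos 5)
Your proof is correct, and the two halves compare differently with the paper's argument. For the second assertion you do essentially what the paper does: connectedness of $\ker r$ gives $\pic^\tau X \hookrightarrow (\Jac C)(k)$, and $(\NS X)[p^\infty]$ is exhibited as a subquotient of $(\Jac C)(k)[p^\infty] \simeq (\mathbb{Q}_p/\mathbb{Z}_p)^f$ with $f \leq g(C)$ (the paper works with $[p^N]$ for $N$ large rather than $[p^\infty]$, an immaterial difference). For the first assertion you take a genuinely different route. The paper stays at the level of group schemes: it invokes \Cref{cor:Pic tau embedding} to see that $\ker r$ has connected Cartier dual, dualizes to get a surjection $(\Jac C)^\dual[p^N](k) \twoheadrightarrow (\Pic X)[p^N]^\dual(k)$, and bounds the right side by the $p$-rank of the dual abelian variety. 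You instead convert $(\NNS X)[p^\infty]^\dual(k)$ into $\pi^\et_1(X,x_0)^\ab[p^\infty]$ via \cite[Proposition 69]{Jak} (equivalently \Cref{cor:pi et NS comparison}) and apply the SGA2 Lefschetz theorem directly, bounding by the $\mathbb{Z}_p$-rank of $T_p(\Jac C)$. Since \Cref{thm:semi Lefschetz} --- the input to \Cref{cor:Pic tau embedding} --- is itself proved from \cite[XII. Corollaire 3.5]{SGA2}, you are in effect inlining that proof; what you gain is that the first bound is seen to depend only on the \'etale Lefschetz theorem and the $p$-rank inequality $f \leq g$, with no need for the connected-dual statement, while the paper's version keeps both halves uniformly phrased in terms of the single map $r\colon \Pic^\tau X \rightarrow \Jac C$ and its Cartier dual. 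Your reduction from the profinite surjection to a bound on torsion generators (a continuous quotient of $\mathbb{Z}_p^f$ is a finitely generated $\mathbb{Z}_p$-module whose torsion needs at most $f$ generators) is sound, as is the lifting of $p$-power torsion classes of $\NS X$ using divisibility of $(\Pic^0 X)_\red(k)$.
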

\begin{proof}
  Take a sufficiently large integer $N$. Let $(\Jac C)^\dual$ be the dual abelian variety of $(\Jac C)$. Since $(\Jac C)^\dual[p^N] = (\Jac C)[p^N]^\dual$, 
  \[ (\Jac C)^\dual[p^N](k) \rightarrow (\Pic X)[p^N]^\dual(k)\]
  is surjective by \Cref{cor:Pic tau embedding}. Because $N$ is large, $(\NNS X)[p^\infty]^\dual$ is a subgroup scheme of $(\Pic X)[p^N]^\dual$. Since $(\Jac C)^\dual(k)[p^N]$ is generated by $\leq(\deg X - 1)(\deg X - 2)/2$ elements, so is $(\NNS X)[p^\infty]^\dual(k)$.

  Now, suppose that $\ker r$ is connected. Then we have an injection
  \[ (\pic^\tau X)[p^N] \rightarrow (\Jac C)(k)[p^N]. \]
  Because $N$ is large, $(\NS X)[p^\infty]$ is a quotient of $(\pic^\tau X)[p^N]$. Since $(\Jac C)(k)[p^N]$ is generated by $\leq(\deg X - 1)(\deg X - 2)/2$ elements, so is $(\NS X)[p^\infty]$.
\end{proof}

\section*{Acknowledgement}
The author thanks his advisor Bjorn Poonen for his careful advice. The author thanks J\'anos Koll\'ar for answering questions regarding Lefschetz-type theorems. The author thanks Barry Mazur for suggesting Nori's fundamental group schemes. The author also thanks Chenyang Xu, Steven Kleiman and Davesh Maulik for many helpful conversations.

\bibliographystyle{plain}
\bibliography{mybib}

\end{document}